\newtheorem{thmA}{Theorem}
\newtheorem{thm}{Theorem}
\newtheorem{prop}[thm]{Proposition}
\newtheorem{lem}[thm]{Lemma}
\newtheorem{definition}[thm]{Definition}
\newtheorem{remark}[thm]{Remark}
\newcommand{\OM}[1]{\Omega^1_{#1}}
\DeclareMathOperator{\tr}{tr}
\def\C{\mathbb C}
\def\be{\boldsymbol{e}}
\def\OM0{\mathbf{\Omega}_0}
\newcommand{\cO}{{\mathcal O}}
\newcommand{\bH}{{\mathbf H}}
\newcommand{\cF}{{\mathcal F}}
\newcommand{\cE}{{\mathcal E}}
\newcommand\lra{\longrightarrow}
\newcommand\Spec{\mathop{\rm Spec}\nolimits}
\title[Canonical coordinates]{Canonical coordinates for moduli spaces of rank two irregular connections on curves.}
\author[A. Komyo, F. Loray, M.-H.\ Saito and Sz. Szab\'o]{Arata Komyo \and Frank Loray \and Masa-Hiko Saito \and Szil\'{a}rd Szab\'{o}}
\address{
Department of Material Science, Graduate School of Science, University of Hyogo, 2167 Shosha,
Himeji, Hyogo 671-2280, Japan}
\email{akomyo@sci.u-hyogo.ac.jp}
\address{Univ Rennes, CNRS, IRMAR - UMR 6625, F-35000 Rennes, France}
\email{frank.loray@univ-rennes1.fr}
\address{Department of Data Science, Faculty of Business Administration, KobeGakuin University, Minatojima, Chuou-ku, Kobe,   650-8586, Japan}
\email{mhsaito@ba.kobegakuin.ac.jp}
\address{Department of Algebra and Geometry, Insitute of Mathematics, Faculty of Natural Sciences, Budapest University of Technology and Economics, M\H{u}egyetem rkp. 3., H-1111 Budapest, Hungary}
\email{szabosz@math.bme.hu}
\subjclass[2010]{}
\keywords{Moduli space of connections, cyclic vector, canonical coordinates}
\thanks{The first author is supported by JSPS KAKENHI: 
Grant Numbers JP17H06127 and JP19K14506.
The second author is supported by CNRS and  Centre Henri Lebesgue, program ANR-11-LABX-0020-0.
The third author is supported by JSPS KAKENH:Grant Number 17H06127, 22H00094 and 22K18669.  
The fourth author was supported by \emph{Lend\"ulet} Low Dimensional Topology grant of the Hungarian Academy of 
 Sciences and by the grants K120697 and KKP126683 of NKFIH}
\date{\today}
\numberwithin{equation}{section}
\begin{document}

\maketitle

\begin{abstract}
In this paper, we study a geometric counterpart of the cyclic vector which 
allow us to put a rank 2 meromorphic connection on a curve into a ``companion'' normal form.
This allow us to naturally identify an open set of the moduli space of $\mathrm{GL}_2$-connections
(with fixed generic spectral data, i.e. unramified, non resonant) with some Hilbert scheme of points on the twisted cotangent bundle 
of the curve. We prove that this map is symplectic, therefore providing Darboux (or canonical) coordinates 
on the moduli space, i.e. separation of variables. On the other hand, for $\mathrm{SL}_2$-connections,
we give an explicit formula for the symplectic structure for a birational model given by Matsumoto.
We finally detail the case of an elliptic curve with a divisor of degree $2$. 
\end{abstract}

\section{Introduction}

In this paper, we introduce coordinates on the moduli spaces of 
rank 2 meromorphic connections on a Riemann surface, 
and we describe the symplectic structures on the moduli spaces 
by the introduced coordinates. 
Finally, we will have canonical coordinates on the moduli spaces.
Our motivation is to give explicit descriptions of 
the isomonodromic deformations of meromorphic connections over a general Riemann surface.
It is well-know that the isomonodromic deformations have 
non-autonomous Hamiltonian descriptions 
(in detail, see \cite{Krich02}, \cite{Hurt08}, \cite{Fedorov}, for example).
If we find explicit formulae for the isomonodromic Hamiltonians,
then we have explicit descriptions of isomonodromic deformations.
To find explicit formulae of Hamiltonians, 
it is necessary to
introduce canonical coordinates (which are also called Darboux coordinates)
on the moduli space of meromorphic connections.
The present paper is a first step to give explicit descriptions of 
isomonodromic deformations.

For the isomonodromic deformations of rank 2 projective connections
with regular singular points,
there are some results of explicit descriptions.
For example, 
Okamoto
considered non-autonomous Hamiltonian descriptions 
of isomonodromic deformations  
on elliptic curves
in \cite{Oka86} and \cite{Oka95}.
Iwasaki generalized
for general Riemann surfaces in \cite{Iwa91} and \cite{Iwa92}.
Here the independent variables of the isomonodromic deformations are 
the position of regular singular points on the Riemann surfaces.
That is, they are isomonodromic deformations of fixed Riemann surfaces.
On the other hand, Kawai \cite{Kawa03} gave explicit descriptions
of the isomonodromic Hamiltonians varying the elliptic curve.
Okamoto, Iwasaki, and Kawai in these papers
introduced canonical coordinates
on (a generic part of) the moduli space of rank 2 meromorphic projective connections
by using apparent singularities.
For our purpose, we take this strategy. 
That is, we will also introduce canonical coordinates
on the moduli space of meromorphic connections
by using apparent singularities.
On the other hand, in this paper, we are interested in 
the isomonodromic deformations of $\mathrm{GL}_2$-connections 
and 
of $\mathrm{SL}_2$-connections.
The coordinates using apparent singularities 
are an analog of 
the separation of variables in the Hitchin system,
which is a birational map from the moduli space of stable Higgs bundles
to the Hilbert scheme of points on the cotangent bundle over the underlying curve
of the Higgs bundles (see \cite{Hurt} and \cite{GNR}). 
This approach has been generalized to Higgs bundles with unramified irregular singularities~\cite[Section~8.3]{Kon-Soi},~\cite{Szabo_BNR},~\cite[Section~4.2]{Kon-Ode}. 
The moduli spaces of Higgs bundles corresponding to the Painlev\'e cases were analyzed from this perspective in~\cite{ISS1},~\cite{ISS2},~\cite{ISS3}. 
Here this map is a symplectomorphism of the open dense subsets of the moduli space.
The definition of the apparent singularities for general rank meromorphic connections
will be given in \cite{SS}.

\subsection{Our setting}
Let $\nu$ be a positive integer.
We set $I:= \{ 1,2,\ldots,\nu\}$.
Let $C$ be a compact Riemann surface of genus $g$ ($g\geq 0$),
and $D= \sum_{i \in I} m_i[t_i]$ be an effective divisor on $C$.
Let $E$ be a vector bundle over $C$
and $\nabla \colon E \rightarrow E \otimes \Omega^1_{C}(D)$
be a meromorphic connection acting on $E$.
We assume that the leading term of 
the expansion of a connection matrix of $\nabla$ at $t_i$
has distinct eigenvalues.
If $m_i=1$, then we assume that 
the difference of eigenvalues of the residue matrix at $t_i$ 
is not integer.
That is, $t_i$ is an generic unramified irregular singular point of $\nabla$
or a non-resonant regular singular point of $\nabla$.

When $C$ is the projective line and $E$ is the trivial bundle, 
the moduli space of meromorphic connections
has been studied by Boalch \cite{Boalch01}
and Hiroe--Yamakawa \cite{HY14}.
This moduli space has the natural symplectic structure 
coming from the symplectic structure on the (extended) coadjoint orbits. 
For general $C$ and $E$, 
the moduli space of meromorphic connections (with quasi-parabolic structures)
has been studied by
Inaba--Iwasaki--Saito \cite{IIS, IIS2},
Inaba \cite{Ina},
and
Inaba--Saito \cite{IS}.
For general $C$ and $E$, 
the moduli space has also the natural symplectic structure. 
In these papers,
the symplectic form described by 
a pairing of the hypercohomologies of some complex.
This description of the symplectic structure is an analog of 
the symplectic structure of the moduli spaces of stable Higgs bundles
due to Bottacin \cite{Bott}.
For the case where $\nabla$ has only regular singular points,
Inaba showed that 
this symplectic structure coincides with the pull-back of the 
Goldman symplectic structure on the character variety 
via the Riemann--Hilbert map
in \cite[the proof of Proposition 7.3]{Ina}.

Our purpose in this paper is to introduce
canonical coordinates on the moduli spaces of meromorphic connections. 
For this purpose, there are some strategies.
First one is to consider canonical coordinates 
on the products of coadjoint orbits.
This direction was studied by 
Jimbo--Miwa--Mori--Sato \cite{JMMS},
Harnad \cite{Harnad94}, and
Woodhouse \cite{Woodhouse07}.
Sakai--Kawakami--Nakamura \cite{KNS18}
and 
Gaiur--Mazzocco--Rubtsov \cite{GMR23}
gave some explicit formulae for the isomonodromic Hamiltonians 
by the coordinates of this direction.
Second one is to consider the apparent singularities.
As mentioned above, 
we take this strategies.

In this paper, 
we consider only the case where the rank of $E$ is two.
Let $X$ be an irregular curve,
which is described in Section \ref{SubSect:SpectralData}.
That is, $X$ is a tuple of (i) a compact Riemann surface $C$, 
(ii) an effective divisor $D$ on $C$, 
(iii) local coordinates around 
the support with $D$, and 
(iv) spectral data of meromorphic connections at 
the support with $D$ (with data of residue parts).
Here, the spectral data is described in Section \ref{SubSect:SpectralData}.
We fix an irregular curve $X$.
That is, we fix spectral data 
of rank 2 meromorphic connections at each point of the support with $D$.
By applying elementary transformations (which is also called Hecke modifications),
we may change the degree of the underlying vector bundle of a meromorphic connection 
freely. 
So we assume that $\deg(E)=2g-1$.
By this condition, the Euler characteristic of the vector bundle $E$ is $1$
by the Riemann--Roch theorem.
In this situation, for generic meromorphic connections $(E,\nabla)$, 
we have $\dim_{\mathbb{C}} H^0(C,E)=1$.
So the global section of $E$ is uniquely determined up to constant.
This is convenient for the definition of the apparent singularities. 
In this paper, 
we consider only meromorphic connections with $\dim_{\mathbb{C}} H^0(C,E)=1$.
Moreover we assume that 
meromorphic connections $(E,\nabla)$ are irreducible. 
By this condition, the definition of apparent singularities becomes simple.

\subsection{$\mathrm{GL}_2$-connections}
In the first part of this paper, we discuss on $\mathrm{GL}_2$-connections.
That is, we consider rank 2 meromorphic connections.
We do not fix the determinant bundles of the underlying vector bundles 
and the traces of connections.
Our purpose is to introduce canonical coordinates 
on the moduli space of rank 2 meromorphic connections
by using apparent singularities. 
When $C$ is the projective line, 
many people introduced canonical coordinates on the moduli space
by using the apparent singularities 
(\cite{Oka86}, \cite{Obl05}, \cite{DM07}, \cite{Szabo13}, \cite{KS19},
\cite{DiarraLoray}, and \cite{Kom}).
In this paper, we consider apparent singularities for general Riemann surfaces.

Let $X$ be the fixed irregular curve.
If $(E,\nabla)$ is a rank 2 meromorphic connection 
such that $\deg(E) =2g-1$, $\dim_{\mathbb{C}} H^0(C,E)=1$, 
and $(E,\nabla)$ is irreducible,
then 
we can define apparent singularities for $(E,\nabla)$.
(In detail, see Definition \ref{2023_8_22_13_40} below).
The apparent singularities are the set of points 
$\{q_1,\ldots,q_N\}$ on the underlying curve $C$.
Here we set $N := 4g-3+\deg(D)$.
Let $M_X$ be the following moduli space 
$$
M_X := 
\left. \left\{ 
(E,\nabla )
\ \middle| \ 
\begin{array}{l}
\text{(i) $E$ is a rank 2 vector bundle on $C$ with $\deg(E)=2g-1$}\\
\text{(ii) $\nabla \colon E \rightarrow E \otimes \Omega^1_C(D)$ is a connection} \\
\text{(iii) $(E,\nabla)$ is irreducible, and}\\
\text{(iv) $\nabla$ has the fixed spectral data in $X$}
\end{array}
\right\} \right/  \cong.
$$
This moduli space $M_X$ has 
a natural symplectic structure 
due to Inaba--Iwasaki--Saito \cite{IIS}, Inaba \cite{Ina}, and Inaba--Saito \cite{IS}.
We consider a Zariski open subset $M_X^0$ of $M_X$ as follows:
$$
M^0_X
:= \left. \left\{ 
(E,\nabla ) \in M_X
\ \middle| \ 
\begin{array}{l}
\text{(i) $\dim_{\mathbb{C}} H^0(C,E) =1$}, \\
\text{(ii) $q_1+\cdots+q_{N}$ is reduced, and} \\
\text{(iii) $q_1+\cdots+q_{N}$ has disjoint support with $D$}
\end{array}
\right\} \right/  \cong
$$
(in detail, see Section \ref{subsect_moduli}).
The dimension of the moduli space $M^0_X$ is $2N$ (Proposition~\ref{prop:dimension}).
By taking apparent singularities, we have a map
$$
\begin{aligned}\label{eq:App}
\operatorname{App}\colon M^0_X &\longrightarrow  \mathrm{Sym}^N(C) \\
(E,\nabla) &\longmapsto \{ q_1,q_2 , \ldots ,q_N \} .
\end{aligned}
$$
Remark that 
the dimension of $\mathrm{Sym}^N(C)$
is half of the dimension of $M^0_X$.
To introduce coordinates on $M^0_X$,
it is necessary to find further invariants of connections, that are customarily called \emph{accessory parameters}. 
To find these parameters, we introduce a twist of $\Omega^1_C(D)$
by $c_d$, which is the first Chern class $c_1(\det (E)) \in H^1(C,\Omega^1_C)$
of $E$.
(In detail, Section \ref{subsect:Canonical_Coor} below).
We denote by $\Omega_C^1(D,c_d)$ the twist of $\Omega^1_C(D)$.
Let 
$$
\pi_{c_d} \colon \boldsymbol{\Omega}(D,c_d) \longrightarrow C
$$
the total space of $\Omega^1_C(D,c_d)$.
Let $\omega_{D,c_d}$ be the rational 2-form on $\boldsymbol{\Omega}(D,c_d)$
induced by the Liouville symplectic form.
This rational 2-form $\omega_{D,c_d}$ 
induces a symplectic structure on 
$\boldsymbol{\Omega}(D,c_d) \setminus \pi_{c_d}^{-1}(D)$.
We consider the symmetric product $\mathrm{Sym}^N(\boldsymbol{\Omega}(D,c_d))$.
Let $\sum_{j=1}^N \mathrm{pr}_j^* (\omega_{D,c_d})$ be the rational 2-form 
on the product $\boldsymbol{\Omega}(D,c_d)^N$.
Here $\mathrm{pr}_j \colon \boldsymbol{\Omega}(D,c_d)^N 
\rightarrow \boldsymbol{\Omega}(D,c_d)$ is the $j$-th projection.
This rational 2-form $\sum_{j=1}^N \mathrm{pr}_j^*( \omega_{D,c_d})$
induces a symplectic structure on a generic part of 
$\mathrm{Sym}^N(\boldsymbol{\Omega}(D,c_d))$.
We will define a map from $M^0_X$ 
to $\mathrm{Sym}^N(\boldsymbol{\Omega}(D,c_d))$
by the following idea.

By the theory of apparent singularities discussed in Section \ref{SubSect:AppGL2},
we have a canonical inclusion morphism
$$
\mathcal{O}_C \oplus (\Omega^1_C(D))^{-1}
\longrightarrow E.
$$
By this morphism, 
we have the connection $\nabla_0$ on $\mathcal{O}_C \oplus (\Omega^1_C(D))^{-1}$
induced by a connection $\nabla$ on $E$.
Notice that $\nabla_0$ has simple poles at the apparent singularities.
By applying automorphisms on $\mathcal{O}_C \oplus (\Omega^1_C(D))^{-1}$,
we may normalize $\nabla_0$ as
$$
\nabla_0= 
\begin{pmatrix}
\operatorname{d} & \beta \\ 
1 & \delta
\end{pmatrix},
$$
which is called a companion normal form (in detail, see Section \ref{SubSect:CNFGL2} below).
Here $\operatorname{d}$ is the exterior derivative on $C$,
$\beta \in H^0(C, (\Omega^1_C)^{\otimes 2} (2D + q_1+\cdots+q_N))$,
and $\delta$ is a connection on $(\Omega^1_C(D))^{-1}$,
which has poles at the support of $D$ and
the apparent singularities $q_1,\ldots,q_N$.
Then we may define a map
\begin{equation}\label{2023_8_23_8_9}
\begin{aligned}
f_{\mathrm{App}} \colon M^0_X &\longrightarrow 
\mathrm{Sym}^N(\boldsymbol{\Omega}(D,c_d)) \\
(E,\nabla) &\longmapsto \{ 	(q_j, \mathrm{res}_{q_j} (\beta) + \mathrm{tr}(\nabla)|_{q_j} )\}_{
1\leq j \leq N}.
\end{aligned}
\end{equation}
Here, notice that $\mathrm{res}_{q_j} (\beta) \in \Omega^1_C(D)|_{q_j}$
and $\mathrm{tr}(\nabla)|_{q_j}$ is justified by considering 
the twisted cotangent bundle
(in detail, see Definition \ref{2023_7_12_23_06} below).
Remark that 
the dimension of $\mathrm{Sym}^N(\boldsymbol{\Omega}(D,c_d))$
is equal to the dimension of $M^0_X$.
A generic part of
$\mathrm{Sym}^N(\boldsymbol{\Omega}(D,c_d))$
has the natural symplectic structure induced by 
the symplectic structure on
the product $(\boldsymbol{\Omega}(D,c_d) \setminus \pi_{c_d}^{-1}(D))
 \times \cdots
\times 
(\boldsymbol{\Omega}(D,c_d)\setminus \pi_{c_d}^{-1}(D))$.
The first main theorem is the following:
\begin{thmA}[Theorem \ref{2023_8_22_12_09} below]\label{TheoremA}
The pull-back of the symplectic form on a generic part of 
$\mathrm{Sym}^N(\boldsymbol{\Omega}(D,c_d))$
under the map \eqref{2023_8_23_8_9}
coincides with the symplectic form on $M^0_X$.
\end{thmA}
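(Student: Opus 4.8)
The plan is to establish the pointwise identity
\[
f_{\mathrm{App}}^*\Bigl(\sum_{j=1}^N \mathrm{pr}_j^*\,\omega_{D,c_d}\Bigr)=\omega_{M^0_X}
\]
on the dense open locus of $M^0_X$ where the apparent singularities $q_1,\dots,q_N$ are distinct and disjoint from $D$, by comparing the two closed $2$-forms on the $2N$-dimensional tangent space $T_{(E,\nabla)}M^0_X$ at each such point. Since both sides carry symplectic forms and $f_{\mathrm{App}}$ is a morphism between spaces of equal dimension $2N$, it suffices to match the forms on tangent vectors; this turns the theorem into a computation in infinitesimal deformation theory.

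First I would identify $T_{(E,\nabla)}M^0_X$ with the first hypercohomology $\mathbb{H}^1$ of the deformation complex $\mathrm{End}(E)\xrightarrow{\nabla}\mathrm{End}(E)\otimes\Omega^1_C(D)$ and recall, following Inaba--Iwasaki--Saito and Inaba, that $\omega_{M^0_X}$ is the pairing $\mathbb{H}^1\times\mathbb{H}^1\to\mathbb{C}$ given by cup product followed by the trace on $\mathrm{End}(E)$ and the canonical residue/trace map to $\mathbb{C}$. Using the companion normal form, I would represent each tangent vector by an infinitesimal variation of the connection matrix $\begin{pmatrix}\mathrm{d}&\beta\\1&\delta\end{pmatrix}$, i.e.\ by a pair $(\dot\beta,\dot\delta)$ taken modulo the infinitesimal gauge transformations of $\mathcal{O}_C\oplus(\Omega^1_C(D))^{-1}$ that preserve the companion shape. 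The differential of $f_{\mathrm{App}}$ then reads off as the induced variation $\dot q_j$ of the position of each apparent singularity together with the variation $\dot p_j$ of the fibre coordinate $p_j=\mathrm{res}_{q_j}(\beta)+\mathrm{tr}(\nabla)|_{q_j}$; because $\omega_{D,c_d}$ is the twisted Liouville form, the right-hand side pulls back to $\sum_{j=1}^N \dot p_j\wedge\dot q_j$.

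The heart of the argument is to compute the left-hand side and to show that the \emph{global} hypercohomology pairing localizes as a \emph{sum of residues} at the apparent singularities. Concretely I would choose \v{C}ech or Dolbeault representatives adapted to the companion form and apply the Serre-duality residue theorem to rewrite $\int_C\mathrm{tr}(\xi\smile\eta)$ as $\sum_{j}\mathrm{Res}_{q_j}$ of an explicitly constructed meromorphic $1$-form built from $(\dot\beta,\dot\delta)$, checking that the a priori contributions along the polar divisor $D$ cancel --- consistently with both symplectic structures being defined away from $\pi_{c_d}^{-1}(D)$. Matching these residues with $\dot p_j\wedge\dot q_j$ finishes the identification. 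The twist by $c_d=c_1(\det E)$ and the trace correction $\mathrm{tr}(\nabla)|_{q_j}$ enter precisely so that $p_j$ transforms as a point of the fibre of $\Omega^1_C(D,c_d)$ over $q_j$, making the Liouville form, and hence the pairing, well defined and independent of the local coordinate.

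The hard part will be this localization step: converting the abstract cup-product form into the explicit sum of local residues \emph{with the correct sign and normalization}, so that it equals $\sum_j \dot p_j\wedge\dot q_j$ on the nose rather than up to an undetermined constant. The delicate points are (i) handling the infinitesimal gauge freedom in the companion normal form so that $(\dot\beta,\dot\delta)$ descends to a well-defined class in $\mathbb{H}^1$; (ii) controlling the behaviour at $D$, where $\nabla$ has a generic unramified irregular or non-resonant regular pole, so that the fixed spectral data impose the correct boundary terms; and (iii) ensuring that the positions $q_j$ genuinely form the ``position'' half of a Darboux pair, which relies on the reducedness and disjointness-from-$D$ hypotheses built into $M^0_X$. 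As a guide and a consistency check, the same residue computation should degenerate, in the associated-graded Higgs limit, to the known separation of variables for the Hitchin system, whose analogous symplectomorphism onto the symmetric product of the cotangent bundle is established in \cite{Hurt} and \cite{GNR}.
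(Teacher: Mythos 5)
Your plan follows essentially the same route as the paper's proof of Theorem \ref{2023_8_22_12_09}: identify tangent vectors with hypercohomology classes of the (quasi-parabolic) deformation complex, represent them by \v{C}ech cocycles adapted to the companion normal form, observe that the degree-two part of the cup product vanishes, and localize the pairing as a sum of residues of an explicit meromorphic coboundary supported at the apparent singularities, with the contributions at $D$ being holomorphic and the $c_d$-twist/trace correction accounting for the determinant line's variation. The "hard part" you flag — the explicit residue computation with correct normalization, including the cubic-pole term $v(q_j)v'(q_j)\operatorname{d}\!z_j/(z_j-q_j)^3$ coming from the determinant cocycle, which has vanishing residue — is exactly what the paper carries out in Section \ref{2023_7_12_16_39} and the proof of Theorem \ref{2023_8_22_12_09}.
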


If we take canonical coordinates on $\boldsymbol{\Omega}(D,c_d)$,
then we have canonical coordinate on $\mathrm{Sym}^N(\boldsymbol{\Omega}(D,c_d))$,
since the symplectic structure on 
$\mathrm{Sym}^N(\boldsymbol{\Omega}(D,c_d))$
is induced by the 2-form $\sum_{j=1}^N \mathrm{pr}_j^*( \omega_{D,c_d})$.
Then we have canonical coordinates on $M^0_X$
by Theorem \ref{TheoremA}. 
Detail of construction of concrete canonical coordinates on $M^0_X$
is discussed 
in the paragraph after the proof of Theorem \ref{2023_8_22_12_09} below.


In Section \ref{Sect:CompForElliptic}, 
we consider an example of this argument. 
We will calculate the canonical coordinates 
for an elliptic curve and a divisor $D$ of length $2$.
The moduli space of rank 2 meromorphic connection with fixed trace connection
on an elliptic curve with two simple poles
was studied in \cite{LR20} and \cite{FL}.
In this paper, we will discuss the $\mathrm{GL}_2$-connection case.

\subsection{$\mathrm{SL}_2$-connections}
In the second part of this paper, we discuss on $\mathrm{SL}_2$-connections.
That is, we consider rank 2 meromorphic connections 
with fixed trace connection $(L_0,\nabla_0)$.
Here $L_0$ is a fixed line bundle on $C$ of degree $2g-1$
and $\nabla_0 \colon L_0 \rightarrow L_0 \otimes \Omega^1_C(D)$
is a fixed connection.
More precisely, we consider 
rank 2 quasi-parabolic connections $(E,\nabla , \{ l^{(i)}\} )$,
defined in \cite[Definition 2.1]{IS},
with fixed trace connection $(L_0,\nabla_0)$.
Here the spectral data of $\nabla_0$ is determined by the fixed irregular curve $X$.
The quasi-parabolic structure $l^{(i)}$ at $t_i$
induces a one dimensional subspace $l^{(i)}_{\mathrm{red}}$ of $E|_{t_i}$, that is the restriction
of $l^{(i)}$ to $t_i$ (without multiplicity).
Our moduli space is as follows:
$$
M_X(L_0,\nabla_0)_0
:= \left. \left\{ 
(E,\nabla , \{ l^{(i)}\} )
\ \middle| \ 
\begin{array}{l}
\text{(i) $\nabla$ has the fixed spectral data in $X$,}\\
\text{(ii) $E$ is an extension of $L_0$ by $\mathcal{O}_C$,}\\
\text{(iii) $\dim_{\mathbb{C}} H^0(C,E) =1$, and} \\
\text{(iv) $l_{\text{red}}^{(i)}  \not\in \mathcal{O}_{C}|_{t_i}
\subset \mathbb{P}(E)$ for any $i$} 
\end{array}
\right\} \right/  \cong,
$$
which is described in Section \ref{SubSect:ModuliSL2}.
Here $(E,\nabla , \{ l^{(i)}\} )$ are rank 2 quasi-parabolic connections on $(C,D)$ 
with fixed trace connection $(L_0,\nabla_0)$.
When $g=0$, we impose one more condition (in detail, see
the paragraph after the proof of 
Lemma \ref{2023_7_10_12_15} below).
This moduli space also has 
a natural symplectic structure.
The dimension of the moduli space $M_X(L_0,\nabla_0)_0$ is $2N_0$,
where $N_0 := 3g-3+\deg(D)$.
For $(E,\nabla , \{ l^{(i)}\} ) \in M_X(L_0,\nabla_0)_0$,
we can also define apparent singularities (Section \ref{SubSect:ModuliSL2} below).
The apparent singularities give an element of 
$\mathbb{P}H^0(C, L_0\otimes \Omega_C^1(D))$.
So we have a map 
$$\pi_{\mathrm{App}} \colon 
M_X(L_0,\nabla_0)_0 \longrightarrow 
\mathbb{P}H^0(C, L_0\otimes \Omega_C^1(D)).
$$ 
For $(E,\nabla , \{ l^{(i)}\} ) \in M(L_0,\nabla_0)_0$,
we forget the connection $\nabla$.
So we have a quasi-parabolic bundle $(E, \{ l^{(i)}\} )$.
By taking the extension class for the quasi-parabolic bundle $(E, \{ l^{(i)}\} )$,
we have a map
$$
\pi_{\mathrm{Bun}} \colon 
M_X(L_0,\nabla_0)_0 \longrightarrow 
\mathbb{P}H^1(C, L_0^{-1}(-D)).
$$
Here the extension class is described in 
Section \ref{SubSect:ModuliParaBunSL2} below.
We consider the product 
$$
\pi_{\mathrm{App}} \times \pi_{\mathrm{Bun}} \colon 
M_X(L_0,\nabla_0)_0 \longrightarrow 
\mathbb{P}H^0(C, L_0\otimes \Omega_C^1(D))
\times \mathbb{P}H^1(C, L_0^{-1}(-D)).
$$ 
This map has been studied by
Loray--Saito--Simpson \cite{LSS},
Loray--Saito \cite{LS},
Fassarella--Loray \cite{FL},
Fassarella--Loray--Muniz \cite{FLM}, and
Matsumoto \cite{Matsu}.

Notice that 
$H^1(C, L_0^{-1}(-D))$ is isomorphic to 
the dual of $H^0(C, L_0\otimes \Omega_C^1(D))$.
Remark that 
$$
\dim_{\mathbb{C}} \mathbb{P} H^0(C, {L_0} \otimes \Omega^1_C(D))
= \dim_{\mathbb{C}} \mathbb{P} H^1(C, L_0^{-1}(-D))
=N_0.
$$
Let us introduce the
homogeneous coordinates
$\boldsymbol{a} = (a_0 :\cdots : a_{N_0})$
on 
$\mathbb{P} H^0(C, {L_0} \otimes \Omega^1_C(D))\cong \mathbb{P}_{\boldsymbol{a}}^{N_0}$
and the dual coordinates 
$\boldsymbol{b} = (b_0 :\cdots : b_{N_0})$
on 
$$
\mathbb{P} H^1(C, L_0^{-1}(-D))\cong 
\mathbb{P} H^0(C, {L_0}\otimes \Omega^1_C(D))^{\vee} \cong \mathbb{P}_{\boldsymbol{b}}^{N_0}.
$$
We may define a $1$-form $\eta$ on 
$\mathbb{P}_{\boldsymbol{a}}^{N_0} \times \mathbb{P}_{\boldsymbol{b}}^{N_0}$
by 
$$
\eta = \left( \text{constant} \right) \cdot
\frac{a_0 \, d b_0 + a_1 \, d  b_1 
+ \cdots + a_{N_0} \, d b_{N_0}
}{a_0b_0 + a_1b_1 + \cdots + a_{N_0}b_{N_0}}.
$$
(In detail, see Section \ref{SubSect:SyplecticSL2}).
The $2$-form $d \eta$ gives an symplectic structure 
on $\mathbb{P}_{\boldsymbol{a}}^{N_0} \times \mathbb{P}_{\boldsymbol{b}}^{N_0} 
\setminus \Sigma$. 
Here we set 
$$
\Sigma \colon  (a_0b_0 + a_1b_1 + \cdots + a_{N_0}b_{N_0}=0) 
\subset \mathbb{P}_{\boldsymbol{a}}^{N_0} \times \mathbb{P}_{\boldsymbol{b}}^{N_0}.
$$
The image of $M(L_0,\nabla_0)_0$
is contained in $\mathbb{P}_{\boldsymbol{a}}^{N_0} \times \mathbb{P}_{\boldsymbol{b}}^{N_0} 
\setminus \Sigma$.
(In detail, see Section \ref{SubSect:MapsSL2}).
The second main theorem is the following:
\begin{thmA}[Theorem \ref{2023_8_22_16_22} below]
We assume that the fixed spectral data satisfies
the generic condition \eqref{2023_8_24_8_47} below.
The pull-back of the symplectic form $d\eta$ on 
$\mathbb{P}_{\boldsymbol{a}}^{N_0} \times \mathbb{P}_{\boldsymbol{b}}^{N_0}\setminus \Sigma$
under the map 
$\pi_{\mathrm{App}} \times \pi_{\mathrm{Bun}}$
coincides with the symplectic form on the moduli space $M_X(L_0,\nabla_0)_0$.
\end{thmA}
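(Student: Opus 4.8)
\emph{Proof plan.} The idea is to compute the intrinsic symplectic form of $M_X(L_0,\nabla_0)_0$ directly in the coordinates $(\boldsymbol a,\boldsymbol b)$ produced by $\pi_{\mathrm{App}}\times\pi_{\mathrm{Bun}}$ and to identify it with the pullback of $d\eta$. I begin from the hypercohomological description of the symplectic form (Inaba--Iwasaki--Saito \cite{IIS}, Inaba \cite{Ina}, Inaba--Saito \cite{IS}): the tangent space at $(E,\nabla,\{l^{(i)}\})$ is the first hypercohomology $\mathbb{H}^1$ of the complex $\bigl[\mathcal{E}nd^0(E)\xrightarrow{\nabla}\mathcal{E}nd^0(E)\otimes\Omega^1_C(D)\bigr]$ of trace-free, parabolic-compatible endomorphisms, and the form is the cup product composed with the trace, valued in $H^1(C,\Omega^1_C)\cong\mathbb{C}$ via the residue. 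Through the hypercohomology sequence a tangent vector splits into a connection part in $H^0(C,\mathcal{E}nd^0(E)\otimes\Omega^1_C(D))$ and a bundle part in $H^1(C,\mathcal{E}nd^0(E))$; the pairing of two connection parts vanishes because their product is an alternating expression in $\Omega^1_C\otimes\Omega^1_C$, which is zero on a curve, and the pairing of two bundle parts vanishes because it factors through $H^2(C,\mathcal{O}_C)=0$. Hence the intrinsic form only pairs connection parts against bundle parts.

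I next make the two maps explicit. The fibre of $\pi_{\mathrm{Bun}}$ (fix the parabolic bundle, move the connection) is tangent to the connection directions, hence is isotropic by the previous paragraph, and of dimension $N_0=\tfrac12\dim M_X(L_0,\nabla_0)_0$, so it is Lagrangian; $d\pi_{\mathrm{Bun}}$ identifies the transverse bundle directions with $T\mathbb{P}H^1(C,L_0^{-1}(-D))$. On the other side, using that $\dim H^0(C,E)=1$ pins down the section $\phi$ up to scalar, the map $\pi_{\mathrm{App}}$ records in $\mathbb{P}H^0(C,L_0\otimes\Omega^1_C(D))$ the class of the second fundamental form of $\nabla$ along $\mathcal{O}_C\subset E$ evaluated on $\phi$, and I would compute $d\pi_{\mathrm{App}}$ as the induced derivative of this section. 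The content of separation of variables is then that $\pi_{\mathrm{App}}$ is likewise Lagrangian, so that in the coordinates $(\boldsymbol a,\boldsymbol b)$ the intrinsic form has neither a $d\boldsymbol a\wedge d\boldsymbol a$ nor a $d\boldsymbol b\wedge d\boldsymbol b$ component -- exactly as for $d\eta$, whose restriction to $\{\boldsymbol b=\mathrm{const}\}$ vanishes and whose restriction to $\{\boldsymbol a=\mathrm{const}\}$ is the exact form $(\text{const})\,d\log(\textstyle\sum_k a_kb_k)$.

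It remains to match the surviving cross term and the constant. The crucial point is that $\sum_k a_kb_k$ is the Serre-duality pairing of the apparent class $\boldsymbol a$ with the extension class $\boldsymbol b$ under $H^1(C,L_0^{-1}(-D))\cong H^0(C,L_0\otimes\Omega^1_C(D))^\vee$; its nonvanishing on the image is precisely the avoidance of $\Sigma$, which makes $\eta$ regular there. Fixing Serre-dual bases of these two spaces and lifting the homogeneous coordinates to cohomology classes through the normalization of $\phi$, I would evaluate the intrinsic residue pairing of a $d\boldsymbol b$-direction against a $d\boldsymbol a$-direction and verify that it equals $\bigl(\sum_k a_k\,db_k\bigr)/\bigl(\sum_k a_kb_k\bigr)$ up to the asserted constant; the generic spectral condition \eqref{2023_8_24_8_47} enters exactly here, through the residue contributions at $D$, to fix that constant.

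The main obstacle is this normalization, and it has two entangled sources. First, neither $\boldsymbol a$ nor $\boldsymbol b$ carries a canonical scale, while the intrinsic form is scale-independent; $d\eta$ is manufactured to be invariant under $(\boldsymbol a,\boldsymbol b)\mapsto(\lambda\boldsymbol a,\mu\boldsymbol b)$ since $\eta$ then changes only by the exact form $d\log\mu$, and one must check that the intrinsic cross pairing descends to $\mathbb{P}_{\boldsymbol a}^{N_0}\times\mathbb{P}_{\boldsymbol b}^{N_0}$ with the same invariance. Second, and most delicate, is proving the Lagrangian property of $\pi_{\mathrm{App}}$ and extracting the exact constant from the residue bookkeeping, balancing the apparent-singularity contributions against those at $D$; I expect it to be efficient to localize this final computation at one explicitly chosen point once the canonical-pairing shape of the form is in hand.
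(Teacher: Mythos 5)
Your outline correctly anticipates the shape of the paper's argument --- the form should reduce to a cross pairing between the $\boldsymbol{a}$- and $\boldsymbol{b}$-directions, matched against $d\eta$ via Serre duality, with the constant and the avoidance of $\Sigma$ controlled by $\lambda$ and the condition $\sum_i\theta^-_{i,-1}\neq 0$ --- but as written it is a plan rather than a proof, and the one vanishing argument you do supply is not correct. The pairing of two classes in $\bH^1(\mathcal{F}_0^{\bullet})$ lands in $\bH^2(\mathcal{O}_C\to\Omega^1_C)\cong H^1(C,\Omega^1_C)\cong\mathbb{C}$, not in $H^2(C,\mathcal{O}_C)$; the vanishing of the latter only lets you rewrite the $C^2$-component $\{\tr(u_{\alpha\beta}u'_{\beta\gamma})\}$ as a coboundary, after which a nontrivial class in $H^1(C,\Omega^1_C)$ can survive. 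Moreover the ``bundle part'' of a tangent vector is only well defined as an image in $H^1(\mathcal{F}_0^0)$ --- the hypercohomology sequence gives a filtration, not a splitting --- so ``the pairing of two bundle parts'' has no intrinsic meaning until you exhibit an isotropic complement, and producing that complement is exactly as hard as the Lagrangian claim for $\pi_{\mathrm{App}}$ that you defer. The paper disposes of both issues at once by working in the trivializations $\varphi^{\mathrm{Ext}}_{\alpha}$ adapted to $\mathcal{O}_C\subset E$ and to the parabolic directions: there $u_{\alpha\beta}(v)$ is strictly upper triangular, so $\tr(u_{\alpha\beta}(v)u_{\beta\gamma}(v'))=0$ identically, its only entry being $v(b^{\mathrm{Bun}}_{\alpha\beta})/b^{22}_{\alpha}$, while the $(2,1)$-entry of $v_{\alpha}(v)$ is $v(a^{\mathrm{App}}_{\alpha}/\lambda)\,b^{22}_{\alpha}$, and the cross term computes in one line to $\sum_k\bigl(v(b_k)v'(a_k/\lambda)-v'(b_k)v(a_k/\lambda)\bigr)$.

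The remaining, genuinely substantive step that your proposal leaves open is converting this expression into $d\eta(v,v')$. This is where Lemma \ref{2023_7_10_21_01} enters, giving $\lambda=\bigl(\sum_k a_kb_k\bigr)/\bigl(-\sum_i\theta^-_{i,-1}\bigr)$, so that $a_k/\lambda$ becomes the homogeneous expression $\bigl(-\sum_i\theta^-_{i,-1}\bigr)a_k/\sum_l a_lb_l$ whose antisymmetrized differential is exactly $d\eta$; this identity is also what makes the hypothesis $\sum_i\theta^-_{i,-1}\neq 0$ and the exclusion of $\Sigma$ do actual work. Without it you cannot ``extract the constant,'' and localizing the computation at one point, as you propose, will not produce the denominator $\sum_k a_kb_k$, which is a global pairing rather than a local residue. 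So the concrete gap is twofold: (i) establish isotropy of the relevant complement (the upper-triangular trivialization does this), and (ii) prove and substitute the relation between $\lambda$ and $\sum_k a_kb_k$.
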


\subsection{The organization of this paper}
In Section \ref{sect:CompanionNF},
the apparent singularities for a generic rank 2 meromorphic connection are defined.
After the definition of the apparent singularities,
we will discuss on the companion normal form of a
generic rank 2 meromorphic connection.
We will use this companion normal form when we will introduce 
canonical coordinates.
In Section \ref{Sect:SymplecticGL2andCano},
first, we will describe our moduli space of 
rank 2 meromorphic connections.
Second, we will discuss on tangent spaces of 
the moduli space of 
rank 2 meromorphic connections.
We will recall that the tangent spaces at a meromorphic connection 
are isomorphic to a hypercohomology 
of the complex defined by the meromorphic connection.
After that, we will describe a natural 
symplectic structure on the moduli space of 
rank 2 meromorphic connections.
Section \ref{2023_7_4_13_59}
and Section \ref{2023_7_12_16_39}
are preliminaries of the proof of the first main theorem.
In Section \ref{subsect:Canonical_Coor},
we will give the
map from a generic part of the moduli space to
$\mathrm{Sym}^N(\boldsymbol{\Omega}(D,c_d))$
and will show the first main theorem.

In Section \ref{sect:sympl_FixedDet},
we will consider rank 2 meromorphic connections 
with fixed trace connection.
First, to describe the bundle map $\pi_{\mathrm{Bun}}$,
we recall the moduli space of stable quasi-parabolic bundles
with fixed determinant.
Second, 
we will describe our moduli space of 
rank 2 meromorphic connections 
with fixed trace connection.
Third, we will describe the map $\pi_{\mathrm{App}}$
defined by considering the apparent singularities.
In Section \ref{SubSect:SyplecticSL2},
we will recall a natural 
symplectic structure on the moduli space of 
rank 2 meromorphic connections
with fixed trace connection,
and will show the second main theorem.

In Section \ref{Sect:CompForElliptic}, 
we will apply the argument in 
Section \ref{sect:CompanionNF}
and 
Section \ref{Sect:SymplecticGL2andCano}
to the case of an elliptic curve with a divisor $D$ of length $2$. 
When $D$ is reduced, this amounts to two logarithmic singularities, otherwise to an irregular singularity. 
It is remarkable that using our approach these two cases can be studied completely similarly.

In Section \ref{Sec:Higgs}, we will provide a method for obtaining canonical coordinates $\tilde{p}_j  \in \boldsymbol{\Omega}(D, c_d)_{|q_i}$ for generic $(E, \nabla) 
\in M^0_X$  by introducing a section $s \in H^0(C, \det(E))$ 
and $\gamma \in H^0(C, \Omega^1_C(D))$. We will utilize an open set 
$U_{0} = C \setminus \{ s=0, \gamma=0 \}$ and the trivialization of 
$E_{|U_0}$ to define $\tilde{p}_j \in \Omega^1_C(D)_{|q_j}$.
This method can be also used for  constructing a meromorphic connection 
$\nabla_1\colon  E \longrightarrow E \otimes \Omega^1_C(D(s))$ 
for a given $s \in H^0(C, \det(E))$, where $D(s)$ denotes the zero divisor of $s$. 
In Theorem \ref{thm:birational},  we will provide an alternative proof of the birationality of $f_{\mathrm{App}}$ (cf.  Proposition \ref{prop:birational}) by utilizing the Higgs fields 
$\nabla - \nabla_1$ and the BNR correspondence \cite{BNR}. 
This approach may shed new light on the relationship 
between the canonical coordinates of the moduli spaces of connections 
and the moduli spaces of Higgs bundles. (cf.  \cite{SS}).


\subsection*{Acknowledgments}
The authors would like to warmly thank Michi-aki Inaba
and Takafumi Matsumoto
for useful discussions.  The first, third, and fourth authors would like to thank Frank Loray for his hospitality at IRMAR, Univ. Rennes.


\section{Companion normal form}\label{sect:CompanionNF}

Let $C$ be a compact Riemann surface of genus $g$ ($g\geq 0$),
and $D$ be an effective divisor on $C$.
We assume $4g-3+n>0$ where $n=\deg(D)$.
We consider a rank 2 meromorphic connection
\begin{equation}\label{2023_7_13_12_11}
\nabla:E\longrightarrow E\otimes\Omega^1_C(D)
\end{equation}
on $C$ where $\deg(E)=2g-1$.

When 
$g=0$,
Diarra--Loray
have given companion normal forms
of the rank 2 meromorphic connections in
\cite{DiarraLoray}.
By the companion normal forms,
we may construct a universal family of the rank 2 meromorphic connections
on some generic part of the moduli space of rank 2 meromorphic connections.
This universal family is useful to describe the isomonodromic deformations \cite{Kom}.
The purpose of this section is 
to give companion normal forms
of rank 2 meromorphic connections when $g\geq 0$.
For this purpose, first, we will introduce the apparent singularities for 
(generic) rank 2 meromorphic connections.

\subsection{Apparent singularities}\label{SubSect:AppGL2}
First we assume that $\dim_{\mathbb{C}} H^0(C,E)=1$ for the rank 2 meromorphic connection
\eqref{2023_7_13_12_11}.
This assumption holds
for a generic vector bundle of the rank 2 meromorphic connection
with $\deg(E)=2g-1$.
For an element of $H^0(C,E)$, 
we define 
the sequence of $\mathbb C$-linear maps 
\begin{equation}\label{2023_7_11_12_46}
\varphi_\nabla \colon \mathcal O_C \longrightarrow
 E \xrightarrow{\ \nabla \ } E\otimes\Omega^1_C(D) 
 \longrightarrow E/ \mathcal O_C \otimes\Omega^1_C(D) .
 \end{equation}
This composition $\varphi_\nabla$ is an $\mathcal O_C$-linear map.
From now on we assume that $\varphi_\nabla \neq 0$. 
This assumption holds for every $(E,\nabla)$, 
provided that the eigenvalues of the residues are chosen generically 
(see Remark \ref{2023_7_13_16_34} below). 
We call the global section in $H^0(C,E)$ in \eqref{2023_7_11_12_46} 
the {\it cyclic vector}.

Let us now define $E_0\subset E$ as the rank $2$ locally free subsheaf 
spanned by $\mathcal O_C$ and 
$$
\operatorname{Im}\left\{\nabla|_{\mathcal O_C} \otimes 
\operatorname{Id}_{(\Omega^1_C(D))^{-1}} 
\colon (\Omega^1_C(D))^{-1} \to E \right\}. 
$$ 
This construction gives rise to a short exact sequence of coherent sheaves 
$$
    0 \longrightarrow
     \mathcal O_C \longrightarrow
      E_0 \longrightarrow
       (\Omega^1_C(D))^{-1} \longrightarrow 0.
$$
We claim that this sequence splits, i.e. 
\begin{equation}\label{eq:decomposition}
 E_0\cong \mathcal O_C\oplus (\Omega^1_C(D))^{-1}. 
\end{equation}
Indeed, equivalence classes of extensions of $(\Omega^1_C(D))^{-1}$ by 
$\mathcal O_C$ are classified by the group 
$$
    \operatorname{Ext}^{1} ((\Omega^1_C(D))^{-1} , \mathcal O_C) = 
    \operatorname{Ext}^{1} (\mathcal O_C (-D) , \Omega^1_C) \cong 
    H^0 (C, \mathcal O_C (-D) )^{\vee} = 0,  
$$
where we have used Grothendieck--Serre duality. 
We denote by 
\begin{equation}\label{eq:phi_nabla}
 \phi_\nabla \colon E_0  \longrightarrow E. 
\end{equation}
the canonical inclusion morphism, and define the meromorphic connection 
\begin{equation}\label{2023_7_13_12_39}
 \nabla_0 = \phi_\nabla^* (\nabla) 
\end{equation}
on $E_0$. 
We note that the polar divisor of $\nabla_0$ is $D+B$ where 
\begin{equation}\label{eq:B}
 B=\mathrm{div}(\varphi_\nabla ). 
\end{equation}
We note that
\begin{equation}\label{eq:deg(B)}
 \deg(B)=4g-3+n.
\end{equation}
From now on, moreover, we assume that $B$ is reduced, with support disjoint from $D$. 
In different terms, in view of~\eqref{eq:deg(B)}, we have 
$$
B=q_1+\cdots+q_{4g-3+n}
$$
where $q_i \neq q_j$ once $i\neq j$ and $q_i \notin D$ for all $i$. 

\begin{definition}\label{2023_8_22_13_40}
Assume that $\varphi_\nabla \neq 0$ and 
$\mathrm{div}(\varphi_\nabla )$ is reduced, with support disjoint from $D$.
We call
the points of the support $\{ q_1,\ldots,q_{4g-3+n}\}$ of $\mathrm{div}(\varphi_\nabla )$
the {\rm apparent singularities of $(E,\nabla)$}.
\end{definition}

\subsection{Companion normal form}\label{SubSect:CNFGL2}
The desired companion normal form is a normal form of $\nabla_0$ in \eqref{2023_7_13_12_39}.
So the companion normal form is given by normalization of $\nabla_0$
by applying automorphisms on $\mathcal O_C\oplus (\Omega^1_C(D))^{-1}$.
To give the companion normal form,
first, we describe 
a decomposition of $\nabla_0$ relative to~\eqref{eq:decomposition}: 
$$\nabla_0=\begin{pmatrix}\alpha & \beta\\
\gamma & \delta \end{pmatrix}$$
where 
$$\left\{\begin{matrix}
\alpha&:& \mathcal O_C\longrightarrow \Omega^1_C(D+B) \hfill (\text{connection})\\
\beta&:& (\Omega^1_C(D))^{-1}\longrightarrow \Omega^1_C(D+B)  \hfill (\mathcal O_C\text{-linear})\\
\gamma&:& \mathcal O_C\longrightarrow \mathcal O_C(B)  \hfill (\mathcal O_C\text{-linear})\\
\delta&:& (\Omega^1_C(D))^{-1}\longrightarrow (\Omega^1_C(D))^{-1}\otimes \Omega^1_C(D+B)  \hskip1cm (\text{connection})
\end{matrix}\right.$$
This form is unique only up to pre-composition by an element 
of the automorphism group $\mathrm{Aut}(E_0)$ of $E_0$.
Elements of $\mathrm{Aut}(E_0)$ are described as follows:
$$
\begin{pmatrix}
\lambda_1 & F \\
0& \lambda_2
\end{pmatrix},
$$
where $\lambda_1, \lambda_2 \in \mathbb C^*$
and $F \in H^0(C, \Omega^1_C(D))$.
It follows by construction that $\nabla_0$ admits no pole in restriction to 
$\mathcal O_C$ over the divisor $B$, so that actually we have 
$$\left\{\begin{matrix}
\alpha&:& \mathcal O_C\longrightarrow \Omega^1_C(D) \hskip1cm (\text{connection})\\
\gamma&:& \mathcal O_C\longrightarrow \mathcal O_C  \hfill (=\text{identity})
\end{matrix}\right.$$
The action of an automorphism of the form 
\begin{equation}\label{eq:automorphism}
 \begin{pmatrix} 1 & F\\
    0 & 1 \end{pmatrix},\ \ \ F\in H^0(C, \Omega^1_C(D))
\end{equation}
transforms $\alpha$ into $\alpha-F\gamma$ (without affecting $\gamma$).
Therefore, there exists a unique choice $F$ such that $\alpha=\operatorname{d}$ is the 
trivial connection on $\mathcal O_C$. 
We thus get the unique companion normal form 
\begin{equation}\label{eq:normal_form}
 \nabla_0=\begin{pmatrix}\operatorname{d} & \beta\\
 1 & \delta \end{pmatrix}.
\end{equation}

Notice that the same companion normal form is obtained simply by taking 
the generator $\varphi_\nabla (1)$ for the second factor 
of~\eqref{eq:decomposition}, and the action of the 
automorphism~\eqref{eq:automorphism} in the above argument simply amounts 
to switching to this particular generator.

\subsection{Spectral data}\label{SubSect:SpectralData}

Now we consider the polar part of the meromorphic connection \eqref{2023_7_13_12_11}
at each point of the support of $D$.
We impose some conditions on the polar parts.
To describe the conditions, we introduce the notion of irregular curves with residues.
Let $\nu$ be a positive integer.
We set $I := \{ 1,2,\ldots,\nu\}$.
Let 
$\mathfrak{h}$
be the Cartan subalgebra
$$
\mathfrak{h}= 
\left\{ 
\begin{pmatrix}
h_1 & 0 \\
0 & h_2 
\end{pmatrix} \ \middle| \ 
h_1,h_2  \in \mathbb{C}
\right\}
$$
of the Lie algebra $\mathfrak{gl}_2(\mathbb{C})$.
Let $\mathfrak{h}_0$ be the
regular locus of $\mathfrak{h}$.

\begin{definition}\label{2023_7_14_23_01}
We say $X=(C,D,\{ z_i \}_{i \in I} , \{\boldsymbol{\theta}_{i} \}_{i \in I},
\boldsymbol{\theta}_{\mathrm{res}})$ is an 
{\rm irregular curve with residues} if 
\begin{itemize}
\item[(i)]  $C$ is a compact Riemann surface of genus $g$, 

\item[(ii)] $D = \sum_{i\in I} m_i [t_i]$ is an effective divisor on $C$.

\item[(iii)] $z_i$ is a generator of the maximal ideal of $\mathcal{O}_{C,t_i}$,

\item[(iv)] 
$\boldsymbol{\theta}_{i}
= (\theta_{i,-m_i} ,( \theta_{i,-m_i+1} ,\ldots, \theta_{i,-2}) )
\in \mathfrak{h}_0 \times \mathfrak{h}^{m_i -2}$, and 

\item[(v)] $\boldsymbol{\theta}_{\mathrm{res}}
= (\theta_{1,-1}, \theta_{2,-1},\ldots, \theta_{\nu,-1})$, where 
$\theta_{i,-1} \in 
\mathfrak{h}$, such that 
$\sum_{i=1}^{\nu} \mathrm{tr}(\theta_{i,-1} ) =-(2g-1)$.
\end{itemize}
We set  
$$
 \theta_{i,-1} = 
\begin{pmatrix}
\theta_{i,-1}^{-} & 0 \\
0 & \theta_{i,-1}^+
\end{pmatrix} \quad 
\text{for each $i \in I$}.
$$
We assume that 
$\sum_{i=1}^{\nu} \theta^{\pm}_{i,-1} \not\in \mathbb{Z}$
whatever are the signs $\pm$,
and,
if $m_i =1$, then $\theta^+_{i,-1} - \theta^-_{i,-1} \not\in \mathbb{Z}$.
\end{definition}

For an
irregular curve with residues $X$,
we set
\begin{equation}\label{2023_4_10_19_32}
\omega_i(X ) := 
\theta_{i,-m_i} \frac{\operatorname{d}\!z_i}{z_i^{m_i}} +
 \theta_{i,-m_i+1} \frac{\operatorname{d}\!z_i}{z_i^{m_i-1}} +
\cdots+ \theta_{i,-2}\frac{\operatorname{d}\!z_i}{z_i^{2}} +
\theta_{i,-1}\frac{\operatorname{d}\!z_i}{z_i}
\end{equation}
and $\mathcal{O}_{m_i[t_i]} := \mathcal{O}_{C,t_i}/(z_i^{m_i})$.
For an
irregular curve with residues $X$
and a meromorphic connection $(E,\nabla)$ in \eqref{2023_7_13_12_11},
we set $E|_{m_i [t_i]} := E \otimes \mathcal{O}_{m_i[t_i]}$.
Let 
$$
\nabla|_{m_i[t_i]} 
\colon 
E|_{m_i [t_i]} \longrightarrow E|_{m_i [t_i]} \otimes \Omega^1_C(D)
$$
be the morphism induced by $\nabla$.

\begin{definition}\label{2023_7_13_16_05}
We call $(E,\nabla  )$ a {\rm rank $2$ meromorphic connection over an
irregular curve with residues $X$} if 
\begin{itemize}
\item[(i)] $E$ is a rank $2$ vector bundle of degree $2g-1$ on $C$,

\item[(ii)] $\nabla \colon E \rightarrow E \otimes \Omega^1_{C}(D)$ is a connection, and

\item[(iii)] there exists an isomorphism 
$ \varphi_{m_i[t_i]} \colon E|_{m_i [t_i]} \rightarrow  \mathcal{O}^{\oplus 2}_{m_i[t_i]}$
for each $i \in I$
such that 
 $$
( \varphi_{m_i[t_i]}\otimes 1) \circ \nabla|_{m_i[t_i]} \circ \varphi^{-1}_{m_i[t_i]} 
 = \operatorname{d} +\, \omega_i(X ).
 $$
 Here $\omega_i(X )$ is defined in \eqref{2023_4_10_19_32}.

\end{itemize}
We call $\omega_i(X )$ the {\rm spectral data} of $(E,\nabla  )$
and call the submodule $\varphi_{m_i[t_i]}^{-1}(\mathcal{O}_{m_i[t_i]} \oplus 0)$
of $E|_{m_i [t_i]}$ the {\rm quasi-parabolic structure of $(E,\nabla  )$ at $t_i$}.

\end{definition}

From now on, by a connection we will mean a rank $2$ meromorphic connection over a fixed irregular curve with residues $X$. 
So we impose the condition (iii) of Definition \ref{2023_7_13_16_05} on 
the polar parts of the meromorphic connection $\nabla$ in \eqref{2023_7_13_12_11}
at the points of the support of $D$.
This condition means that 
the polar parts of $\nabla$ at $t_i$ 
are diagonalizable with eigenvalues equal to the diagonal entries of $\omega_i(X )$ for $i=1,2,\ldots,n$.

\begin{remark}\label{2023_7_13_16_34}
In Definition \ref{2023_7_13_16_05}, 
we impose the condition that $\sum_{i=1}^{\nu} \theta^{\pm}_{i,-1} \not\in \mathbb{Z}$
whatever are the signs $\pm$.
By this assumption and the argument as in \cite[Proposition 6]{KLS}, 
we have that $(E,\nabla)$ is irreducible. 
Then some arguments become simple.
For example, 
$\varphi_\nabla = 0$ if and only if the free subsheaf $\mathcal O_C$ of 
$E$ is a proper $\nabla$-invariant subbundle.
So we have that $\varphi_{\nabla} \neq 0$.
Moreover, $(E,\nabla)$ is automatically stable (described in Section \ref{subsect_moduli} below).
\end{remark}

\subsection{The polar parts of $\delta$}\label{subsec:delta}

We fix an
irregular curve with residues $X$.
Let $(E,\nabla  )$ 
be a rank $2$ meromorphic connection over $X$
and $\nabla_0$ be the companion normal form for $(E,\nabla  )$.
We consider the $(2,2)$-entry $\delta$ of this 
companion normal form $\nabla_0$. 

It immediately follows from~\eqref{eq:normal_form} that the connection 
$\delta$ coincides with the trace connection $\tr(\nabla_0)$ on 
$\det(E_0)=(\Omega^1_C(D))^{-1}$.
It is further related to the trace connection $\tr(\nabla)$ by
$$\delta = \tr(\nabla_0)=\tr(\nabla)+\frac{\operatorname{d}\!\varphi_\nabla}{\varphi_\nabla}.$$
\begin{lem}\label{lem:delta}
\begin{enumerate}
\item The polar part of $\delta$ over $D$ is determined by the spectral 
data;
\item The polar part of $\delta$ over $B$ is logarithmic with residue $+1$;
\item $\delta$ is determined by the irregular curve with residues $X$ up to adding a holomorphic $1$-form of $C$. 
\end{enumerate}
\end{lem}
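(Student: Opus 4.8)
The plan is to extract all three assertions from the identity $\delta=\tr(\nabla)+\operatorname{d}\varphi_\nabla/\varphi_\nabla$ obtained just above, by a pole-by-pole comparison of the two summands. The only term needing care is the logarithmic derivative $\operatorname{d}\varphi_\nabla/\varphi_\nabla$: here $\varphi_\nabla$ is not a function but a holomorphic section of the line bundle $(E/\mathcal{O}_C)\otimes\Omega^1_C(D)$, of degree $4g-3+n$ by \eqref{eq:deg(B)}. Its holomorphic part depends on an auxiliary trivialization, but its polar part is intrinsic and is read off from $\operatorname{div}(\varphi_\nabla)=B=q_1+\cdots+q_{4g-3+n}$ in \eqref{eq:B}. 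First I would record that, because $B$ is reduced with support disjoint from $D$, the form $\operatorname{d}\varphi_\nabla/\varphi_\nabla$ is holomorphic on $C\setminus B$ and has a simple pole of residue $+1$ at each $q_j$.

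For (1), I would localize at a point $t_i$ of the support of $D$. There $\varphi_\nabla$ is holomorphic and nonvanishing, so $\operatorname{d}\varphi_\nabla/\varphi_\nabla$ is holomorphic, and the polar part of $\delta$ at $t_i$ coincides with that of $\tr(\nabla)$. By condition (iii) of Definition \ref{2023_7_13_16_05}, the $m_i$-jet of $\nabla$ at $t_i$ is gauge-equivalent to $\operatorname{d}+\omega_i(X)$, whence the polar part of $\tr(\nabla)$ at $t_i$ equals $\tr(\omega_i(X))$, a principal part whose coefficients are the scalars $\tr(\theta_{i,-k})$ appearing in \eqref{2023_4_10_19_32}; a holomorphic change of trivialization only alters the connection form by a holomorphic logarithmic derivative, so does not affect this polar part. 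As this depends only on $X$, assertion (1) follows. For (2), away from $D$ the trace connection $\tr(\nabla)$ is holomorphic, so near each $q_j$ the polar part of $\delta$ is exactly that of $\operatorname{d}\varphi_\nabla/\varphi_\nabla$, namely a simple pole of residue $+1$, giving (2). As a sanity check, the residues of the connection $\delta$ on $(\Omega^1_C(D))^{-1}$ must sum to $\deg\Omega^1_C(D)=2g-2+n$, and indeed the $B$-contribution $4g-3+n$ plus the $D$-contribution $\sum_i\tr(\theta_{i,-1})=-(2g-1)$ from Definition \ref{2023_7_14_23_01}(v) do add up to $2g-2+n$.

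Finally, for (3), parts (1) and (2) together with holomorphy on $C\setminus(D+B)$ show that the full polar part of $\delta$ is prescribed by $X$ once the apparent singularities $B$ are fixed. If $\delta'$ is any connection on the same line bundle $(\Omega^1_C(D))^{-1}$ sharing this polar part, then $\delta-\delta'$ is a global section of $\Omega^1_C(D+B)$ with no poles, hence a holomorphic $1$-form in $H^0(C,\Omega^1_C)$, which is exactly assertion (3). The one genuinely delicate point — the main obstacle — is the correct interpretation of $\operatorname{d}\varphi_\nabla/\varphi_\nabla$ as the logarithmic derivative of a section of a nontrivial line bundle, and the verification that it contributes only simple poles of residue $+1$ over $B$ and nothing over $D$; once that bookkeeping is pinned down, statements (1)–(3) follow formally.
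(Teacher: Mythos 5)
Your proof is correct and follows essentially the same route as the paper's (much terser) argument: reading the polar parts off the identity $\delta=\tr(\nabla_0)=\tr(\nabla)+\operatorname{d}\!\varphi_\nabla/\varphi_\nabla$ pole by pole, and obtaining (3) by differencing two connections with the same prescribed polar parts. Your version is in fact more careful than the paper's on two points it leaves implicit — the interpretation of $\operatorname{d}\!\varphi_\nabla/\varphi_\nabla$ as a logarithmic derivative of a section of a line bundle, and the fact that (3) requires $B$ to be fixed — and the residue-count sanity check is consistent.
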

\begin{proof}
 The polar part of $\delta$ at $t_i$ is equal to $\operatorname{tr} (\omega_i )$, showing the first assertion. 
 In view of our assumption $q_{j_1}\neq q_{j_2}$ for ${j_1}\neq {j_2}$, the second assertion is classical. 
 Let now $\delta, \delta'$ be the $(2,2)$-entries of companion normal forms $\nabla_0, \nabla_0'$ of connections $\nabla, \nabla'$ satisfying the conditions of  Definition~\ref{2023_7_13_16_05}. 
 By the first part, $\delta - \delta'$ is then a global holomorphic $1$-form of $C$. 
\end{proof}

As a consequence of the lemma and by $\operatorname{dim}_{\mathbb{C}} H^0(C, \Omega_C^1) = g$, the possible values for $\delta$ represent $g$ free parameters for a meromorphic connection over $X$. 

\subsection{The polar parts of $\beta$}\label{subsec:beta}

Next we consider the $(1,2)$-entry $\beta$ of the companion normal form $\nabla_0$ of a meromorphic connection $\nabla$ over the irregular curve with residues $X$. 
By the condition $\gamma = 1$ in~\eqref{eq:normal_form}, $\beta$ accounts for the  determinant of the characteristic polynomial of the residues. 
By Definition~\ref{2023_7_13_16_05}, the eigenvalues of the connection matrix of 
$\nabla$ are differentials (of the first kind) with a pole of order at most $m_i$ at $t_i$. 
The same condition then holds for $\nabla_0$ too, because it only differs from $\nabla$ by elementary modifications at points $q_j \neq t_i$. 
As the determinant of a $2\times 2$ matrix is a quadratic expression of the eigenvalues, we see that $\beta$ must be a quadratic differential with poles of order at most $2m_i$ at $t_i$. 
Over $B$, a similar argument shows that $\beta$ has poles of order at most $2$. 

Let us fix local coordinate charts $z_i$ centered at the pole $t_i$. 
One may then expand $\beta$ into Laurent series: 
$$
  \beta = \left( \beta_{i, -2m_i} z_i^{-2m_i} + \cdots + \beta_{i, -2} z_i^{-2} + O(z_i^{-1}) \right) (\operatorname{d}\! z_i)^{\otimes 2}. 
$$
Notice that for given $\beta$ the coefficient $\beta_{i, -2}$ is independent of the chosen coordinate chart $z_i$, however the other coefficients depend on $z_i$. 
We also fix local coordinate charts $z_j$ centered at the apparent singularity $q_j$, 
and have a similar expansion 
$$
  \beta = \left( \beta_{j, -2} z_j^{-2} + \beta_{j, -1} z_j^{-1} + O(z_j^0) \right) (\operatorname{d}\! z_j)^{\otimes 2}. 
$$
Analogously to Lemma~\ref{lem:delta}, we therefore find 
\begin{lem}\label{2023_7_11_12_53}
\begin{enumerate}
\item The coefficients $\beta_{i, -2m_i}, \ldots , \beta_{i, -2}$ 
are uniquely determined by the irregular curve with residues $X$ (and the holomorphic coordinate $z_i$); 
\item We have $\beta_{j, -2} =0$. 
\item $\beta$ is determined by the irregular curve with residues $X$ up to adding a section of $(\Omega^1_C)^{\otimes2}(D)$. 
\end{enumerate}
\end{lem}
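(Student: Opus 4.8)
The plan is to prove the three assertions of Lemma~\ref{2023_7_11_12_53} in close analogy with the proof of Lemma~\ref{lem:delta}, exploiting the fact that $\beta$ is (up to the identification coming from the companion normal form) the lower-order data of the determinant of the characteristic polynomial of $\nabla$. First I would make precise the relation between $\beta$ and the characteristic polynomial. Since $\nabla_0 = \begin{pmatrix}\operatorname{d} & \beta \\ 1 & \delta\end{pmatrix}$ is a companion-type matrix, the coefficient $\beta$ governs the constant term of the characteristic polynomial, which by the trace/determinant relation equals (a twist of) $\det(\nabla)$ minus the contribution of $\operatorname{tr}(\nabla_0)\cdot\operatorname{d}$. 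Because the elementary modifications relating $\nabla$ and $\nabla_0$ occur only at the points $q_j$, the polar parts of $\det(\nabla_0)$ and $\det(\nabla)$ at $t_i$ agree, and the former are fixed by the spectral data $\omega_i(X)$ through Definition~\ref{2023_7_13_16_05}. This is exactly what yields assertion~(1): expanding the determinant of the fixed local model $\operatorname{d} + \omega_i(X)$ in the coordinate $z_i$, the coefficients $\beta_{i,-2m_i}, \ldots, \beta_{i,-2}$ are polynomial expressions in the entries $\theta_{i,-m_i}, \ldots, \theta_{i,-2}$ and in $z_i$, hence determined by $X$.

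For assertion~(2), I would argue that the pole of $\beta$ at an apparent singularity $q_j$ is only of order one rather than two, i.e. the leading coefficient vanishes. The conceptual reason is that $q_j$ is, by construction, an \emph{apparent} singularity: by Remark~\ref{2023_7_13_16_34} and the definition of $B = \operatorname{div}(\varphi_\nabla)$, the connection $\nabla$ itself is regular at $q_j$, and the extra pole of $\nabla_0$ there arises purely from the elementary modification, which is logarithmic. More concretely, the local behaviour at $q_j$ is governed by Lemma~\ref{lem:delta}(2): the polar part of $\delta$ over $B$ is logarithmic with residue $+1$. Feeding this into the companion normal form, the characteristic polynomial of $\nabla_0$ near $q_j$ has a simple (logarithmic) singularity with controlled residue, and comparison of the $z_j^{-2}$ coefficients forces $\beta_{j,-2} = 0$. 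I expect this to be the main obstacle, as it requires unpacking precisely what an apparent singularity imposes on the off-diagonal entry $\beta$ in the companion gauge, and matching the local expansions of $\nabla$ and $\nabla_0$ through the elementary modification; the residue $+1$ bookkeeping from Lemma~\ref{lem:delta}(2) must be tracked carefully.

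Finally, assertion~(3) follows by the same comparison principle used in Lemma~\ref{lem:delta}(3). Let $\beta, \beta'$ be the $(1,2)$-entries of companion normal forms of two connections $\nabla, \nabla'$ over the same irregular curve with residues $X$. By assertions~(1) and~(2), the principal polar parts of $\beta$ and $\beta'$ coincide at every point of $D$ (they are determined by $X$), and at the apparent singularities the pole is at most simple. Hence the difference $\beta - \beta'$ is a section of $(\Omega^1_C)^{\otimes 2}$ whose only possible poles are simple poles along $D$, with no double poles; that is, $\beta - \beta'$ is a section of $(\Omega^1_C)^{\otimes 2}(D)$. This gives the stated freedom, and by $\dim_{\mathbb{C}} H^0(C, (\Omega^1_C)^{\otimes 2}(D))$ computed via Riemann--Roch one could, as in the remark following Lemma~\ref{lem:delta}, count the corresponding free parameters — though that dimension count is not part of the statement itself and I would relegate it to a subsequent remark.
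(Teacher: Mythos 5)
Your proposal follows the paper's own proof in all essentials: part (1) because $\beta$ is (minus) the determinant of the connection matrix, hence homogeneous quadratic in the eigenvalues fixed by $X$; part (2) because the apparent-singularity condition forces one eigenvalue of $\mathrm{res}_{q_j}(\nabla_0)$ to vanish, so the product of the residue eigenvalues --- which is exactly the leading coefficient $\beta_{j,-2}$ --- is zero; and part (3) by combining the first two as in Lemma~\ref{lem:delta}. The step you flag as the main obstacle is resolved in the paper by precisely the observation you anticipate (eigenvalues $0$ and $1$ after the positive elementary transformation), so there is no gap relative to the paper's argument.
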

\begin{proof}
The coefficients $\beta_{i, -2m_i}, \ldots , \beta_{i, -2}$ all admit homogeneous quadratic expressions in terms of the eigenvalues of $\boldsymbol{\theta}_{i}, \boldsymbol{\theta}_{\mathrm{res}}$, therefore they are determined by them. 
Conversely, the coefficients $\beta_{i, -2m_i}, \ldots , \beta_{i, -2}$ determine the polar part of the eigenvalues. 
It is classical that for an apparent singularity of $\nabla_0$, one of the two eigenvalues of the residue must vanish. 
This implies that for every $q\in B$ the product of the eigenvalues of $\operatorname{res}_q(\nabla_0 )$ vanishes. 
As this latter product gives the leading (second) order term $\beta_{j, -2}$, we get the second assertion. 
The last part follows from the first two as in Lemma~\ref{lem:delta}. 
\end{proof}

As a consequence of the lemma and by $\operatorname{dim}_{\mathbb{C}} H^0(C, (\Omega^1_C)^{\otimes2}(D)) = 3g-3+n$, the possible values for $\beta$ represent  $3g-3+n$ free parameters for a connection $\nabla$ on $X$ having apparent singularities at a fixed reduced divisor $B$ of length $N$. 

From now on, we set $\beta_{j, -1} = \zeta_j$, so that we have the expansion 
\begin{equation}\label{eq:expansion_beta}
  \beta = \zeta_j \frac{(\operatorname{d}\! z_j)^{\otimes 2}}{z_j} + \beta^{(j)}  
\end{equation}
for some local holomorphic quadratic differential $\beta^{(j)}$. 
Notice that $\zeta_j$ depends on the coordinate $z_j$, however the element $\zeta_j \operatorname{d}\! z_j \in \Omega_C^1\vert_{q_j}$ of the fiber of the holomorphic cotangent (or canonical) bundle over $q_j$ does not depend on it. 
As a matter of fact, since $\beta$ belongs to an affine space modelled over $H^0(C, (\Omega^1_C)^{\otimes2}(D))$ (and in order to be consistent with the  decomposition~\eqref{eq:decomposition}), it is even more rigourous to consider 
$\zeta_j \operatorname{d}\! z_j$ as elements of the fiber $\Omega^1_C (D)\vert_{q_j}$, 
using the inclusion $\Omega^1_C \subset \Omega^1_C(D)$. 
In the sequel we will consider them to be such elements. 
It will turn out that these quantities $\zeta_j \operatorname{d}\! z_j$ are closely related to accessory parameters. 

\subsection{Determination of $\beta$ and $\delta$ in terms of $\zeta$}

Fix a reduced divisor $B$ of length $N$ on $C$ with support disjoint from $D$. 
In Subsections~\ref{subsec:delta},~\ref{subsec:beta} we have found that (normal forms of) meromorphic connections with residue on $X$ that have apparent singularities at $B$ can be described by an affine space of complex dimension $g + 3g-3+n = N$ ($g$ coming from the choice of $\delta$ and $3g-3+n$ from the choice of $\beta$). 
In this section, we provide a description of such connections in terms of analogs of separated variables. 
Namely, it will turn out that generically the data of $\delta, \beta$ is 
equivalent to the $N$-tuple 
$(\zeta_1 \operatorname{d}\! z_1, \ldots, \zeta_N \operatorname{d}\! z_N)$. 

The fact that singular points are apparent over $B$ imposes further constraints on 
$\beta$ and $\delta$. 
This constraint gives $1$ linear condition for each point $q_j$ 
and we can expect that these  constraints fix $\beta$ and $\delta$ uniquely 
in terms of the data $(q_j,\zeta_j \operatorname{d}\! z_j)_{j=1}^N$. 
In fact, this is true for the genus $g=0$ case (see \cite{DiarraLoray}) and we 
will show in Lemma~\ref{lem:independence} that this is also true for 
\emph{generic} choices of $(q_j,\zeta_j \operatorname{d}\! z_j)_{j=1}^N$ if $g>0$.


In fact, the data of $\zeta_j \operatorname{d}\! z_j$ can be interpreted as a certain 
quasi-parabolic structure over $B$.
Indeed, at a point $q_j$ and with respect to the decomposition~\eqref{eq:decomposition}, 
the residue of $\nabla_0$ reads as 
$$
\mathrm{res}_{q_j}\nabla_0=\begin{pmatrix} 0 & \zeta_j \operatorname{d}\! z_j\\
0 & 1 \end{pmatrix} .
$$
So, the vector $\begin{pmatrix}  \zeta_j \operatorname{d}\! z_j\\ 1 \end{pmatrix}$ is an eigenvector with respect to eigenvalue $1$ and the map $\phi_\nabla$ (see~\eqref{eq:phi_nabla}) is just the positive elementary transformation with respect to these parabolic directions at all points $q_j$. 
In summary, the data of all values $\zeta_j \operatorname{d}\! z_j$ is 
equivalent to the data of a quasi-parabolic structure of $E_0$ over $B$ (i.e., a line in the fiber of $E_0$ over each $q_j$) distinct from the destabilizing subbundle 
$\mathcal O_C\subset E_0$ for every $j$.

Let us denote by $\boldsymbol{\Omega}(D)$ the total space  of the line bundle 
$\Omega^1_C(D)$. 

\begin{lem}\label{lem:independence} For generic data 
$(q_j,\zeta_j\operatorname{d}\! z_j)_j\in \mathrm{Sym}^{4g-3+n}(\boldsymbol{\Omega}(D))$
there exist unique $\beta$ and $\delta$ as above such that the corresponding $\nabla_0$ 
has apparent singular points at all the points $q_j$ ($1\leq j \leq N:= 4g-3+n$), and such that the Laurent  expansion~\eqref{eq:expansion_beta} is fulfilled. 
\end{lem}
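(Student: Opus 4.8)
The plan is to reduce the statement to the generic non-degeneracy of a square linear system. Fix a reduced divisor $B=q_1+\cdots+q_N$ of length $N=4g-3+n$ with support disjoint from $D$, together with residue data $\zeta_1\operatorname{d}\! z_1,\ldots,\zeta_N\operatorname{d}\! z_N$. By Lemma~\ref{lem:delta} and Lemma~\ref{2023_7_11_12_53}, the polar part of $\delta$ over $D+B$ is fixed (logarithmic with residue $+1$ over $B$), the polar part of $\beta$ over $D$ is fixed by $X$, the residues of $\beta$ over $B$ are prescribed to be the $\zeta_j$, and $\beta$ acquires no double pole on $B$. Hence the pairs $(\beta,\delta)$ compatible with these constraints form an affine space $V$ modelled on $H^0(C,\Omega^1_C)\oplus H^0(C,(\Omega^1_C)^{\otimes2}(D))$, of dimension $g+(3g-3+n)=N$. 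Imposing that each $q_j$ be an apparent (rather than merely logarithmic) singular point of $\nabla_0$ is, as recalled below, a single affine-linear condition on $V$. We thus obtain $N$ conditions on the $N$-dimensional space $V$, and the assertion is equivalent to the invertibility, for generic $(q_j,\zeta_j\operatorname{d}\! z_j)_j$, of the associated $N\times N$ linear system.

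First I would make the $j$-th condition explicit. Since $\nabla_0$ is in companion form~\eqref{eq:normal_form}, it is cyclic, hence locally near $q_j$ equivalent to a scalar second-order equation with a regular singular point whose exponents are the eigenvalues $0$ and $1$ of $\operatorname{res}_{q_j}\nabla_0$. These exponents differ by the positive integer $1$, so the point is resonant and a priori logarithmic; by the classical Frobenius criterion it is apparent precisely when the obstruction to prolonging the lower Frobenius solution past the resonance vanishes. This obstruction is a single linear expression in the sub-leading ($z_j^0$-order) coefficients of $\beta$ and $\delta$ at $q_j$, with coefficients depending holomorphically on $(q_j,\zeta_j)$; denote by $\ell_j\colon V\to\mathbb{C}$ its linear part. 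Because the perturbation directions $\beta_0\in H^0(C,(\Omega^1_C)^{\otimes2}(D))$ and $\delta_0\in H^0(C,\Omega^1_C)$ are holomorphic at $q_j$, the functional $\ell_j(\beta_0,\delta_0)$ is a fixed linear combination of the values $\beta_0(q_j)$ and $\delta_0(q_j)$; assembling these over a common line bundle $\mathcal{L}$ on $C$, I would record $\ell_j(\beta_0,\delta_0)$ as the value at $q_j$ of a section $\sigma_{(\beta_0,\delta_0)}\in H^0(C,\mathcal{L})$ depending $\mathbb{C}$-linearly on $(\beta_0,\delta_0)$.

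With this in hand, existence and uniqueness for generic data reduce to showing that the functionals $\ell_1,\ldots,\ell_N$ are linearly independent for generic $(q_j,\zeta_j)$. For a square system this is equivalent to injectivity of the homogeneous problem: if $(\beta_0,\delta_0)\in H^0(C,\Omega^1_C)\oplus H^0(C,(\Omega^1_C)^{\otimes2}(D))$ lies in $\bigcap_j\ker\ell_j$, then the section $\sigma_{(\beta_0,\delta_0)}$ vanishes at all $N$ generic points $q_j$. The key linear-algebra fact is that if the assignment $(\beta_0,\delta_0)\mapsto\sigma_{(\beta_0,\delta_0)}$ is injective, then the $N$ sections obtained from a basis of $V$ are linearly independent in $H^0(C,\mathcal{L})$, and linearly independent sections cannot all vanish simultaneously at generic points; hence $\bigcap_j\ker\ell_j=0$ for generic $(q_j,\zeta_j)$, giving uniqueness, and---the system being square---existence as well. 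This argument is consistent with, and recovers, the genus $0$ computation of Diarra--Loray~\cite{DiarraLoray}.

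The main obstacle is precisely the injectivity of $(\beta_0,\delta_0)\mapsto\sigma_{(\beta_0,\delta_0)}$, i.e. ruling out a nonzero holomorphic pair whose associated section vanishes identically; this is where the global geometry of $C$ and the non-resonance hypotheses of Definition~\ref{2023_7_14_23_01} must enter, since for special curves or spectral data the conditions could degenerate. I expect to establish it either by identifying $\mathcal{L}$ explicitly and computing the map on cohomology, or, more robustly, by exhibiting a single configuration $(q_j,\zeta_j\operatorname{d}\! z_j)_j$ at which the $N\times N$ matrix is invertible---for instance by specializing the $\zeta_j$ and letting the $q_j$ approach a convenient position---after which the non-vanishing of the determinant as a regular function on $\mathrm{Sym}^N(\boldsymbol{\Omega}(D))$ propagates to the generic point. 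A degeneration reducing the count to the known $g=0$ case, or the use of the spectral (BNR) correspondence developed later in the paper, provides an alternative route to the same non-vanishing.
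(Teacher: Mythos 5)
Your reduction of the statement to the generic invertibility of an $N\times N$ affine-linear system on the space of admissible pairs $(\beta,\delta)$ is exactly the paper's setup, and your fallback strategy (exhibit one configuration where the determinant is nonzero, then propagate by genericity of a nonvanishing determinant function) is precisely the paper's method. However, the decisive step is not actually carried out, and the primary mechanism you offer in its place is flawed. The linear part of the $j$-th condition is $\ell_j(\beta_0,\delta_0)=\beta_0(q_j)-\zeta_j\operatorname{d}\!z_j\cdot\delta_0(q_j)$, which depends on the fiber datum $\zeta_j$ and not only on the point $q_j$. Hence there is no single section $\sigma_{(\beta_0,\delta_0)}$ of a line bundle \emph{on $C$} whose value at $q_j$ computes $\ell_j$ for every $j$: that would force the $\zeta_j\operatorname{d}\!z_j$ to be the values at the $q_j$ of one global section of $\Omega^1_C(D)$, and in exactly that degenerate situation the determinant \emph{vanishes} for $g>0$ --- this is the content of the lemma immediately following Lemma~\ref{lem:independence} in the paper. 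So the ``key linear-algebra fact'' as you state it proves nothing here. The packaging can be repaired by moving to the total space: $\pi^*\beta_0-\zeta\cdot\pi^*\delta_0$ is a section of $\pi^*\bigl((\Omega^1_C)^{\otimes2}(D)\bigr)$ over $\boldsymbol{\Omega}(D)$, the assignment $(\beta_0,\delta_0)\mapsto\pi^*\beta_0-\zeta\cdot\pi^*\delta_0$ is visibly injective (restrict to the zero section to kill $\beta_0$, then $\delta_0$), and $N$ linearly independent sections on an irreducible variety have invertible evaluation matrix at $N$ generic points. With that correction your first route would give a clean, slightly different proof.

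As written, though, the burden falls on your second route, which you explicitly defer (``I expect to establish it either by \dots or \dots''); this deferred computation is the entire content of the paper's argument. Concretely, the paper writes $\beta=\beta_0+\sum_k b_k\nu_k$, $\delta=\delta_0+\sum_l d_l\omega_l$ for bases $(\nu_k)$ of $H^0(C,(\Omega^1_C)^{\otimes2}(D))$ and $(\omega_l)$ of $H^0(C,\Omega^1_C)$, specializes $\zeta_1=\cdots=\zeta_{N-g}=0$ and $\zeta_{N-g+1}=\cdots=\zeta_N=1$ so that the $N\times N$ matrix becomes block-triangular, and factors the determinant~\eqref{eq:determinant} into the two evaluation determinants $\det(\nu_k(q_j))_{1\le j\le N-g}$ and $\det(\omega_l(\tilde q_j))_{1\le j\le g}$; each is generically nonzero because a basis satisfies no linear relation, i.e.\ the evaluation image of $C$ is not contained in a hyperplane. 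Without either the corrected total-space argument or this explicit specialization, the generic nonvanishing of the determinant --- hence both existence and uniqueness of $(\beta,\delta)$ --- is asserted but not proved. A minor further point: you should also check that the affine space $V$ is nonempty, i.e.\ that some pair with the prescribed principal parts over $D+B$ exists; the paper obtains a base point by taking the companion normal form of a connection whose apparent divisor is $q_1+\cdots+q_N$.
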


\begin{proof} 
Let us consider $(q_j,\zeta_j\operatorname{d}\! z_j)_j$ 
such that $q_j$'s are pair-wise distinct, and 
do not intersect the support of $D$. Given one point $(q_j,\zeta_j\operatorname{d}\! z_j)$, 
we can diagonalize the 
residue $\mathrm{res}_{q_i}\nabla_0$ by conjugating by a triangular matrix
\begin{equation}\label{diag_by_conj_by_triangular}
\begin{pmatrix}  1 & \zeta_j \operatorname{d}\! z_j\\ 0& 1 \end{pmatrix}^{-1}
\begin{pmatrix}  0 & \beta\\ 1 & \delta \end{pmatrix}
\begin{pmatrix}  1 & \zeta_j\operatorname{d}\! z_j\\ 0 & 1 \end{pmatrix}
+\begin{pmatrix}  1 & \zeta_j\operatorname{d}\! z_j\\ 0& 1 \end{pmatrix}^{-1}
d\begin{pmatrix}  1 & \zeta_j\operatorname{d}\! z_j\\ 0 & 1 \end{pmatrix}
\end{equation}
$$
=\begin{pmatrix}   - \zeta_j \operatorname{d}\! z_j 
& \beta-\zeta_j  \delta \otimes \operatorname{d}\! z_j
-\zeta_j^2 \operatorname{d}\! z_j^{\otimes 2} \\ 1 
& \delta+\zeta_j\operatorname{d}\! z_j \end{pmatrix}
=\begin{pmatrix}  0 & 0\\ 0 & \frac{dz_j}{z_j} \end{pmatrix}+\text{holomorphic}$$
where $z_j$ stands for a local coordinate at $q_j$. Then the elementary transformation $\phi_\nabla$
is locally equivalent to the conjugacy by $\begin{pmatrix}  1 & 0\\ 0 & z_j^{-1} \end{pmatrix}$ yielding
\begin{equation}\label{conn_matrix_at_qj}
\begin{pmatrix}   - \zeta_j \operatorname{d}\! z_j
& \frac{\beta- \zeta_j\delta\otimes \operatorname{d}\! z_j-\zeta_j^2 \operatorname{d}\! z_j^{\otimes 2}}{z_j} \\ 
z_j  & \delta+\zeta_j\operatorname{d}\! z_j-\frac{dz_j}{z_j} \end{pmatrix}.
\end{equation}
The apparent point condition is therefore equivalent to saying that 
$\beta-\zeta_j\delta\otimes \operatorname{d}\! z_j 
-\zeta_j^2\operatorname{d}\! z_j^{\otimes 2}$ is (holomorphic and) {\bf vanishing} at $q_j$.
This condition is linear on $\beta$ and $\delta$ and rewrites
\begin{equation}\label{eq:apparent}
\underbrace{\beta-\zeta_j\delta \otimes \operatorname{d}\! z_j}_\text{holomorphic}\vert_{q_j}
= \zeta_j^2\operatorname{d}\! z_j^{\otimes 2}|_{q_j},
\end{equation}
where the right hand side does not involve $\beta$ and $\delta$. 
If we assume that $(q_1, \ldots, q_N)$ lies in the image of the map $\operatorname{App}$ (see~\eqref{eq:App}), then the normal form of any $(E,\nabla )$ in the preimage produces a solution $(\delta_0, \beta_0)$. 
Fixing such solutions, by Lemmas~\ref{lem:delta},~\ref{2023_7_11_12_53} we may rewrite
$$\left\{\begin{matrix}
\beta&=& \beta_0+b_1\nu_1+\cdots+b_{N-g}\nu_{N-g} \hfill \\
\delta&=& \delta_0+d_1\omega_1+\cdots+d_g\omega_g
\end{matrix}\right.$$
where $(\omega_l)_{l=1}^g$, $(\nu_k)_{k=1}^{N-g}$ are respective bases of  $H^0(C,\Omega^1_C)$ and $H^0(C,(\Omega^1_C)^{\otimes2}(D))$. 
Using these expressions, the constraint that $q_j$ is an apparent singularity can be  rewritten as a linear system consisting of $N$ equations in the $N$ variables $b_k$, $d_l$.
The condition to uniquely determine $\beta$ and $\delta$ in terms of the data $(q_j,\zeta_j \operatorname{d}\! z_j)$ 
is that the following determinant does not vanish
\begin{equation}\label{eq:determinant}
\det\begin{pmatrix}  
\nu_1(q_1)& \cdots& \nu_{N-g}(q_1) & \zeta_1\operatorname{d}\! z_1 \omega_1(q_1)&\cdots& \zeta_1 \operatorname{d}\! z_1 \omega_g(q_1) \\
\vdots & \ddots & \vdots & \vdots & \ddots & \vdots \\
\nu_1(q_{N})& \cdots& \nu_{N-g}(q_{N}) & \zeta_{N}\operatorname{d}\! z_N \omega_1(q_{N})&\cdots& \zeta_{N}\operatorname{d}\! z_N \omega_g(q_{N}) 
\end{pmatrix}
\end{equation}
Of course, it is sufficient for our purpose to check that 
we can find some  $(q_j,\zeta_j \operatorname{d}\! z_j)$'s such that 
this determinant does not vanish, so that it will be generically non vanishing. 
If we set $\zeta_1=\cdots=\zeta_{N-g}=0$, then the matrix has a zero block of dimension $(N-g)\times g$ in the top right corner, and the determinant factors as 
$$\zeta_{N-g+1}\operatorname{d}\! z_{N-g+1} \cdots \zeta_{N}\operatorname{d}\! z_N\cdot 
\det\begin{pmatrix}  
\nu_1(q_1)& \cdots& \nu_{N-g}(q_1) \\
\vdots & \ddots & \vdots  \\
\nu_1(q_{N-g})& \cdots& \nu_{N-g}(q_{N-g})
\end{pmatrix}
\cdot   
\det\begin{pmatrix}  
\omega_1(\tilde q_{1})&\cdots& \omega_g(\tilde q_1) \\
 \vdots & \ddots & \vdots \\
\omega_1(\tilde q_g)&\cdots& \omega_g(\tilde q_g) 
\end{pmatrix}$$
where $\tilde q_j=q_{j+N-g}$. After setting $\zeta_{N-g+1}=\cdots=\zeta_{N}=1$,
it is enough to find $q_j$'s such that the two smaller determinants are non zero.
To conclude the proof, let us denote by $L$ any of the two lines bundles $\Omega^1_C$ or
$(\Omega^1_C)^{\otimes2}(D)$, and by $\mu_1,\ldots,\mu_{N'}$ a corresponding basis of
$H^0(C,L)$. Then we want to prove that the image of the curve by the evaluation map
$$C\stackrel{\text{ev}}{\longrightarrow} \mathbb P^{N'-1}\ ;\ 
q\mapsto (\mu_1(q):\ldots:\mu_{N'}(q))$$
is not contained in some hyperplane, i.e. that we can find $q_1,\ldots,q_{N'}\in C$ such that
the image is not contained in some hyperplane. But this is true, otherwise, we would have 
a linear relation between $\mu_1,\ldots,\mu_{N'}$ contradicting that they form a basis.
\end{proof}

\begin{remark} In the previous proof, the locus of $q_j$'s for which 
$\det(\omega_i(\tilde q_j))_{i,j}$
vanishes correspond to the Brill-Noether locus for divisor $\tilde q_1+\cdots+\tilde q_g$.
\end{remark}

\begin{lem} When $g=0$, any data 
$(q_j,\zeta_j\operatorname{d}\! z_j)_j\in \mathrm{Sym}^{n-3}(\boldsymbol{\Omega}(D))$
gives rise to unique $\beta$ and $\delta$ such that the corresponding $\nabla_0$ 
has apparent singular points at all $q_j$'s. However, for $g>0$, there always exist 
data $(q_j,\zeta_j\operatorname{d}\! z_j)_j$ such that the determinant (\ref{eq:determinant}) vanishes.
\end{lem}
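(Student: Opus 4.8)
The plan is to prove the two assertions by entirely different mechanisms: the $g=0$ statement by an explicit Vandermonde computation, and the $g>0$ statement by exhibiting degenerate configurations in \eqref{eq:determinant}.

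For $g=0$ we have $H^0(C,\Omega^1_C)=0$, so the matrix in \eqref{eq:determinant} contains no $\omega$-columns and reduces to the $N\times N$ matrix $\big(\nu_k(q_j)\big)_{1\le j,k\le N}$ with $N=n-3$, where $\nu_1,\dots,\nu_N$ is a basis of $H^0(\mathbb{P}^1,(\Omega^1_{\mathbb{P}^1})^{\otimes 2}(D))$. First I would identify this space with $H^0(\mathbb{P}^1,\mathcal{O}(n-4))$: choosing an affine coordinate $x$ with $\infty\notin D\cup\{q_1,\dots,q_N\}$, every section has the shape $\tfrac{P(x)}{\prod_i(x-t_i)^{m_i}}(\operatorname{d}\!x)^{\otimes 2}$ with $\deg P\le n-4$, so that $\nu_k=\tfrac{x^{k-1}}{\prod_i(x-t_i)^{m_i}}(\operatorname{d}\!x)^{\otimes 2}$ for $k=1,\dots,N$ is a basis. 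Evaluating at the $q_j$ and pulling the nonzero scalars $\prod_i(q_j-t_i)^{-m_i}$ and the local frame $(\operatorname{d}\!x)^{\otimes 2}|_{q_j}$ out of each row, the determinant equals, up to a nonzero factor, the Vandermonde determinant $\prod_{j<j'}(q_{j'}-q_j)$. Since admissible data are reduced and disjoint from $D$, the $q_j$ are pairwise distinct and avoid the $t_i$, so this is nonzero for every datum. Thus the linear system \eqref{eq:apparent} has a unique solution in all cases (here $\delta$ carries no free parameter and is already fixed by Lemma~\ref{lem:delta}), proving the first claim.

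For $g>0$ it is enough to produce one admissible datum making \eqref{eq:determinant} vanish. The cheapest choice is $\zeta_1=\cdots=\zeta_N=0$: then the last $g$ columns of \eqref{eq:determinant} vanish identically and the determinant is zero for any pairwise distinct $q_j$ off $D$, which already proves the claim. To exhibit the phenomenon more geometrically I would expand the determinant by Laplace along the last $g$ columns, obtaining a polynomial $P(\zeta_1,\dots,\zeta_N)$ whose monomials are the squarefree products $\prod_{j\in S}\zeta_j$ over $g$-element subsets $S$, with coefficient (up to sign) an $(N-g)\times(N-g)$ minor of the $\nu$-block on the complementary rows times the Brill--Noether minor $\det\big(\omega_l(q_j)\big)_{j\in S}$. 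This $P$ has no constant term, so it vanishes at $\zeta=0$; and for generic $q_j$ at least two of its coefficients are nonzero, so $P$ is a genuine, nonconstant sum of monomials whose zero locus is a hypersurface meeting the open set $\{\zeta_j\neq 0\ \forall j\}$. Hence there are even degenerate data with all $\zeta_j\neq 0$. For $g\ge 2$ one may instead keep the factorized specialization of Lemma~\ref{lem:independence} and place $\widetilde q_1,\dots,\widetilde q_g$ on the Brill--Noether locus, choosing them so that a nonzero holomorphic $1$-form vanishes at all of them (equivalently $h^0(\widetilde q_1+\cdots+\widetilde q_g)\ge 2$); this kills the factor $\det(\omega_l(\widetilde q_j))$. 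Such configurations exist because, for $g\ge2$, a reduced effective canonical divisor of degree $2g-2\ge g$ contains a degree-$g$ sub-divisor $\widetilde q_1+\cdots+\widetilde q_g$ whose complementary divisor is effective.

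The main obstacle is the case $g=1$, where the unique holomorphic $1$-form is nowhere vanishing, so the Brill--Noether minor $\det(\omega_1(\widetilde q_1))$ never vanishes and the factorization route collapses. This is precisely why I would base the argument on the polynomial $P$ rather than on Brill--Noether theory alone: for $g=1$, $P$ is a single nonzero linear form $\sum_j c_j\zeta_j$ (generically with all $c_j\neq 0$), which always has nonzero solutions, so degenerate data persist. The remaining points—that for generic $q_j$ the pertinent $\nu$- and $\omega$-minors are nonzero, and that the zero locus of $P$ is not contained in the coordinate hyperplanes—are routine genericity checks of the same type as in the proof of Lemma~\ref{lem:independence}.
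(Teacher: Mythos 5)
Your proof is correct, and it is worth recording how it differs from the paper's. For $g=0$ the paper simply cites \cite{DiarraLoray} with the remark that the claim is ``a consequence of Lagrange interpolation''; your Vandermonde computation with the monomial basis $\nu_k=\tfrac{x^{k-1}}{\prod_i(x-t_i)^{m_i}}(\operatorname{d}\!x)^{\otimes 2}$ is exactly that interpolation argument written out, so it is the same idea made self-contained. For $g>0$ the paper's construction is to fix generic $q_j$, pick $\omega\in H^0(C,\Omega^1_C(D))$ and set $\zeta_j:=\omega(q_j)$: then the column $\bigl(\zeta_j\operatorname{d}\!z_j\,\omega_g(q_j)\bigr)_j$ is the evaluation of $\omega\otimes\omega_g\in H^0(C,(\Omega^1_C)^{\otimes 2}(D))$ at the $q_j$ and hence a linear combination of the $\nu$-columns. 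Your ``cheapest choice'' $\zeta_1=\cdots=\zeta_N=0$ is the degenerate instance $\omega=0$ of this: it is a legitimate point of $\mathrm{Sym}^N(\boldsymbol{\Omega}(D))$ (the quasi-parabolic direction $(0,1)^T$ is still transverse to $\mathcal O_C$), it kills the last $g$ columns outright, and it already proves the literal statement in one line. What the paper's choice with $\omega\neq 0$ buys is an example with $\zeta_j\not\equiv 0$ and, more importantly, the geometric interpretation of the degenerate locus: $\zeta_j=\omega(q_j)$ is precisely the condition that the parabolic directions lie on the subbundle $(\Omega^1_C(D))^{-1}\subset E_0$, which after the elementary transformations produces a destabilizing subbundle (compare Lemma~\ref{lem:unstabledet} in the elliptic case). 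Your supplementary Laplace-expansion analysis of the multilinear degree-$g$ polynomial $P(\zeta)$ goes beyond the statement and correctly establishes the stronger fact that degenerate data exist with all $\zeta_j\neq 0$ (including for $g=1$, where the Brill--Noether factor never vanishes); the only loose end there, which you flag yourself, is the claim that at least two coefficients of $P$ are nonzero for generic $q_j$ so that its zero locus is not contained in the coordinate hyperplanes --- this does follow from the same evaluation-map genericity used in Lemma~\ref{lem:independence}, applied to the maximal minors of the $\nu$-block, but it should be said. None of this is needed for the lemma as stated.
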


\begin{proof} When $g=0$, this directly follows from \cite{DiarraLoray} 
(a consequence of Lagrange interpolation).
When $g>0$, fix generic $q_j$'s and let $\omega\in H^0(C,\Omega^1_C(D))$. 
If we set $\zeta_j:=\omega(q_j)$, 
then the last colum of (\ref{eq:determinant}) is just the evaluation of 
the section $\omega\otimes\omega_g\subset H^0(C,(\Omega^1_C)^{\otimes2}(D))$ 
at $q_1,\cdots,q_{4g-3+n}$
and is therefore a linear combination of the $3g-3+n$ first colums.
\end{proof}

\section{Symplectic structure and canonical coordinates}\label{Sect:SymplecticGL2andCano}

We fix an irregular curve with residues $X=(C,D,\{ z_i \}_{i \in I} , \{\boldsymbol{\theta}_{i} \}_{i \in I}, \boldsymbol{\theta}_{\mathrm{res}})$. 
As usual, we use the notation $N:= 4g+n-3$, where $g$ is the genus of $C$ and $n = \deg (D)$.
We will consider the moduli space $M_{X}$ of rank 2 meromorphic connections over $X$.
This moduli space is 
constructed in \cite[Theorem 2.1]{IS} and carries a natural symplectic structure described in \cite[Proposition 4.1]{IS}. 
The purpose of this section is to give canonical coordinates on an open subset of $M_{X}$ with respect to this symplectic structure. 
First we describe the moduli space $M_{X}$.

\subsection{Moduli spaces}\label{subsect_moduli}

Let $(E,\nabla)$ be a rank 2 meromorphic connection over $X$.
Then, the subsheaf 
$$
l^{(i)} := \varphi_{m_i[t_i]}^{-1}(\mathcal{O}_{m_i[t_i]} \oplus 0)
\subset E_{m_i[t_i]}.
$$
equips $(E,\nabla)$ with a canonical quasi-parabolic structure at each $t_i$.  
So we may consider $(E,\nabla)$ as a {\it quasi-parabolic connection}
$(E,\nabla ,  \{l^{(i)}\} )$
defined in \cite[Definition 1.1]{Ina} and \cite[Definition 2.1]{IS}.
A stability condition for 
quasi-parabolic connections
is introduced in \cite[Definition 2.1]{Ina} and \cite[Definition 2.2]{IS}.
The moduli space of 
stable quasi-parabolic connections
is constructed in \cite[Theorem 2.1]{Ina} and \cite[Theorem 2.1]{IS}.
In our situation, any rank $2$ meromorphic connections over $X$ 
are irreducible (see Remark \ref{2023_7_13_16_34}).
So our objects are automatically stable objects.
We omit the stability condition of the quasi-parabolic connections.

Let $M_{X}$ be the moduli space of rank 2 meromorphic connections over the irregular curve with residues $X$.
If $(E,\nabla  ) \in M_{X}$ satisfies $\dim_{\mathbb{C}} H^0(C,E)=1$, then we have a unique $\mathcal{O}_C$-morphism $\varphi_\nabla$ in~\eqref{2023_7_11_12_46}. 
The $\mathcal{O}_C$-morphism $\varphi_\nabla$ is nonzero, since $(E,\nabla )$ is irreducible.
So we may define the divisor $\mathrm{div}(\varphi_\nabla )$ in~\eqref{eq:B} for $(E,\nabla)$.
We set 
$$
M_{X}^0
:= \left\{ (E,\nabla ) \in M_X
\ \middle| \ 
\begin{array}{l}
\text{$\dim_{\mathbb{C}} H^0(C,E) =1$,}\\
\text{$\mathrm{div}(\varphi_\nabla )$ is reduced, and} \\
\text{$\mathrm{div}(\varphi_\nabla )$ has disjoint support with $D$}
\end{array}
\right\}.
$$
Next we recall the natural symplectic structure on $M_X$.

\subsection{Symplectic structure}\label{subsect:symplecticGL2}

We will describe the natural symplectic structure on $M_X$ 
via \v{C}ech cohomology.
This is defined in \cite[Proposition 7.2]{Ina} and \cite[Proposition 4.1]{IS}.
This is analog of the symplectic form on the moduli space of stable Higgs bundles in \cite{Bott}.
This description of the symplectic structure is useful 
to comparing this symplectic structure with
the Goldman symplectic structure on the character variety via 
the Riemann--Hilbert map 
(for example, see \cite[the proof of Proposition 7.3]{Ina} and \cite[Theorem 3.2]{Bis}).
Moreover,
this description of the symplectic structure is useful 
to describe the isomonodromic deformations 
(for example, see \cite[Proposition 4.3]{BHH}, 
\cite[Proposition 4.4]{BHH1}, and \cite[Proposition 3.8]{Kom0}).

First we recall the description of the tangent space of
$M_{X}$ 
at $(E,\nabla  ) \in M_{X}$
in terms of the hypercohomology of a certain complex
(\cite[the proof of Theorem 2.1]{Ina} and \cite[the proof of Proposition 4.1]{IS}).
We consider $(E,\nabla  )$ as a 
quasi-parabolic connection $(E,\nabla ,  \{l^{(i)}\} )$.
We define a complex $\cF^{\bullet}$ for $(E,\nabla ,  \{l^{(i)}\} )$ by 
\begin{equation}\label{2023_7_4_14_18}
\begin{aligned}
&\cF^0 := \left\{  s \in \cE nd (E)  
\ \middle| \  s |_{m_i t_i} (l^{(i)}) 
\subset l^{(i)}  \text{ for any $i$} \right\} \\
&\cF^1 :=  \left\{  s \in  \cE nd (E)\otimes \Omega^1_{C}(D)  
\ \middle| \    s|_{m_i t_i} (l^{(i)}) 
\subset l^{(i)} \otimes \Omega^1_C \text{ for any $i$}  \right\} \\
&\nabla_{\cF^{\bullet}} \colon \cF^0 \lra \cF^1; 
\quad \nabla_{\cF^{\bullet}} (s) = \nabla \circ s - s \circ \nabla.
\end{aligned}
\end{equation}
Then we have an isomorphism between 
the tangent space 
$T_{(E,\nabla ,  \{l^{(i)}\} )} M_{X}$
and $\bH^1(\mathcal{F}^{\bullet})$.

Now we recall this isomorphism.
We take an analytic (or affine) open 
covering $C = \bigcup_{\alpha} U_{\alpha}$ such that 
$E|_{U_{\alpha}} \cong \mathcal{O}^{\oplus 2}_{U_{\alpha}}$ for any $\alpha$,
$\sharp\{ i \mid t_i  \cap U_{\alpha} \neq \emptyset \} \le 1$ for any $\alpha$ and 
$\sharp\{ \alpha \mid t_i  \cap U_{\alpha} \neq \emptyset \} \le 1$ for any $i$.
Take a tangent vector 
$v \in T_{(E,\nabla ,  \{l^{(i)}\} )} M_{X}$.
The field $v$ corresponds to an infinitesimal deformation 
$(E_{\epsilon},\nabla_{\epsilon}, \{ l_{\epsilon}^{(i)} \})$
of $(E,\nabla ,  \{l^{(i)}\} )$
over $C \times \mathrm{Spec}\,\mathbb{C}[\epsilon]$
such that $(E_{\epsilon},\nabla_{\epsilon}, 
\{ l_{\epsilon}^{(i)} \}) \otimes \mathbb{C}[\epsilon]/(\epsilon) 
\cong ( E, \nabla, \{ l^{(i)} \})$,
where $\mathbb{C}[\epsilon]= \mathbb{C}[t]/(t^2)$.
There is an isomorphism
\begin{equation*}\label{equation isom verphi 1}
\varphi_{\alpha} \colon  E_{\epsilon}|_{U_{\alpha}\times \mathrm{Spec}\, \mathbb{C}[\epsilon] }
\xrightarrow{\sim} \mathcal{O}^{\oplus 2}_{U_{\alpha}\times \mathrm{Spec}\,  \mathbb{C}[\epsilon]} 
\xrightarrow{\sim} E|_{U_{\alpha}} \otimes \mathbb{C}[\epsilon]
\end{equation*}
such that $\varphi_{\alpha}\otimes \mathbb{C}[\epsilon]/(\epsilon ) 
\colon E_{\epsilon}\otimes \mathbb{C}[\epsilon]/(\epsilon)|_{U_{\alpha}} 
\xrightarrow{\sim} E|_{U_{\alpha}}\otimes \mathbb{C}[\epsilon]/(\epsilon)
=E|_{U_{\alpha}}$ 
is the given isomorphism and 
that $\varphi_{\alpha}|_{t_i\times \Spec \mathbb{C}[\epsilon]} (l_{\epsilon}^{(i)}) 
= l^{(i)}|_{U_{\alpha} \times \Spec \mathbb{C}[\epsilon]}$ 
if $t_i \cap U_{\alpha} \neq \emptyset$.
We put 
\begin{equation*}
\begin{aligned}
u_{\alpha\beta} :=&\ \varphi_{\alpha} \circ \varphi_{\beta}^{-1} 
- \mathrm{id}_{E|_{U_{\alpha\beta}\times \Spec \mathbb{C}[\epsilon]}},\\
v_{\alpha} :=&\ (\varphi_{\alpha}\otimes \mathrm{id}) \circ 
\nabla_{\epsilon} |_{U_{\alpha}\times \Spec \mathbb{C}[\epsilon]} \circ \varphi^{-1}_{\alpha} 
- \nabla|_{U_{\alpha}\times \Spec \mathbb{C}[\epsilon]}.
\end{aligned}
\end{equation*}
Then $\{ u_{\alpha\beta} \} \in C^1 ((\epsilon) \otimes \cF^0)$, 
$\{ v_{\alpha} \} \in C^0((\epsilon) \otimes \cF^1)$ and 
we have the cocycle conditions
\begin{equation*}
u_{\beta \gamma}-u_{\alpha \gamma} +u_{\alpha\beta}  = 0
\quad \text{and}
\quad \nabla\circ u_{\alpha\beta}-u_{\alpha\beta} \circ \nabla =  v_{\beta}-v_{\alpha}.
\end{equation*}
So $[(\{ u_{\alpha\beta} \},\{ v_{\alpha} \} )]$ determines an element  
of $\bH^1(\mathcal{F}^{\bullet})$.
This correspondence gives an isomorphism
between 
the tangent space 
$T_{(E,\nabla ,  \{l^{(i)}\} )} M_{X}$
and $\bH^1(\mathcal{F}^{\bullet})$.

We define a pairing
\begin{equation}\label{2020.11.7.15.48}
\begin{aligned}
\bH^1( \cF^{\bullet}) \otimes \bH^1(\cF^{\bullet}) 
&\lra \bH^2(\mathcal{O}_C \xrightarrow{d}\Omega_{C}^{1}) \cong \mathbb{C}\\
[(\{ u_{\alpha\beta} \}, \{ v_{\alpha} \})]\otimes[(\{ u_{\alpha\beta}' \}, \{ v_{\alpha}' \} )] &\longmapsto 
[ (\{  \mathrm{tr}( u_{\alpha\beta} \circ u_{\beta\gamma}') \}, -\{
 \mathrm{tr} (u_{\alpha\beta} \circ v_{\beta}') 
- \mathrm{tr} (v_{\alpha} \circ u'_{\alpha\beta}) \} )],
\end{aligned}
\end{equation}
considered in \v{C}ech cohomology with respect to an open covering $\{ U_{\alpha} \}$ 
of $C$, $\{ u_{\alpha\beta} \} \in C^1(\cF^0)$, $\{ v_{\alpha} \} \in C^0(\cF^1)$
and so on.
This pairing gives a nondegenerate 2-form on the moduli space $M_{X}$.
This fact follows from the Serre duality and the five lemma:
\begin{equation}\label{diagram:hypercohomology_LES}
\xymatrix{
H^0(\mathcal{F}^0) \ar[r] \ar[d]^-{\sim} & H^0(\mathcal{F}^1) \ar[r] \ar[d]^-{\sim} 
& \bH^1(\mathcal{F}^{\bullet}) \ar[r] \ar[d]^-{\sim} & H^1(\mathcal{F}^0) \ar[r] \ar[d]^-{\sim} 
& H^1(\mathcal{F}^1) \ar[d]^-{\sim} \\
H^1(\mathcal{F}^1)^{\vee} \ar[r] & H^1(\mathcal{F}^0)^{\vee} \ar[r] 
& \bH^1(\mathcal{F}^{\bullet})^{\vee} \ar[r] & H^0(\mathcal{F}^1)^{\vee} \ar[r] 
& H^0(\mathcal{F}^0)^{\vee}\rlap{.}
}
\end{equation}
We denote by $\omega$
the nondegenerate 2-form on $M_{X}$.
This 2-form $\omega$ is a symplectic structure.
That is, we have $d\omega=0$ (see \cite[Proposition 7.3]{Ina}
and \cite[Proposition 4.2]{IS}).

We get as a consequence: 

\begin{prop}\label{prop:dimension}
 The dimension of $M_{X}^0$ is equal to $2N$, where $N = 4g - 3 + n$. 
\end{prop}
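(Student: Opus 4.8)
The plan is to reduce the statement to a cohomological computation of the tangent space dimension and then to evaluate an Euler characteristic by Riemann--Roch. Since the three conditions defining $M_X^0$ inside $M_X$ (namely $\dim_{\mathbb{C}}H^0(C,E)=1$, reducedness of $\operatorname{div}(\varphi_\nabla)$, and disjointness of its support from $D$) are all open, $M_X^0$ is Zariski open in $M_X$; it is non-empty, since by Lemma~\ref{lem:independence} generic data in $\mathrm{Sym}^{N}(\boldsymbol{\Omega}(D))$ produce a companion normal form and hence a point of $M_X^0$. Thus $\dim M_X^0=\dim M_X$, and by the identification $T_{(E,\nabla,\{l^{(i)}\})}M_X\cong\mathbb{H}^1(\mathcal{F}^\bullet)$ recalled above it suffices to prove $\dim_{\mathbb{C}}\mathbb{H}^1(\mathcal{F}^\bullet)=2N$ at a generic point.

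First I would read $\dim\mathbb{H}^1$ off the hypercohomology long exact sequence of $\mathcal{F}^\bullet$, which yields $\dim\mathbb{H}^1(\mathcal{F}^\bullet)=\dim\mathbb{H}^0(\mathcal{F}^\bullet)+\dim\mathbb{H}^2(\mathcal{F}^\bullet)-\chi(\mathcal{F}^\bullet)$ with $\chi(\mathcal{F}^\bullet)=\chi(\mathcal{F}^0)-\chi(\mathcal{F}^1)$. Now $\mathbb{H}^0(\mathcal{F}^\bullet)$ consists of the global parabolic endomorphisms commuting with $\nabla$; irreducibility of $(E,\nabla)$ (Remark~\ref{2023_7_13_16_34}) and Schur's lemma force these to be scalars, so $\dim\mathbb{H}^0(\mathcal{F}^\bullet)=1$. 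The self-duality exhibited in diagram~\eqref{diagram:hypercohomology_LES} identifies $\mathbb{H}^2(\mathcal{F}^\bullet)$ with $\mathbb{H}^0(\mathcal{F}^\bullet)^\vee$, whence $\dim\mathbb{H}^2(\mathcal{F}^\bullet)=1$. Therefore $\dim\mathbb{H}^1(\mathcal{F}^\bullet)=2-\chi(\mathcal{F}^0)+\chi(\mathcal{F}^1)$, and everything is reduced to two Riemann--Roch computations.

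For those I would use the short exact sequences $0\to\mathcal{F}^0\to\mathcal{E}nd(E)\to Q_0\to0$ and $0\to\mathcal{F}^1\to\mathcal{E}nd(E)\otimes\Omega^1_C(D)\to Q_1\to0$, where $Q_0,Q_1$ are torsion sheaves supported on $D$. As $\deg\mathcal{E}nd(E)=0$, Riemann--Roch gives $\chi(\mathcal{E}nd(E))=4(1-g)$ and, using $\deg\Omega^1_C(D)=2g-2+n$, $\chi(\mathcal{E}nd(E)\otimes\Omega^1_C(D))=4(2g-2+n)+4(1-g)=4g-4+4n$. The conditions are local at each $t_i$: preserving $l^{(i)}$ kills the lower-left entry to order $m_i$, so $\operatorname{length}(Q_0)=n$; while for $\mathcal{F}^1$ the flag must be preserved and the diagonal polar part is frozen by the fixed spectral data of $X$, amounting to $3m_i$ scalar conditions at $t_i$, so $\operatorname{length}(Q_1)=3n$. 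Hence $\chi(\mathcal{F}^0)=4-4g-n$ and $\chi(\mathcal{F}^1)=4g-4+n$, giving $\dim_{\mathbb{C}}\mathbb{H}^1(\mathcal{F}^\bullet)=2-(4-4g-n)+(4g-4+n)=8g-6+2n=2N$.

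I expect the delicate point to be the local length of $Q_1$: one must check that preserving $l^{(i)}$ together with the frozen spectral data cuts out exactly $3m_i$ (and not $2m_i$) conditions at $t_i$, the extra $m_i$ coming from the distinction between $\Omega^1_C$ and $\Omega^1_C(D)$ along $m_i t_i$ and from the eigenvalues being fixed; it is precisely this correction that converts a naive count into $2N$. The vanishing $\mathbb{H}^0(\mathcal{F}^\bullet)=\mathbb{C}$ also deserves care and rests squarely on irreducibility. As a consistency check, the same value arises from the parameter count of Subsections~\ref{subsec:delta}--\ref{subsec:beta}: once the apparent divisor $B$ is fixed, the pair $(\beta,\delta)$ supplies $g+(3g-3+n)=N$ parameters, while $B$ itself moves in an $N$-dimensional family, in agreement with Lemma~\ref{lem:independence}.
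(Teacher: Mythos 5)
Your argument is correct and follows essentially the same route as the paper: identify the tangent space with $\mathbb{H}^1(\mathcal{F}^\bullet)$, get $\dim\mathbb{H}^0=\dim\mathbb{H}^2=1$ from irreducibility, Schur's lemma and duality, and extract $\dim\mathbb{H}^1$ from the Euler characteristic of the hypercohomology long exact sequence via Riemann--Roch. The only substantive difference is that you compute $\chi(\mathcal{F}^1)$ by a direct local count of the torsion quotient (length $3m_i$ at each $t_i$), whereas the paper obtains it from the Serre duality $H^0(\mathcal{F}^1)\cong H^1(\mathcal{F}^0)^\vee$ of diagram~\eqref{diagram:hypercohomology_LES}; your count $\operatorname{length}(Q_1)=3n$ is exactly the one forced by that duality (equivalently by $\mathcal{F}^1\cong(\mathcal{F}^0)^\vee\otimes\Omega^1_C$), so the delicate point you flag is resolved consistently, and as a bonus your version does not need the simplicity of the underlying quasi-parabolic bundle that the paper invokes to get $H^0(\mathcal{F}^0)\cong\mathbb{C}$.
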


\begin{proof}
By irreducibility of $(E,\nabla)$ and Schur's lemma we have 
$$
  \bH^0(\mathcal{F}^{\bullet})\cong \mathbb{C}. 
$$
On a Zariski open subset of $M_{X}$, the underlying 
quasi-parabolic vector bundle $(E, \{ l^{(i)} \}_i ) $ is irreducible, so we also have
$$
  H^0(\mathcal{F}^{0})\cong \mathbb{C}. 
$$
Clearly, we have $\operatorname{deg} (\mathcal{F}^0) = -\operatorname{length} (D)$. 
From Riemann--Roch we find 
\begin{align*}
  \operatorname{dim}_{\mathbb{C}} H^1 (\mathcal{F}^0) & = \operatorname{dim}_{\mathbb{C}} H^0 (\mathcal{F}^0) + 4 (g-1) - \operatorname{deg} (\mathcal{F}^0) \\
  & = 4g - 3 + n = N. 
\end{align*}
By Serre duality and Euler characteristic count applied to the hypercohomology long exact sequence~\eqref{diagram:hypercohomology_LES}, we get the statement. 
\end{proof}

\subsection{Trivializations of $E$}\label{2023_7_4_13_59}

Our purpose is to give canonical coordinates of 
$M_{X}^0$
with respect to the symplectic form~\eqref{2020.11.7.15.48}.
To do it, we will calculate the \v{C}ech cohomology 
by taking trivializations of $E$.
To simplify the calculation, we take 
trivializations of $E$ by using 
$$
\phi_{\nabla} \colon E_0 \xrightarrow{\ \subset \ } E,
$$
whose cokernel defines the apparent singularities.
In this section, we will discuss construction of 
the trivializations of $E$ by using $\phi_{\nabla}$.

We take $(E,\nabla, \{ l^{(i)} \}) \in M_{X}^0$.
Let $\{(q_j, \zeta_j\operatorname{d}\! z_j)\}_{j=1,2,\ldots,N}$ 
be the point on $\mathrm{Sym}^{N}(\boldsymbol{\Omega}(D))$ 
corresponding to $(E,\nabla, \{ l^{(i)} \})$.
We assume that the point $\{(q_j, \zeta_j\operatorname{d}\! z_j)\}_{j=1,2,\ldots,N}$ is generic in the sense of Lemma~\ref{lem:independence}.
Let $U^{\mathrm{an}}_{q_j}$ be an analytic open subset of $C$ such that 
$q_j \in U^{\mathrm{an}}_{q_j}$
and 
$U^{\mathrm{an}}_{t_i}$ be 
an analytic open subset of $C$ such that 
$t_i \in U^{\mathrm{an}}_{t_i}$.
We assume that $U^{\mathrm{an}}_{q_j}$ and $U^{\mathrm{an}}_{t_i}$ are small enough.
We take an analytic coordinate $z_j$ on $U^{\mathrm{an}}_{q_j}$
such that it is independent of the moduli space 
$M_{X}^0$.
We denote also by $q_j$ the complex number so that 
the point $q_j$ on $C$ is defined by $z_j -q_j =0$.

\begin{definition}\label{2023_7_8_20_43}
Let $\{ U_{\alpha} \}_{\alpha}$ be an analytic open covering of $C$: $C = \bigcup_{\alpha} U_{\alpha}$ such that 
\begin{itemize}
\item[(i)]
$\sharp\{ i \mid t_i  \cap U_{\alpha} \neq \emptyset \} \le 1$ for any $\alpha$, and 
$\sharp\{ \alpha \mid t_i  \cap U_{\alpha} \neq \emptyset \} \le 1$ for any $i$,

\item[(ii)]  
$\sharp\{ j \mid q_j  \cap U_{\alpha} \neq \emptyset \} \le 1$ for any $\alpha$, and 
$\sharp\{ \alpha \mid q_j  \cap U_{\alpha} \neq \emptyset \} \le 1$ for any $j$,

\item[(iii)] 
$\Omega^1_{C}(D) $ is free on $U_{\alpha}$ for any $\alpha$,
that is, $\Omega^1_{C}(D)|_{U_{\alpha}} \cong \mathcal{O}_{U_{\alpha}} $, 

\item[(iv)] 
$U_{\alpha_{t_i}} = U^{\mathrm{an}}_{t_i}$ and
$U_{\alpha_{q_j}} = U^{\mathrm{an}}_{q_j}$.
\end{itemize}
Here we denote by $\alpha_{t_i}$ 
the index $\alpha $ such that $t_i \in U_{\alpha}$, and
by $\alpha_{q_j}$ the index
$\alpha $ such that $q_j \in U_{\alpha}$.

\end{definition}

We fix trivializations
$\omega_{\alpha} \colon 
\mathcal{O}_{U_{\alpha}}\xrightarrow{\ \sim \ }   \Omega^1_{C}(D)|_{U_{\alpha}}$
of $\Omega^1_{C}(D)$.
We assume that $\omega_{\alpha}$ is independent of the moduli space 
$M_{X}^0$.
By using $\omega_{\alpha}$, we have
$\omega^{-1}_{\alpha} \colon 
\mathcal{O}_{U_{\alpha}} \xrightarrow{\ \sim \ }  (\Omega^1_{C}(D))^{-1}|_{U_{\alpha}}$.
By the trivializations, we have trivializations
$\varphi_{\alpha}^{\mathrm{norm}} \colon  
\mathcal{O}^{\oplus 2}_{U_{\alpha}}
\xrightarrow{\ \sim \ }
E_0 |_{U_{\alpha}} $
of $E_0$.
Assume that the connection matrices $A_{\alpha}^{\mathrm{norm}}$
of $\nabla_0$ associated to $\varphi_{\alpha}^{\mathrm{norm}}$
are 
\begin{equation}\label{2023_4_5_11_56}
A_{\alpha}^{\mathrm{norm}}=
\begin{pmatrix}
0 & \beta_{\alpha} \\
\gamma_{\alpha} & \delta_{\alpha}
\end{pmatrix},
\end{equation}
where $\beta_{\alpha},\delta_{\alpha} \in \Omega_C^1(D+B)|_{U_{\alpha}}$
are determined by
$\{(q_j, \mathrm{res}_{q_j} (\beta))\}_{j=1,2,\ldots,N}$
(see Lemma \ref{lem:independence}).
The 1-form 
$\gamma_{\alpha} \in \Omega_C^1(D)|_{U_{\alpha}}$ 
is the image of $1$ under the composition
$$
\mathcal{O}_{U_{\alpha}} \xrightarrow{ \ \sim \ }
(\Omega^1_{C}(D))^{-1} \otimes \Omega^1_{C}(D)  |_{U_{\alpha}} 
\xrightarrow{\omega_{\alpha}\otimes 1 }
\mathcal{O}_{U_{\alpha}}\otimes \Omega^1_{C}(D).
$$
In particular, 
$\gamma_{\alpha}$
is independent of the moduli space $M_{X}^0$ 
for any $\alpha$.
The polar part of $A_{\alpha_{t_i}}^{\mathrm{norm}}$
at $t_i$ is
independent of the moduli space $M_{X}^0$ 
for any $i$.
We set
\begin{equation}\label{2023_7_12_16_04}
\zeta_j := \frac{\mathrm{res}_{q_j} (\beta)}{ \gamma_{\alpha_{q_j}}|_{q_j} } \in  \mathbb{C}
\quad 
\text{ for $j=1,2,\ldots,N$.}
\end{equation}
Here $\beta \in H^0(C,(\Omega^1_C)^{\otimes 2}(2D+B))$ 
is the $(1,2)$-entry of \eqref{eq:normal_form}.
Notice that $\beta|_{U_{\alpha}}  = \beta_{\alpha}\gamma_{\alpha}$,
where $\beta_{\alpha}$ and $\gamma_{\alpha}$ are in \eqref{2023_4_5_11_56}.
So we have
$$
\mathrm{res}_{q_j} ( A_{\alpha_{q_j}}^{\mathrm{norm}} ) =
\begin{pmatrix}
0 & \zeta_j \\
0 &1
\end{pmatrix}\quad 
\text{ for $j=1,2,\ldots,N$.}
$$

\begin{definition}\label{2023_7_8_23_29}
We define other trivializations 
$\varphi_{\alpha}^{\mathrm{App},0}
\colon \mathcal{O}^{\oplus 2}_{U_{\alpha}}
\xrightarrow{\ \sim \ }
E_0 |_{U_{\alpha}}$
of $E_0$ for each $\alpha$ as follows:
\begin{itemize}

\item[(i)] When $\alpha = \alpha_{q_j}$, 
we take a trivialization $\varphi_{\alpha}^{\mathrm{App},0}$
as 
\begin{equation*}\label{2023_3_16_18_38}
\varphi_{\alpha}^{\mathrm{App},0} 
= 
\varphi_{\alpha}^{\mathrm{norm}} \circ
\begin{pmatrix}
1 &\zeta_j \\
 0 & 1
\end{pmatrix}.
\end{equation*}
Note that this triangular matrix appeared in \eqref{diag_by_conj_by_triangular}.

\item[(ii)] Otherwise,
we take a trivialization $\varphi_{\alpha}^{\mathrm{App},0}$
as $\varphi_{\alpha}^{\mathrm{App},0} 
= \varphi_{\alpha}^{\mathrm{norm}}$.

\end{itemize}
\end{definition}

Let $A_{\alpha}^{\mathrm{App},0}$ be
the connection matrix
of $\nabla_0$ associated to $\varphi_{\alpha}^{\mathrm{App},0}$,
that is,
$$
(\varphi_{\alpha}^{\mathrm{App},0} )^{-1}
\circ ( \phi_{\nabla}^* \nabla ) \circ
\varphi_{\alpha}^{\mathrm{App},0}  
= \operatorname{d} + A_{\alpha}^{\mathrm{App},0}.
$$
We have that
$$
A_{\alpha}^{\mathrm{App},0}=
\begin{cases}
\begin{pmatrix}   - \zeta_j \gamma_{\alpha}  
& \beta_{\alpha}-\zeta_j\delta_{\alpha}-\zeta_j^2\gamma_{\alpha}\\ 
\gamma_{\alpha} & \delta_{\alpha} + \zeta_j\gamma_{\alpha} \end{pmatrix} 
& \text{when $\alpha = \alpha_{q_j}$} \\
\begin{pmatrix}
0 & \beta_{\alpha} \\
\gamma_{\alpha} & \delta_{\alpha}
\end{pmatrix}  & \text{otherwise}.
\end{cases}
$$
We have 
$$
\mathrm{res}_{q_j} ( A_{\alpha_{q_j}}^{\mathrm{App},0} ) =
\begin{pmatrix}
0 & 0 \\
0 &1
\end{pmatrix}\quad 
\text{ for $j=1,2,\ldots,N$.}
$$

Now we define trivializations of $E$
by using $\phi_{\nabla} \colon E_0 \rightarrow E$ in \eqref{eq:phi_nabla}
and the trivialization of $E_0$ in Definition \ref{2023_7_8_23_29}.

\begin{definition}\label{2023_7_2_11_52}
Now we define trivialization
$\varphi^{\mathrm{App}}_{\alpha} \colon 
\mathcal{O}^{\oplus 2}_{U_{\alpha}}
 \xrightarrow{\ \sim \ } 
E|_{U_{\alpha}}$
of $E$ for the open covering $\{ U_{\alpha} \}_{\alpha}$ in Definition \ref{2023_7_8_20_43} as follows.
\begin{itemize}
\item[(i)] 
When $\alpha = \alpha_{q_j}$, 
we take a trivialization $\varphi^{\mathrm{App}}_{\alpha}$
so that 
$$
(\varphi^{\mathrm{App}}_{\alpha} )^{-1}\circ \phi_{\nabla}|_{U_{\alpha}} \circ 
\varphi^{\mathrm{App},0}_{\alpha}
=\begin{pmatrix}
1 & 0 \\
 0 & z_j -q_j
\end{pmatrix}.
$$

\item[(ii)] When $\alpha = \alpha_{t_i}$, 
we take $g^{t_i}_{\alpha} \in \mathrm{Aut}(\mathcal{O}_{U_{\alpha}}^{\oplus 2})$
so that 
the polar part of $(g^{t_i}_{\alpha})^{-1} A_{\alpha}^{\mathrm{norm}} g^{t_i}_{\alpha}$
is diagonal at $m_i[t_i]$.
We take a trivialization $\varphi^{\mathrm{App}}_{\alpha}$
as 
$$
\varphi^{\mathrm{App}}_{\alpha} = 
\phi_{\nabla}|_{U_{\alpha}} \circ 
\varphi_{\alpha}^{\mathrm{norm}} \circ g^{t_i}_{\alpha}.
$$
Here remark that $\phi_{\nabla}|_{U_{\alpha}}$ is invertible.
Since 
the polar part of $A_{\alpha_{t_i}}^{\mathrm{norm}}$
at $t_i$ is
independent of the moduli space $M_{X}^0$,
we may assume that $(g^{t_i}_{\alpha})_{< m_i}$
is
independent of the moduli space $M_{X}^0$.
Here we define $(g^{t_i}_{\alpha})_{< m_i}$ so that
$g^{t_i}_{\alpha} = (g^{t_i}_{\alpha})_{< m_i} + O(z_i^{m_i})$.

\item[(iii)] Otherwise, 
we take a trivialization $\varphi^{\mathrm{App}}_{\alpha}$
so that 
$$
(\varphi^{\mathrm{App}}_{\alpha} )^{-1}
\circ \phi_{\nabla}|_{U_{\alpha}} \circ 
\varphi^{\mathrm{norm}}_{\alpha}
=\begin{pmatrix}
1 &0 \\
 0 & 1
\end{pmatrix}.
$$
Since $\phi_{\nabla}|_{U_{\alpha}}$ is invertible in this case, 
$\varphi^{\mathrm{App}}_{\alpha}  
= \phi_{\nabla}|_{U_{\alpha}} \circ 
\varphi^{\mathrm{norm}}_{\alpha}$.

\end{itemize}
\end{definition}

Let $A_{\alpha}$ be 
the connection matrix
of $\nabla$ associated to $\varphi^{\text{App}}_{\alpha}$, 
that is
$$
(\varphi^{\text{App}}_{\alpha}  )^{-1}
\circ \nabla \circ 
\varphi^{\text{App}}_{\alpha}   
= \operatorname{d} + A_{\alpha}.
$$
We have that
\begin{equation}\label{2023_3_6_18_19}
A_{\alpha}=
\begin{cases}
\begin{pmatrix}   - \zeta_j \gamma_{\alpha} &
 \frac{\beta_{\alpha}-\zeta_j\delta_{\alpha}-\zeta_j^2\gamma_{\alpha}}{z_j -q_j}\\ 
(z_j -q_j)\gamma_{\alpha} & 
\delta_{\alpha} + \zeta_j\gamma_{\alpha}
-\frac{dz_j}{z_j-q_j}  \end{pmatrix}
& \text{when $\alpha = \alpha_{q_j}$} \\
\omega_i(X )  + [\text{holo.\ part}]
& \text{when $\alpha = \alpha_{t_i}$} \\
\begin{pmatrix}
0 & \beta_{\alpha} \\
\gamma_{\alpha} & \delta_{\alpha}
\end{pmatrix}  & \text{otherwise}.
\end{cases}
\end{equation}
Here $\omega_i(X )$ 
is the 1-form defined in \eqref{2023_4_10_19_32}.
The connection matrix $A_{\alpha_{q_j}}$ on $U_{\alpha_{q_j}}$ appeared in
\eqref{conn_matrix_at_qj}.
The connection matrix $A_{\alpha_{q_j}}$ has no pole at $q_j$ for any $j=1,2,\ldots,N$,
since  $\beta_{\alpha},\delta_{\alpha}$
are determined by Lemma \ref{lem:independence}.
We have considered diagonalization of the polar part of 
the connection $(E,\nabla)$ at each $t_i$. 
The reason why we consider diagonalization of the polar parts
is that we use the connection matrix \eqref{2023_3_6_18_19}
to calculate an infinitesimal deformation of $(E,\nabla)$.
So we will calculate variations of
the transition functions with respect to the trivializations in Definition \ref{2023_7_2_11_52}
and 
variations of the connection matrices \eqref{2023_3_6_18_19}.
These are elements of $\mathcal{F}^0$ and $\mathcal{F}^1$
of \eqref{2023_7_4_14_18}, respectively.
To be elements of $\mathcal{F}^0$ and $\mathcal{F}^1$, 
we need the compatibility with the quasi-parabolic structure.
However, this compatibility follows directly from diagonalization of the polar parts.

\subsection{Descriptions of the cocycles of an infinitesimal deformation}\label{2023_7_12_16_39}

Let
$\boldsymbol{\Omega} (D) \rightarrow C$ 
be the total space of $\Omega^1_C(D)$.
By the argument as in Lemma \ref{2023_7_11_12_53},
we may define a map 
\begin{equation}\label{2023_7_12_16_02}
\begin{aligned}
f_{\mathrm{App},0} \colon M_{X}^0
&\longrightarrow \mathrm{Sym}^{N}(\boldsymbol{\Omega} (D)) \\
(E,\nabla  ) & \longmapsto \left\{
(q_j, \mathrm{res}_{q_j} (\beta))
\right\}_{j=1,2,\ldots,N}.
\end{aligned}
\end{equation}
Here $\beta \in H^0(C, (\Omega^1_C)^{\otimes 2} (2D+B))$ 
is the $(1,2)$-entry of \eqref{eq:normal_form} 
and $\mathrm{res}_{q_j} (\beta) \in \Omega_C^1(D)|_{q_j}$.
We take an analytic open subset $V$ of 
$M_{X}^0$.
For the analytic open subset $V$,
we assume that we may define a composition
$$
\begin{aligned}
V \longrightarrow f_{\mathrm{App},0} (V) 
& \longrightarrow \mathrm{Sym}^{N}(\mathbb{C}_{(q,\zeta)}^2)  \\
(E,\nabla  ) \longmapsto
\left\{ (q_j, \mathrm{res}_{q_j} (\beta)) 
\right\}_{j=1,2,\ldots,N}
& \longmapsto \left\{
(q_j, \zeta_j)
\right\}_{j=1,2,\ldots,N},
\end{aligned}
$$
where $\zeta_j$ is defined in \eqref{2023_7_12_16_04},
and the image of $V$ under the composition is isomorphic to 
some analytic open subset of $\mathbb{C}_{(q,\zeta)}^{2N}$.
Let $U_{(q,\zeta)}$ be such an analytic open subset of $\mathbb{C}_{(q,\zeta)}^{2N}$.
So we have a map 
\begin{equation}\label{2023_7_12_16_26}
\begin{aligned}
M_{X}^0 \supset V
 &\longrightarrow U_{(q,\zeta)} \subset
\mathbb{C}_{(q,\zeta)}^{2N}  \\
(E,\nabla ) &\longmapsto 
(q_1,\ldots,q_N, \zeta_1,\ldots,\zeta_N),
\end{aligned}
\end{equation}
which are coordinates that we will use in this subsection.
We consider the family of $(E,\nabla,\{ l^{(i)} \})$
parametrized by $U_{(q,\zeta)}$
such that this family induces the inverse map of the map $V \rightarrow U_{(q,\zeta)}$.
Here this family is constructed by Lemma \ref{lem:independence}.
By using the trivializations
$\{\varphi^{\text{App}}_{\alpha}\}_{\alpha}$ of $E$ in Definition \ref{2023_7_2_11_52}, 
we have transition functions and connection matrices 
of the family of $(E,\nabla,\{ l^{(i)} \})$
parametrized by $U_{(q,\zeta)}$.
Indeed, the transition function is
\begin{equation}\label{2023_7_2_11_55}
B_{\alpha\beta} :=( \varphi^{\text{App}}_{\alpha}|_{U_{\alpha\beta}})^{-1}
\circ \varphi^{\text{App}}_{\beta}|_{U_{\alpha\beta}}
 \colon \mathcal{O}^{\oplus 2}_{U_{\alpha \beta}}
\longrightarrow 
\mathcal{O}^{\oplus 2}_{U_{\alpha \beta}},
\end{equation}
and the connection matrix is as in \eqref{2023_3_6_18_19}.

Let $(q_j,\zeta_j)_j$ be a point on $U_{(q,\zeta)}$.
The purpose of this subsection is to describe the tangent map 
\begin{equation}\label{2023_7_12_16_59}
\begin{aligned}
T_{(q_j,\zeta_j)_j} \mathbb{C}_{(q,\zeta)}^{2N}
&\longrightarrow
T_{(E,\nabla ,  \{l^{(i)}\} )} M_{X}^0 
\cong {\bf H}^1 (\mathcal{F}^{\bullet}) \\
v &\longmapsto [(\{u_{\alpha\beta} (v)\}, 
\{ v_{\alpha} (v)\} )]
\end{aligned}
\end{equation}
induced by the inverse map of \eqref{2023_7_12_16_26}.
For this purpose, 
we will calculate the variations of the transition functions and the connection matrices 
parametrized by $U_{(q,\zeta)}$ with respect to 
the tangent vector $v$ in $U_{(q,\zeta)} \subset \mathbb{C}_{(q,\zeta)}^{2N}$.
By using these variations,
we will calculate the cocycles $(\{u_{\alpha\beta} (v)\}, 
\{ v_{\alpha} (v)\} )$ of the infinitesimal deformation 
of $(E,\nabla ,  \{l^{(i)}\} )$ with respect to $v$.

First, we calculate $u_{\alpha\beta} (v) \in \mathcal{F}^1(U_{\alpha\beta})$.
We consider the variation of $B_{\alpha\beta}$ in \eqref{2023_7_2_11_55} by $v$: 
$$
B_{\alpha\beta} ( \mathrm{id} + \epsilon B_{\alpha\beta}^{-1} v(B_{\alpha\beta})) 
\colon \mathcal{O}^{\oplus 2}_{U_{\alpha \beta}}
\longrightarrow 
\mathcal{O}^{\oplus 2}_{U_{\alpha \beta}} \otimes \mathbb{C}[\epsilon].
$$
Then $u_{\alpha\beta}(v)$ has the following description:
\begin{equation}\label{2023_7_4_14_16}
u_{\alpha\beta} (v)= 
\varphi^{\text{App}}_{\beta}|_{U_{\alpha\beta}}
\circ
\left( B_{\alpha\beta}^{-1} v(B_{\alpha\beta}) \right) \circ 
(\varphi^{\text{App}}_{\beta}|_{U_{\alpha\beta}})^{-1}.
\end{equation}

\begin{lem}\label{2023_3_20_20_28}
Let $I_{\mathrm{cov}}$ be the set of the indices of the open covering $\{ U_{\alpha} \}$
in Definition \ref{2023_7_8_20_43}.
We set
$I^t_{\mathrm{cov}} = \{\alpha_{t_1}, \ldots,\alpha_{t_\nu}\}$
and 
$I^q_{\mathrm{cov}} = \{\alpha_{q_1}, \ldots,\alpha_{q_{N}}\}$,
which are subsets of $I_{\mathrm{cov}}$.
For $v\in T_{(E,\nabla ,  \{l^{(i)}\} )} M_X^0$,
we have the equality
\begin{equation}\label{2023_3_12_23_00}
u_{\alpha\beta} (v)
= 
\begin{cases}
0 & \alpha,\beta \in I_{\mathrm{cov}} \setminus (I^t_{\mathrm{cov}} \cup I^q_{\mathrm{cov}})  \\
\varphi^{\mathrm{App}}_{\alpha_{q_j}}|_{U_{\alpha\alpha_{q_j}}}
\circ
\begin{pmatrix}
0 & \frac{v(\zeta_j)}{z_j -q_j} \\
 0 &  \frac{v(q_j)}{z_j -q_j}
\end{pmatrix}\circ (\varphi^{\mathrm{App}}_{\alpha_{q_j}}|_{U_{\alpha\alpha_{q_j}}})^{-1} & 
\alpha \in I_{\mathrm{cov}} \setminus (I^t_{\mathrm{cov}} \cup I^q_{\mathrm{cov}}) ,
 \beta = \alpha_{q_j} \in I^q_{\mathrm{cov}} \\
\varphi^{\mathrm{App}}_{\alpha_{t_i}}|_{U_{\alpha\alpha_{t_i}}}
\circ
\left( (g^{t_i}_{\alpha_{t_i}})^{-1} v(g^{t_i}_{\alpha_{t_i}})  \right)
\circ (\varphi^{\mathrm{App}}_{\alpha_{t_i}}|_{U_{\alpha\alpha_{t_i}}})^{-1}
&\alpha \in I_{\mathrm{cov}}\setminus (I^t_{\mathrm{cov}} \cup I^q_{\mathrm{cov}})  ,
 \beta = \alpha_{t_i} \in I^t_{\mathrm{cov}},
\end{cases}
\end{equation}
and we have that 
\begin{equation}\label{2023_3_13_17_13}
(g^{t_i}_{\alpha_{t_i}})^{-1} v(g^{t_i}_{\alpha_{t_i}}) =O(z_i^{m_i} ).
\end{equation}
\end{lem}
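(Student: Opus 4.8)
The plan is to exploit the factorization of each trivialization $\varphi^{\mathrm{App}}_\alpha$ through the global inclusion $\phi_\nabla\colon E_0\to E$, and to keep track of which factors actually depend on the moduli coordinates $(q_j,\zeta_j)$. Reading off Definition~\ref{2023_7_2_11_52}, I would first record that in every case $\varphi^{\mathrm{App}}_\alpha=\phi_\nabla|_{U_\alpha}\circ\psi_\alpha$, where $\psi_\alpha$ is a trivialization of $E_0$ given by $\psi_\alpha=\varphi^{\mathrm{norm}}_\alpha$ for $\alpha\in I_{\mathrm{cov}}\setminus(I^t_{\mathrm{cov}}\cup I^q_{\mathrm{cov}})$, by $\psi_{\alpha_{q_j}}=\varphi^{\mathrm{norm}}_{\alpha_{q_j}}\circ\left(\begin{smallmatrix}1&\zeta_j\\0&1\end{smallmatrix}\right)\circ\diag(1,(z_j-q_j)^{-1})$, and by $\psi_{\alpha_{t_i}}=\varphi^{\mathrm{norm}}_{\alpha_{t_i}}\circ g^{t_i}_{\alpha_{t_i}}$. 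The structural remark that drives everything is that $\phi_\nabla$ is a \emph{single} global morphism, so on any overlap $U_{\alpha\beta}$ on which it is invertible from both sides one has $(\phi_\nabla|_{U_\alpha})^{-1}\circ\phi_\nabla|_{U_\beta}=\mathrm{id}_{E_0}$. By conditions (i)--(ii) of Definition~\ref{2023_7_8_20_43}, for $\alpha\in I_{\mathrm{cov}}\setminus(I^t_{\mathrm{cov}}\cup I^q_{\mathrm{cov}})$ the overlap $U_{\alpha\beta}$ avoids every $q_j$, where $\phi_\nabla$ drops rank; and since $\mathrm{div}(\varphi_\nabla)=B$ is disjoint from $D$, the map is invertible near each $t_i$ as well. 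Hence on all overlaps in question $B_{\alpha\beta}=\psi_\alpha^{-1}\circ\psi_\beta$.

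The second ingredient is that the $E_0$-data entering $\psi_\alpha$ is moduli-independent except for the explicit correction factors. Indeed $\varphi^{\mathrm{norm}}_\alpha$ is built from the fixed splitting~\eqref{eq:decomposition} and the fixed trivializations $\omega_\alpha$ of $\Omega^1_C(D)$, so the transition matrices $(\varphi^{\mathrm{norm}}_\alpha)^{-1}\circ\varphi^{\mathrm{norm}}_\beta$ do not vary over $M^0_X$. Therefore in the first case of~\eqref{2023_3_12_23_00} we get $B_{\alpha\beta}=(\varphi^{\mathrm{norm}}_\alpha)^{-1}\varphi^{\mathrm{norm}}_\beta$, whose variation vanishes, so $u_{\alpha\beta}(v)=0$ by~\eqref{2023_7_4_14_16}. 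For $\beta=\alpha_{q_j}$ the constant factor $(\varphi^{\mathrm{norm}}_\alpha)^{-1}\varphi^{\mathrm{norm}}_{\alpha_{q_j}}$ cancels in $B_{\alpha\beta}^{-1}v(B_{\alpha\beta})$, which reduces to $(TS)^{-1}v(TS)$ for $T=\left(\begin{smallmatrix}1&\zeta_j\\0&1\end{smallmatrix}\right)$ and $S=\diag(1,(z_j-q_j)^{-1})$; differentiating with $z_j$ held fixed and simplifying yields exactly $\left(\begin{smallmatrix}0&v(\zeta_j)/(z_j-q_j)\\0&v(q_j)/(z_j-q_j)\end{smallmatrix}\right)$, and conjugating by $\varphi^{\mathrm{App}}_{\alpha_{q_j}}$ gives the stated formula. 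For $\beta=\alpha_{t_i}$ the same cancellation of the constant factor leaves $B_{\alpha\beta}^{-1}v(B_{\alpha\beta})=(g^{t_i}_{\alpha_{t_i}})^{-1}v(g^{t_i}_{\alpha_{t_i}})$, producing the third line.

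Finally, for~\eqref{2023_3_13_17_13} I would invoke the hypothesis in Definition~\ref{2023_7_2_11_52}(ii) that the truncation $(g^{t_i}_{\alpha_{t_i}})_{<m_i}$ is independent of $M^0_X$; hence $v(g^{t_i}_{\alpha_{t_i}})=O(z_i^{m_i})$, and left multiplication by the holomorphic invertible germ $(g^{t_i}_{\alpha_{t_i}})^{-1}$ preserves this vanishing order, so $(g^{t_i}_{\alpha_{t_i}})^{-1}v(g^{t_i}_{\alpha_{t_i}})=O(z_i^{m_i})$.

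I expect the main obstacle to be organizational rather than deep: one must justify carefully that $\phi_\nabla$ is invertible on exactly the overlaps being used (so that $B_{\alpha\beta}=\psi_\alpha^{-1}\psi_\beta$ genuinely holds), and then carry out the $2\times2$ variation at $\alpha_{q_j}$, where the elementary factor $S$ has a pole along $z_j=q_j$ and one must check that the $1/(z_j-q_j)^2$ terms produced by differentiating $q_j$ cancel against the off-diagonal contribution after left multiplication by $(TS)^{-1}$, leaving only the simple poles displayed. Keeping the moduli-independent factors separated throughout is precisely what renders these cancellations transparent.
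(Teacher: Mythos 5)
Your proposal is correct and follows essentially the same route as the paper's proof: you cancel the global factor $\phi_\nabla$ on the overlaps (which avoid the $q_j$ and where $\phi_\nabla$ is invertible), observe that the $\varphi^{\mathrm{norm}}$ transition data and the truncation $(g^{t_i}_{\alpha_{t_i}})_{<m_i}$ are moduli-independent, and then compute $B_{\alpha\beta}^{-1}v(B_{\alpha\beta})$ explicitly in each case, with the $1/(z_j-q_j)^2$ terms cancelling exactly as in the paper's computation at $\alpha_{q_j}$. The only cosmetic difference is that you package the three cases uniformly via the factorization $\varphi^{\mathrm{App}}_\alpha=\phi_\nabla|_{U_\alpha}\circ\psi_\alpha$, whereas the paper writes out each chain of compositions separately.
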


\begin{proof}
Let $\alpha \in I_{\mathrm{cov}}\setminus (I^t_{\mathrm{cov}} \cup I^q_{\mathrm{cov}})$.
If $\beta \in I_{\mathrm{cov}}\setminus (I^t_{\mathrm{cov}} \cup I^q_{\mathrm{cov}})$,
then we have the following equalities:
$$
\begin{aligned}
B_{\alpha\beta} &=( \varphi^{\text{App}}_{\alpha}|_{U_{\alpha\beta}})^{-1}
\circ \varphi^{\text{App}}_{\beta}|_{U_{\alpha\beta}} \\
&=
(\varphi^{\mathrm{norm}}_{\alpha}|_{U_{\alpha\beta}})^{-1}
\circ (\phi_{\nabla}|_{U_{\alpha\beta}})^{-1}
\circ \phi_{\nabla}|_{U_{\alpha\beta}} \circ 
\varphi^{\mathrm{norm}}_{\beta}|_{U_{\alpha\beta}}\\
&= (\varphi^{\mathrm{norm}}_{\alpha}|_{U_{\alpha\beta}})^{-1}
\circ \varphi^{\mathrm{norm}}_{\beta} |_{U_{\alpha\beta}}= 
\begin{pmatrix}
1 & 0 \\
0 & ( (\omega_{\alpha}^{-1})^{-1} \circ  \omega_{\alpha_{q_j}}^{-1} )
\end{pmatrix}.
\end{aligned}
$$
Here $\omega^{-1}_{\alpha}$ is a trivialization 
$\mathcal{O}_{U_{\alpha}} \xrightarrow{\cong}   (\Omega^1_{C}(D))^{-1}|_{U_{\alpha}}$ 
for any $\alpha$.
Since 
$( (\omega_{\alpha}^{-1})^{-1} \circ  \omega_{\alpha_{q_j}}^{-1} )$
is independent of the moduli space 
$M_X^0$,
we have $v(B_{\alpha\beta})=0$.
So $u_{\alpha\beta}(v)=0$.

If $\beta = \alpha_{q_j}$,
then we have the following equalities:
\begin{equation}\label{2023_3_12_23_33}
\begin{aligned}
B_{\alpha\alpha_{q_j}} &=( \varphi^{\text{App}}_{\alpha}|_{U_{\alpha\alpha_{q_j}}})^{-1}
\circ \varphi^{\text{App}}_{\alpha_{q_j}}|_{U_{\alpha\alpha_{q_j}}} \\
&=
(\varphi^{\mathrm{App},0}_{\alpha}|_{U_{\alpha\alpha_{q_j}}})^{-1}
\circ (\phi_{\nabla}|_{U_{\alpha\alpha_{q_j}}})^{-1}
\circ \phi_{\nabla}|_{U_{\alpha\alpha_{q_j}}} \circ 
\varphi^{\mathrm{App},0}_{\alpha_{q_j}} |_{U_{\alpha\alpha_{q_j}}} \circ 
\begin{pmatrix}
1 & 0 \\
0& \frac{1}{z_j-q_j}
\end{pmatrix}\\
&= (\varphi^{\mathrm{App},0}_{\alpha}|_{U_{\alpha\alpha_{q_j}}})^{-1}
\circ \varphi^{\mathrm{App},0}_{\alpha_{q_j}}|_{U_{\alpha\alpha_{q_j}}}
\circ \begin{pmatrix}
1 & 0 \\
0& \frac{1}{z_j-q_j}
\end{pmatrix}\\
&=(\varphi^{\mathrm{norm}}_{\alpha}|_{U_{\alpha\alpha_{q_j}}})^{-1}
\circ \varphi^{\mathrm{norm}}_{\alpha_{q_j}}|_{U_{\alpha\alpha_{q_j}}}
\circ \begin{pmatrix}
1 & \zeta_j \\
0& 1
\end{pmatrix}
 \begin{pmatrix}
1 & 0 \\
0& \frac{1}{z_j-q_j}
\end{pmatrix}\\
&= 
\begin{pmatrix}
1 & 0 \\
0 & ( (\omega_{\alpha}^{-1})^{-1} \circ  \omega_{\alpha_{q_j}}^{-1} )
\end{pmatrix}
\begin{pmatrix}
1 & \frac{\zeta_j}{z_j-q_j} \\
0&   \frac{1}{z_j-q_j}
\end{pmatrix}.
\end{aligned}
\end{equation}
So we have 
$$
B_{\alpha\alpha_{q_j}}^{-1} v(B_{\alpha\alpha_{q_j}}) 
=\begin{pmatrix}
1 & -\zeta_j \\
0& z_j-q_j
\end{pmatrix}
\begin{pmatrix}
0 & \frac{v(\zeta_j) (z_j-q_j) + \zeta_j v(q_j) }{(z_j-q_j)^2} \\
0& - \frac{-v(q_j)}{(z_j-q_j)^2}
\end{pmatrix} =\begin{pmatrix}
0 & \frac{v(\zeta_j)}{z_j -q_j} \\
 0 &  \frac{v(q_j)}{z_j -q_j}
\end{pmatrix}.
$$

If $\beta = \alpha_{t_i}$,
then we have the following equalities:
$$
\begin{aligned}
B_{\alpha\alpha_{t_i}} &=( \varphi^{\text{App}}_{\alpha}|_{U_{\alpha\alpha_{t_i}}})^{-1}
\circ \varphi^{\text{App}}_{\alpha_{t_i}} |_{U_{\alpha\alpha_{t_i}}}\\
&=
(\varphi^{\mathrm{norm}}_{\alpha}|_{U_{\alpha\alpha_{t_i}}})^{-1}
\circ (\phi_{\nabla}|_{U_{\alpha\alpha_{t_i}}})^{-1}
\circ \phi_{\nabla}|_{U_{\alpha\alpha_{t_i}}} \circ 
\varphi^{\mathrm{norm}}_{\alpha_{t_i}} |_{U_{\alpha\alpha_{t_i}}} \circ 
g^{t_i}_{\alpha_{t_i}}\\
&= \begin{pmatrix}
1 & 0 \\
0 & ( (\omega_{\alpha}^{-1})^{-1} \circ  \omega_{\alpha_{q_j}}^{-1} )
\end{pmatrix}\circ 
g^{t_i}_{\alpha_{t_i}}.
\end{aligned}
$$
So we have 
$B_{\alpha\alpha_{t_i}}^{-1} v(B_{\alpha\alpha_{t_i}}) 
=(g^{t_i}_{\alpha_{t_i}})^{-1} v(g^{t_i}_{\alpha_{t_i}})$.
Since
$(g^{t_i}_{\alpha})_{< m_i}$
is independent of the moduli space $M^0_X$,
we have that $v(g^{t_i}_{\alpha_{t_i}}) = O(z_i^{m_i})$.
Finally, we have the statement of the lemma.
\end{proof}

Next we calculate $v_{\alpha}(v) \in \mathcal{F}^1(U_{\alpha})$
for $v\in T_{(E,\nabla ,  \{l^{(i)}\} )} M^0_X$.
This is given by calculating the variation 
of the connection matrix $A_{\alpha}$
in \eqref{2023_3_6_18_19}
with respect to $v$.
So we have
\begin{equation}\label{2023_3_12_22_59}
v_{\alpha}(v)=
\begin{cases}
 \varphi^{\text{App}}_{\alpha}\circ
\begin{pmatrix}   - v( \zeta_j) \gamma_{\alpha}
 & v\left( \frac{\beta_{\alpha}-\zeta_j\delta_{\alpha}-\zeta_j^2\gamma_{\alpha}}{z_j -q_j} \right)\\ 
-v(q_j) \gamma_{\alpha} & 
 v(\mathrm{tr}(A_{\alpha_{q_j}})) +  v(\zeta_j)\gamma_{\alpha}  \end{pmatrix}
\circ ( \varphi^{\text{App}}_{\alpha})^{-1}
& \text{when $\alpha = \alpha_{q_j}$} \\
\varphi^{\text{App}}_{\alpha} \circ
\begin{pmatrix}
0 & v(\beta_{\alpha}) \\
0 &v(\mathrm{tr}(A_{\alpha}))
\end{pmatrix} 
\circ( \varphi^{\text{App}}_{\alpha})^{-1}&
\text{when $\alpha \in I_{\mathrm{cov}}\setminus (I^t_{\mathrm{cov}} \cup I^q_{\mathrm{cov}})$}
\end{cases}.
\end{equation}
Here remark that 
$\gamma_{\alpha}$
is independent of the moduli space $M_{X}^0$ 
for any $\alpha$.
When $\alpha = \alpha_{t_i}$, 
we have that $v_{\alpha}(v)$ is holomorphic at $t_i$.

\subsection{Canonical coordinates}\label{subsect:Canonical_Coor}

Now we introduce canonical coordinates on 
$M_{X}^0$
with respect to the symplectic form \eqref{2020.11.7.15.48}.
We recall that we have set $N:= 4g+n-3$. 

Let 
$ \pi \colon  \boldsymbol{\Omega}(D) \rightarrow C$ 
and
$ \pi_0 \colon  \boldsymbol{\Omega} \rightarrow C$ 
be the total spaces of $\Omega^1_C(D)$ and $\Omega^1_C$,
respectively.
The total space $\boldsymbol{\Omega}$ has 
the Liouville symplectic form $\omega_{\text{Liouv}}$.
Since we have an isomorphism
$$
\pi_0^{-1} (C\setminus \mathrm{Supp}(D)) 
\xrightarrow{ \ \sim \ }
\pi^{-1} (C\setminus \mathrm{Supp}(D)) ,
$$
the Liouville symplectic form induces a 
symplectic form
$\pi^{-1} (C\setminus \mathrm{Supp}(D))$.
Let 
$\pi_N \colon 
\mathrm{Sym}^{N}(\boldsymbol{\Omega}(D))
\rightarrow 
\mathrm{Sym}^{N}(C)$
be the map induced by the map $ \pi \colon  \boldsymbol{\Omega}(D) \rightarrow C$.
We set 
$$
\mathrm{Sym}^{N}(\boldsymbol{\Omega}(D))_0 := 
\left\{ \{ q_1,\ldots,q_N \} \in 
\pi_N^{-1} (\mathrm{Sym}^{N}(C \setminus \mathrm{Supp}(D) ))
\ \middle| \  
q_{j_1} \neq q_{j_2} \ (j_1\neq j_2)
\right\}.
$$
Then $\mathrm{Sym}^{N}(\boldsymbol{\Omega}(D))_0$ has the induced 
symplectic form from the Liouville symplectic form.

\begin{remark}
We have a map 
$f_{\mathrm{App},0} \colon  M_{X}^0 
\rightarrow \mathrm{Sym}^{N}(\boldsymbol{\Omega}(D))_0$,
which is described in \eqref{2023_7_12_16_02}.
Notice that $M_{X}^0$ 
and $\mathrm{Sym}^{N}(\boldsymbol{\Omega}(D))_0 $ have 
symplectic forms. 
But by the explicit calculation as below, we realize that
this map $f_{\mathrm{App},0}$ does {\rm not} preserve these symplectic structures.
So $f_{\mathrm{App},0}$ does {\rm not} give canonical coordinates directly.
To give canonical coordinates,
we have to modify the map $f_{\mathrm{App},0}$ as follows.
\end{remark}

We twist $\boldsymbol{\Omega}(D)$ by a class in $H^1(C, \Omega^1_C)$ as follows.
Let $c_d$ be the image of the line bundle $\mathrm{det} (E)$
under the morphism
$$
H^1 ( C , \mathcal{O}^*_C) \xrightarrow{ \ \operatorname{d} \log \ } H^1(C, \Omega^1_C) \cong \operatorname{Ext}_C^1(T_C, \mathcal{O}_C ).
$$
Let $\mathcal{A}_{C}(c_d)$ be the sheaf produced by the Atiyah sequence on $C$ with respect to $c_d$,
that is, $\mathcal{A}_{C}(c_d)$ is given by the extension 
\begin{equation}\label{2023_3_24_19_28}
0\longrightarrow  \mathcal{O}_C
\longrightarrow  \mathcal{A}_{C}(c_d)
\longrightarrow T_C
\longrightarrow 
0
\end{equation}
with respect to $c_d \in H^1(C, \Omega_C^1)$. 
Then, $\mathcal{A}_{C}(c_d)$ is naturally a Lie-algebroid, called the Atiyah algebroid of the $\mathbb{G}_m$-principal bundle $\operatorname{Tot} (T_C) \setminus 0$, where $0$ stands for the $0$-section; for details, see~\cite[Section~3.1.2]{LM}. 
We denote by $\mathrm{symb}_1 \colon \mathcal{A}_{C}(c_d) \rightarrow T_C$
the morphism in \eqref{2023_3_24_19_28}.
We consider the subsheaf $T_C (-D) \subset T_C$.
We set $\mathcal{A}_{C}(c_d,D) :=  \mathrm{symb}_1^{-1} T_C (-D)$, 
which is an extension 
$$
0\longrightarrow  \mathcal{O}_C
\longrightarrow  \mathcal{A}_{C}(c_d,D)
\longrightarrow T_C(-D)
\longrightarrow 
0.
$$
Let $\Omega^1_C(D, c_d)$ be the twisted cotangent bundle
over $C$
with respect to $\mathcal{A}_{C}(c_d,D)$,
that is,
$$
\Omega^1_C(D, c_d) = 
\left\{ \phi \in \mathcal{A}_{C}(c_d,D)^{\vee} \ \middle|\ 
\langle \phi, 1_{\mathcal{A}_{C}(c_d,D)} \rangle =1 \right\}.
$$
We denote by 
$$
\pi_{c_d} \colon 
\boldsymbol{\Omega}(D, c_d) \longrightarrow C
$$
the total space of the twisted cotangent bundle $\Omega^1_C(D, c_d)$, 
and a generic element of this affine bundle by $(q, \tilde{p} )$ in analogy with classical notation $(q, p )$ for points of $\boldsymbol{\Omega}(D)$. 
For each $(E,\nabla , \{ l^{(i)} \}) \in M_{X}^0$,
we have $(\mathrm{det}(E) ,\mathrm{tr} (\nabla))$.
The connection $\mathrm{tr} (\nabla)$ on the line bundle $\mathrm{det}(E)$
is considered as a {\it global} section of $\boldsymbol{\Omega}(D,c_d) \rightarrow C$,
which is the total space of the twisted cotangent bundle with respect to $\mathrm{det}(E)$.
The global section $\mathrm{tr} (\nabla)$ gives a diffeomorphism 
$$
\begin{aligned}
\boldsymbol{\Omega} (D) \longrightarrow \boldsymbol{\Omega}(D,c_d) ;\quad
(q, p )  \longmapsto (q, p +  \mathrm{tr} (\nabla) ).
\end{aligned}
$$
Notice that $\mathrm{tr} (\nabla)$ {\it does} depend on $M_{X}^0$.
So this morphism depends on $M_{X}^0$. 
Moreover, it is not a morphism of vector bundles.

\begin{definition}\label{2023_7_12_23_06}
We define the \emph{accessory parameter} associated to $(E,\nabla )$ at $q_j$ by 
$$
  \tilde{p}_j = \mathrm{res}_{q_j} (\beta) + \mathrm{tr}(\nabla)|_{q_j}, 
$$
where $\beta \in H^0(C, (\Omega^1_C)^{\otimes 2} (2D+B))$ 
is the $(1,2)$-entry of \eqref{eq:normal_form}
and $\mathrm{res}_{q_j}(\beta) \in \Omega^1_C(D) |_{q_j}$. 
The $N$-tuple $\left\{ \left( q_j , \tilde{p}_j \right) \right\}_{j=1,2,\ldots,N}$ will be called \emph{canonical coordinates} of $(E,\nabla )$. 
We let $f_{\mathrm{App}}$ be the map 
\begin{equation*}
\begin{aligned}
f_{\mathrm{App}} \colon M_{X}^0 &\longrightarrow
 \mathrm{Sym}^N (\boldsymbol{\Omega}(D,c_d)) \\
(E,\nabla ,  \{l^{(i)}\} ) &\longmapsto \left\{ \left( q_j , 
\tilde{p}_j \right) \right\}_{j=1,2,\ldots,N}.
\end{aligned}
\end{equation*}
\end{definition}

Notice that the map $f_{\mathrm{App},0}$ in \eqref{2023_7_12_16_02} 
is defined by using only $\mathrm{res}_{q_j}(\beta)$.
The reason why we consider the twisted cotangent bundle $\boldsymbol{\Omega}(D,c_d)$
is to justify $\mathrm{tr}(\nabla)|_{q_j}$.
The next proposition shows that the quantities introduced in the definition may indeed be called coordinates. 

\begin{prop}\label{prop:birational}
 The map $f_{\mathrm{App}}$ introduced in Definition~\ref{2023_7_12_23_06} is birational. 
\end{prop}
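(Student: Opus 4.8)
The plan is to produce a rational inverse of $f_{\mathrm{App}}$ over a dense open subset, which together with an equality of dimensions yields birationality. By Proposition~\ref{prop:dimension} the source $M_X^0$ has dimension $2N$; since $\boldsymbol{\Omega}(D,c_d)$ is a surface, its $N$-fold symmetric product $\mathrm{Sym}^N(\boldsymbol{\Omega}(D,c_d))$ also has dimension $2N$. Hence it suffices to check that $f_{\mathrm{App}}$ is generically bijective, and I would do this by reconstructing $(E,\nabla)$ from a generic tuple $\{(q_j,\tilde p_j)\}$.

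The engine of the reconstruction is Lemma~\ref{lem:independence}. Given a generic point $\{(q_j,\zeta_j\operatorname{d}\! z_j)\}_j\in\mathrm{Sym}^N(\boldsymbol{\Omega}(D))$, that lemma produces the \emph{unique} pair $(\beta,\delta)$ solving the apparent-singularity equations~\eqref{eq:apparent} with the normalization $\mathrm{res}_{q_j}\beta=\zeta_j\operatorname{d}\! z_j$, hence a unique companion normal form $\nabla_0$ as in~\eqref{eq:normal_form}. Applying the positive elementary transformation $\phi_\nabla$ of~\eqref{eq:phi_nabla} along the parabolic directions $(\zeta_j\operatorname{d}\! z_j:1)$ over $B=q_1+\cdots+q_N$ then recovers an $(E,\nabla)\in M_X^0$ carrying the prescribed spectral data of $X$. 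This already shows that the auxiliary map $f_{\mathrm{App},0}$ of~\eqref{2023_7_12_16_02} is birational; equivalently, $(q_1,\dots,q_N,\zeta_1,\dots,\zeta_N)$ are birational coordinates on $M_X^0$.

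It remains to pass from $f_{\mathrm{App},0}$ to $f_{\mathrm{App}}$, whose fiber coordinate over $q_j$ is translated to $\tilde p_j=\mathrm{res}_{q_j}\beta+\mathrm{tr}(\nabla)|_{q_j}$ by Definition~\ref{2023_7_12_23_06}. The clean observation is that $\mathrm{tr}(\nabla)=\delta-\operatorname{d}\!\varphi_\nabla/\varphi_\nabla$ is itself recovered from $\nabla_0$, so that $\mathrm{tr}(\nabla)|_{q_j}$ is a rational function of the full tuple $\{(q_k,\zeta_k\operatorname{d}\! z_k)\}$; thus $f_{\mathrm{App}}=T\circ f_{\mathrm{App},0}$, where $T\colon\mathrm{Sym}^N(\boldsymbol{\Omega}(D))\dashrightarrow\mathrm{Sym}^N(\boldsymbol{\Omega}(D,c_d))$ is the fiberwise translation $\{(q_j,v_j)\}\mapsto\{(q_j,v_j+\mathrm{tr}(\nabla)(q_j))\}$ that preserves the base points $q_j$. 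Since $f_{\mathrm{App},0}$ is already birational, everything reduces to showing $T$ is birational, i.e.\ that over a generic fixed $B$ the vertical map $\zeta\mapsto\tilde p$ with $\tilde p_j=\zeta_j\,\gamma_{\alpha_{q_j}}|_{q_j}+\mathrm{tr}(\nabla)(q_j)$ (recall~\eqref{2023_7_12_16_04}) is invertible. I would establish this by proving that its Jacobian $\det\bigl(\gamma_{\alpha_{q_j}}|_{q_j}\,\delta_{jk}+\partial_{\zeta_k}\mathrm{tr}(\nabla)(q_j)\bigr)$ is not identically zero, specializing the configuration exactly as in the proof of Lemma~\ref{lem:independence}, so that after sending part of the $\zeta$'s to $0$ and $1$ the determinant degenerates into a product of manifestly non-vanishing Wronskian and Brill--Noether determinants built from bases of $H^0(C,\Omega^1_C)$ and $H^0(C,(\Omega^1_C)^{\otimes2}(D))$.

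The main obstacle is precisely this last step. Because $\mathrm{tr}(\nabla)(q_j)$ depends on the accessory data through the $g$ parameters of $\delta$, which by~\eqref{eq:apparent} are coupled to \emph{all} of the $\zeta_k$ via the linear system with determinant~\eqref{eq:determinant}, the translation $T$ is not a fixed shift but a genuine rational self-correspondence of the fibers; consequently one must rule out that this rational map has degree $>1$, mere dominance being insufficient for birationality. I expect the most robust route around this is the alternative argument behind Theorem~\ref{thm:birational}, which reinterprets $\{(q_j,\tilde p_j)\}$ as the intersection divisor of a spectral curve in $\boldsymbol{\Omega}(D,c_d)$ and recovers $(E,\nabla)$ through the BNR correspondence; that viewpoint bypasses the explicit coupling and renders the generic injectivity transparent, thereby confirming the birationality asserted here.
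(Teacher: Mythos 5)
Your overall strategy (dimension count plus construction of a generic inverse) matches the paper's, and your use of Lemma~\ref{lem:independence} to see that $f_{\mathrm{App},0}$ is birational is fine. But the way you pass from $f_{\mathrm{App},0}$ to $f_{\mathrm{App}}$ leaves a genuine gap, and you say so yourself: the ``translation'' $T$ is not a fiberwise shift by a fixed section, because $\mathrm{tr}(\nabla)|_{q_j}$ depends on \emph{all} of the $\zeta_k$ through the $g$ parameters of $\delta$ determined by the linear system with determinant~\eqref{eq:determinant}. The Jacobian computation you propose would only show that the fiberwise map $\zeta\mapsto\tilde p$ is generically a local isomorphism, hence that $T$ (and so $f_{\mathrm{App}}$) is dominant and generically finite; it does not exclude degree $>1$, which is exactly what birationality requires. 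Your fallback --- invoking the spectral-curve/BNR picture of Theorem~\ref{thm:birational} --- is indeed a valid alternative route (the paper develops it in Section~\ref{Sec:Higgs}), but as written it is a pointer rather than an argument, so the proof is not complete on its own terms.

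The paper closes this gap without ever factoring through $T$: it reconstructs $(\beta,\delta)$ \emph{directly} from generic data $\{(q_j,\tilde p_j)\}$. Using $\delta=\mathrm{tr}(\nabla_0)$ one rewrites the apparent-singularity constraint~\eqref{eq:apparent} as the holomorphicity and vanishing of $\beta+\delta\left(\tilde p_j+\tfrac{\operatorname{d}\! z_j}{z_j}\right)-\left(\tilde p_j+\tfrac{\operatorname{d}\! z_j}{z_j}\right)^{2}$; after checking that the order $-2$ and $-1$ terms vanish automatically, the remaining vanishing condition is a linear system of $N$ equations in the $N$ unknowns $b_1,\dots,b_{N-g},d_1,\dots,d_g$ whose coefficient matrix is the determinant~\eqref{eq:determinant} with every $\zeta_j\operatorname{d}\! z_j$ replaced by $\tilde p_j$. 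Generic nonvanishing of that determinant follows by the same specialization as in Lemma~\ref{lem:independence}, and uniqueness of the solution of a linear system gives a genuine two-sided rational inverse --- not merely local invertibility --- so the degree issue never arises. If you want to salvage your factorization, you would need to upgrade your Jacobian claim to actual generic injectivity of $T$, which in effect amounts to redoing this linear-system argument in the $\tilde p$ variables.
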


\begin{proof}
It follows from Proposition~\ref{prop:dimension} that the dimensions of the source and target of $f_{\mathrm{App}}$ agree. 
We therefore need to show two things: first, that $f_{\mathrm{App}}$ is rational, and second, that it admits an inverse over a Zariski open subset of $\mathrm{Sym}^N (\boldsymbol{\Omega}(D,c_d))$. 

The first assertion is trivial, because the construction of the apparent singularities $q_j$ and their accessory parameters $\tilde{p}_j$ follow from algebraic arguments on certain Zariski open subsets. 

The key statement is existence of a generic inverse. 
This is now a variant of Lemma~\ref{lem:independence}.
Namely, fixing generic $\left\{ \left( q_j , \tilde{p}_j \right) \right\}_{j=1,2,\ldots,N}$, we must find a unique $(\delta, \beta)$. 
Since we have $\delta = \mathrm{tr}(\nabla_0)$, we get the expression 
$$
  \tilde{p}_j = \zeta_j \operatorname{d}\! z_j + \delta - \frac{\operatorname{d}\! z_j}{z_j}. 
$$
An algebraic manipulation shows that the constraint~\eqref{eq:apparent} expressing that the singularity at $q_j$ be apparent is equivalent to the holomorphicity and vanishing of the expression 
\begin{equation}\label{eq:quantum_characteristic}
 \beta + \delta \left( \tilde{p}_j + \frac{\operatorname{d}\! z_j}{z_j} \right) - \left( \tilde{p}_j + \frac{\operatorname{d}\! z_j}{z_j} \right)^2.
\end{equation}
We now study these conditions by taking the Laurent expansion of this expression with respect to $z_j$. 
We first observe that it clearly admits a pole of order at most $2$ at $q_j$, because $q_j\neq t_i$. 
Since $\delta$ has a simple pole with residue $1$, the term of degree $-2$ is $$
\left( \operatorname{d}\! z_j\right)^{\otimes 2}-\left( \operatorname{d}\! z_j\right)^{\otimes 2}=0. 
$$
So the pole is automatically at most simple. 

For the study of the residue, we need to introduce some notation: let us write 
\begin{align*}
  \delta_0 & = \frac{\operatorname{d}\! z_j}{z_j} + \delta_0^{(j)} \\
  \beta_0 & = \zeta_j \frac{\left( \operatorname{d}\! z_j\right)^{\otimes 2}}{z_j} + \beta_0^{(j)}
\end{align*}
for a holomorphic rank $1$ connection $\delta_0^{(j)}$ and a holomorphic quadratic differential $\beta_0^{(j)}$ on $U_{q_j}$. 
Then, the degree $-1$ part of~\eqref{eq:quantum_characteristic} is (up to a global factor $\operatorname{d}\! z_j$) 
$$
\zeta_j \operatorname{d}\! z_j + \tilde{p}_j + \left( \delta -  \frac{\operatorname{d}\! z_j}{z_j} \right) - 2  \tilde{p}_j = 0
$$
by the definition of $\tilde{p}_j$. 

Finally, to deal with the vanishing constraint, we make use of the same basis expansions for $\delta$ and $\beta$ as in Lemma~\ref{lem:independence}.  
Then, the conditions read as 
$$
  \sum_{k=1}^{N-g} b_k \nu_k (q_j) + \tilde{p}_j \sum_{l=1}^g d_l \omega_l (q_j) =  \left( \tilde{p}_j\right)^{\otimes 2} - \delta_0^{(j)} (q_j) \tilde{p}_j - \beta_0^{(j)} (q_j). 
$$
Now, the determinant of this linear system of $N$ equations (for $1\leq j \leq N$) in $N$ variables $b_1,\ldots, b_{N-g}, d_1, \ldots, d_g$ agrees with the determinant studied in Lemma~\ref{lem:independence}, up to replacing each occurrence of $\zeta_j \operatorname{d}\! z_j$ by $\tilde{p}_j$. 
The end of the proof then follows word by word the method of  Lemma~\ref{lem:independence}. 
\end{proof}

\begin{remark}
 The expression~\eqref{eq:quantum_characteristic} has variables $\tilde{p}_j$ in the twisted cotangent sheaf rather than the ordinary cotangent sheaf. 
 The quadratic polynomial of $\tilde{p}_j$ can be viewed as the characteristic polynomial of the connection matrix of $\nabla_0$. 
 Thus, in a sense the vanishing condition on~\eqref{eq:quantum_characteristic} may be interpreted as the requirement that $\tilde{p}_j$ lie on the \emph{quantum spectral curve} of $\nabla_0$, see e.g.~\cite{DumMul}. 
\end{remark}

By taking a local trivialization of $\mathrm{det}(E)$,
we have a concrete description of the map $f_{\mathrm{App}}$. 
Now we will discuss on such a description of $f_{\mathrm{App}}$.
The description discussed below is useful 
for the proof of Theorem \ref{2023_8_22_12_09} below.
Let $(E,\nabla , \{ l^{(i)} \}) \in M_{X}^0$.
As a local trivialization of $\mathrm{det}(E)$,
we take the isomorphism 
\begin{equation}\label{2023_8_24_14_00}
\mathrm{det} (\varphi_{\alpha_{q_j}}^{\mathrm{App}})
\colon \mathcal{O}_{U_{\alpha_{q_j}}} \longrightarrow \mathrm{det}(E)|_{U_{\alpha_{q_j}}},
\end{equation}
which is the determinant of the trivialization in Definition \ref{2023_7_2_11_52}.
Notice that the composition 
$$
\mathcal{O}_{U_{\alpha_{q_j}}}
\xrightarrow{\omega_{\alpha_{q_j}}^{-1}} 
( \Omega^1_C(D))^{-1}|_{U_{\alpha_{q_j}}}
\xrightarrow{ \mathrm{det}(\phi_{\nabla})|_{U_{\alpha_{q_j}}} }
\mathrm{det}(E)|_{U_{\alpha_{q_j}}}
\xrightarrow{\mathrm{det} (\varphi_{\alpha_{q_j}}^{\mathrm{App}})^{-1}}
\mathcal{O}_{U_{\alpha_{q_j}}}
$$
coincides with $(z_j -q_j) \colon \mathcal{O}_{U_{\alpha_{q_j}}} 
\rightarrow \mathcal{O}_{U_{\alpha_{q_j}}}$.
Let $\mathrm{tr}(A_{\alpha_{q_j}}) \in 
\Omega^1_C(D)|_{U_{\alpha_{q_j}}}$ be the connection matrix of 
$(\mathrm{det}(E) ,\mathrm{tr}(\nabla))$ on $U_{\alpha_{q_j}}$
with respect to the local trivialization $\mathrm{det} (\varphi_{\alpha_{q_j}}^{\mathrm{App}})$.
Then, 
by using \eqref{2023_7_12_16_04}, the map $f_{\mathrm{App}}$ has the 
following description: 
\begin{equation*}\label{2023_3_13_17_31}
\begin{aligned}
f_{\mathrm{App}} \colon 
(E,\nabla ,  \{l^{(i)}\} ) \longmapsto \left\{ \left( q_j , 
\zeta_j  \gamma_{\alpha_{q_j}}|_{q_j} + \mathrm{tr}(A_{\alpha_{q_j}})|_{q_j}
 \right) \right\}_{j=1,2,\ldots,N},
\end{aligned}
\end{equation*}
Here $\zeta_j  \gamma_{\alpha_{q_j}}|_{q_j} + \mathrm{tr}(A_{\alpha_{q_j}})|_{q_j}$
is an element of $\Omega^1_C(D)|_{q_j}$.
We set 
\begin{equation}\label{2023_7_12_16_43}
p_j :=     
\mathrm{res}_{q_j}\left( \frac{\zeta_j  \gamma_{\alpha_{q_j}}}{z_j-q_j} \right)
+\mathrm{res}_{q_j}\left( \frac{\mathrm{tr}(A_{\alpha_{q_j}})}{z_j-q_j} \right) ,
\end{equation}
which is the image of 
$\zeta_j  \gamma_{\alpha_{q_j}}|_{q_j} + \mathrm{tr}(A_{\alpha_{q_j}})|_{q_j}$
under the isomorphism $\Omega^1_C(D)|_{q_j} \cong \mathbb{C}$.

\begin{remark}
This $p_j$ is just the evaluation of 
the $(2,2)$-entry of the connection matrix $A_{\alpha_{q_j}}$ in
\eqref{2023_3_6_18_19} at $q_j$.
Note that the $(2,1)$-entry of this connection matrix $A_{\alpha_{q_j}}$ at $q_j$
vanishes.
So $p_j$ is an ``eigenvalue'' of $\nabla$ at $q_j$.
(On the other hand, $\zeta_j$ is an ``eigenvector'' of $\nabla_0$ at $q_j$).
This fact means that 
the coordinates $(q_j , p_j)_j$ 
are an analog of the coordinates on the moduli space of (parabolic) Higgs bundles 
given as in \cite{GNR} and \cite{Hurt}.
The coordinates on the moduli space of (parabolic) Higgs bundles 
are by using the BNR correspondence \cite{BNR}.   (See Section \ref{Sec:Higgs}).  
\end{remark}

Let 
$\pi_{c_d, N} \colon 
\mathrm{Sym}^{N}(\boldsymbol{\Omega}(D,c_d))
\rightarrow 
\mathrm{Sym}^{N}(C)$
be the map induced by the map 
$ \pi_{c_d} \colon  \boldsymbol{\Omega}(D,c_d) \rightarrow C$.
We set 
$$
\mathrm{Sym}^{N}(\boldsymbol{\Omega}(D,c_d))_0 :=
\left\{ \{ (q_j, \tilde{p}_j) \}_{j=1}^N \in 
\pi_{c_d,N}^{-1} (\mathrm{Sym}^{N}(C \setminus \mathrm{Supp}(D) ))
\ \middle| \  
q_{j_1} \neq q_{j_2} \ (j_1\neq j_2)
\right\}.
$$
Then $\mathrm{Sym}^{N}(\boldsymbol{\Omega}(D,c_d))_0$ has the induced 
symplectic form from the Liouville symplectic form.
Notice that by construction the image of $M_{X}^0$ under the map $f_{\mathrm{App}}$ is contained in $\mathrm{Sym}^{N}(\boldsymbol{\Omega}(D,c_d))_0$.

\begin{thm}\label{2023_8_22_12_09}
Let $\omega$ be the symplectic form on 
$M_{X}^0$
defined by \eqref{2020.11.7.15.48}.
The pull-back of the symplectic form on 
$ \mathrm{Sym}^N (\boldsymbol{\Omega}(D, c_d))_0$ under the map
$$
f_{\mathrm{App}} \colon M_{X}^0
\longrightarrow \mathrm{Sym}^{N}(\boldsymbol{\Omega}(D,c_d))_0
$$
in Definition \ref{2023_7_12_23_06}
coincides with $\omega$.
\end{thm}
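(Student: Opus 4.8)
The plan is to verify the equality of $2$-forms pointwise on the analytic chart $(q_1,\ldots,q_N,\zeta_1,\ldots,\zeta_N)$ of $V\subset M_X^0$ furnished by~\eqref{2023_7_12_16_26}, by evaluating both sides on an arbitrary pair of tangent vectors $v,v'$. On the target, the symplectic form on $\mathrm{Sym}^N(\boldsymbol{\Omega}(D,c_d))_0$ is the one induced by the Liouville form, so in the fiber coordinates $p_j$ of~\eqref{2023_7_12_16_43} it equals $\sum_{j=1}^N dp_j\wedge dq_j$, and its value on $(v,v')$ is $\sum_j\big(v(p_j)\,v'(q_j)-v'(p_j)\,v(q_j)\big)$. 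The content of the theorem is therefore the identity
$$
\omega(v,v')=\sum_{j=1}^N\big(v(p_j)\,v'(q_j)-v'(p_j)\,v(q_j)\big),
$$
where the left-hand side is computed through the \v{C}ech pairing~\eqref{2020.11.7.15.48} using the tangent map~\eqref{2023_7_12_16_59} and the cocycle formulas of Lemma~\ref{2023_3_20_20_28} and~\eqref{2023_3_12_22_59}, on the locus where the chart~\eqref{2023_7_12_16_26} and its inverse family are defined.

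The first step is to localize the pairing. The pairing~\eqref{2020.11.7.15.48} takes values in $\bH^2(\mathcal O_C\xrightarrow{d}\Omega^1_C)\cong\mathbb{C}$, and this isomorphism is realized by the \v{C}ech residue (trace) map, computed as a finite sum of local residues supported where the representing cochains fail to vanish. By~\eqref{2023_3_12_23_00} the cochain $u_{\alpha\beta}(v)$ is zero unless one of the indices lies in $I^q_{\mathrm{cov}}\cup I^t_{\mathrm{cov}}$, so only the points $q_j$ and $t_i$ can contribute. At each $t_i$ the relevant cochain is $(g^{t_i}_{\alpha_{t_i}})^{-1}v(g^{t_i}_{\alpha_{t_i}})=O(z_i^{m_i})$ by~\eqref{2023_3_13_17_13}; since the matrix entries of the $\cF^1$-cochain $v'_{\alpha_{t_i}}$ have poles of order at most $m_i$ at $t_i$, the traces $\mathrm{tr}(u_{\alpha\beta}\circ v'_\beta)$, $\mathrm{tr}(v_\alpha\circ u'_{\alpha\beta})$ and $\mathrm{tr}(u_{\alpha\beta}\circ u'_{\beta\gamma})$ with $\alpha_{t_i}\in\{\alpha,\beta,\gamma\}$ are all holomorphic there, so their residues vanish. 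Consequently $\omega(v,v')$ reduces to a sum of residues at the apparent singularities $q_j$ alone.

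Next I would compute the residue at a fixed $q_j$. Substituting the explicit off-diagonal matrix for $u_{\alpha\alpha_{q_j}}(v)$ from~\eqref{2023_3_12_23_00}, whose entries in the frame $\varphi^{\mathrm{App}}_{\alpha_{q_j}}$ are $v(\zeta_j)/(z_j-q_j)$ and $v(q_j)/(z_j-q_j)$, together with $v_{\alpha_{q_j}}(v)$ from~\eqref{2023_3_12_22_59}, and carrying out the traces prescribed by~\eqref{2020.11.7.15.48}, the local residue at $q_j$ should assemble into $v(p_j)\,v'(q_j)-v'(p_j)\,v(q_j)$. The mechanism is that the $\zeta_j$-dependence of $u_{\alpha\alpha_{q_j}}(v)$ produces the naive term $d\zeta_j\wedge dq_j$, while the $(2,2)$-entry of $v_{\alpha_{q_j}}(v)$, recording the variation $v(\mathrm{tr}(A_{\alpha_{q_j}}))$ of the trace connection, supplies precisely the extra summand by which $p_j$ differs from $\zeta_j\gamma_{\alpha_{q_j}}|_{q_j}$ in~\eqref{2023_7_12_16_43}. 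This is exactly the point at which the twist by $c_d$, namely the passage from $\boldsymbol{\Omega}(D)$ to $\boldsymbol{\Omega}(D,c_d)$ together with the shift by $\mathrm{tr}(\nabla)$, converts the non-symplectic map $f_{\mathrm{App},0}$ into the symplectic $f_{\mathrm{App}}$.

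The main obstacle will be the bookkeeping in this last residue computation: one must track how conjugation by $\varphi^{\mathrm{App}}_{\alpha_{q_j}}$ threads through the traces $\mathrm{tr}(u_{\alpha\beta}\circ v'_\beta)$ and $\mathrm{tr}(v_\alpha\circ u'_{\alpha\beta})$, and check that the cross terms coupling the variation of $\beta$ to the variation of $\delta$ cancel in pairs, so that only the clean pairing $\sum_j dp_j\wedge dq_j$ survives. A secondary but necessary point is the coordinate-independence of the identifications $\Omega^1_C(D)|_{q_j}\cong\mathbb{C}$ implicit in $\zeta_j$ and $p_j$, together with the compatibility of the trivializations $\varphi^{\mathrm{App}}_\alpha$ with the quasi-parabolic structure, which guarantees that $(\{u_{\alpha\beta}(v)\},\{v_\alpha(v)\})$ genuinely represents a class in $\bH^1(\cF^\bullet)$ and hence that the quantity computed is the intended symplectic form. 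Since both $2$-forms are holomorphic and agree on this dense analytic open set, they agree everywhere, yielding the theorem.
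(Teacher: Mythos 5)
Your proposal follows essentially the same route as the paper's proof: pass to the chart $(q_j,\zeta_j)$, represent $\omega(v,v')$ by the \v{C}ech cocycles of Lemma~\ref{2023_3_20_20_28} and~\eqref{2023_3_12_22_59}, localize the pairing in $\bH^2(\mathcal O_C\to\Omega^1_C)\cong\mathbb C$ as a sum of residues, kill the contributions at the $t_i$ via~\eqref{2023_3_13_17_13}, and extract $\sum_j dp_j\wedge dq_j$ from the residues at the $q_j$, with the variation of $\mathrm{tr}(A_{\alpha_{q_j}})$ accounting for the twist by $c_d$ exactly as in the paper (where the correction $v(\mathrm{tr}(A_{\alpha_{q_j}}))-v(\mathrm{tr}(A_\alpha))=-v(q_j)\operatorname{d}\!z_j/(z_j-q_j)^2$ is computed from the determinant cocycle and contributes no residue). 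The plan is correct and matches the paper's argument.
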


\begin{proof}
Let $V$ be an analytic open subset of $M_{X}^0$
as in Section \ref{2023_7_12_16_39}.
Moreover, we assume that we may define a composition
$$
\begin{aligned}
V \longrightarrow f_{\mathrm{App}} (V) 
& \longrightarrow \mathrm{Sym}^{N}(\mathbb{C}_{(q,p)}^2)  \\
(E,\nabla  ) \longmapsto
f_{\mathrm{App}}(E,\nabla  ) & \longmapsto \left\{
(q_j, p_j)
\right\}_{j=1,2,\ldots,N},
\end{aligned}
$$
where $p_j$ is defined in \eqref{2023_7_12_16_43},
and the image of $V$ under the composition is isomorphic to 
some analytic open subset of $\mathbb{C}_{(q,p)}^{2N}$.
Let $U_{(q,p)}$ be such an analytic open subset of $\mathbb{C}_{(q,p)}^{2N}$.
We denote by $f_2$ the map 
\begin{equation*}
\begin{aligned}
M_{X}^0 \supset V
 &\longrightarrow U_{(q,p)} \subset
\mathbb{C}_{(q,\zeta)}^{2N}  \\
(E,\nabla) &\longmapsto 
(q_1,\ldots,q_N, p_1,\ldots, p_N).
\end{aligned}
\end{equation*}
We consider the following maps
$$
\xymatrix{
U_{(q,\zeta)} & \ar[l]_-{f_1}^-{\sim}  V \ar[r]^-{f_2} 
& U_{(q,p)} \rlap{.}
}
$$
Here $f_1 \colon V \xrightarrow{\sim} U_{(q,\zeta)}$ is the isomorphism
\eqref{2023_7_12_16_26}.
The symplectic structure on $U_{(q,p)}$ induced by 
the symplectic structure on $ \mathrm{Sym}^N (\boldsymbol{\Omega}(D,c_d))$
is $\sum^N_{j=1} d p_j \wedge dq_j$.
We will show that 
$$
(f_1^{-1})^* (\omega|_V) =
(f_2 \circ f_1^{-1})^*\left( \sum^N_{j=1} d p_j \wedge dq_j \right).
$$
Let $v,v'$ be elements of 
$T_{(q_j,\zeta_j)_j } U_{(q,\zeta)} $
for $(q_j,\zeta_j)_j \in U_{(q,\zeta)}$.
We will use the description of the tangent map 
\eqref{2023_7_12_16_59} of $f^{-1}_1 \colon U_{(q,\zeta)}  \rightarrow V$.
That is, we calculate $(f_1^{-1})^* (\omega|_V)$
by applying the descriptions 
\eqref{2023_3_12_23_00}
and
\eqref{2023_3_12_22_59}
of $u_{\alpha\beta}(v)$ and $v_{\alpha}(v)$, respectively.

First we consider $\{ u_{\alpha\beta}(v)u_{\beta\gamma} (v')\}_{\alpha\beta\gamma}$.
Remark that $U_{\alpha_{q_{j_1}}} \cap U_{\alpha_{q_{j_2}}} = \emptyset$ 
for any $j_1$ and $j_2$,
$U_{\alpha_{t_{i_1}}} \cap U_{\alpha_{t_{i_2}}} = \emptyset$ 
for any $i_1$ and $i_2$,
and 
$U_{\alpha_{q_{j}}} \cap U_{\alpha_{t_{i}}} = \emptyset$ 
for any $j$ and $i$.
Then we have $u_{\alpha\beta}u_{\beta\gamma}=0$
by Lemma \ref{2023_3_20_20_28}.
So we may take a representative of the class in the pairing \eqref{2020.11.7.15.48} 
so that 
$$
[- \{ \mathrm{tr} (u_{\alpha\beta}(v) \circ v_{\beta}(v')) 
- \mathrm{tr} (v_{\alpha} (v) \circ u_{\alpha\beta}(v'))\}_{\alpha\beta} ]
\in H^1(C,\Omega^1_C) \cong \mathbb{C}.
$$

Now we calculate 
$\mathrm{tr} (u_{\alpha\beta}(v) \circ v_{\beta}(v')) 
- \mathrm{tr} (v_{\alpha} (v) \circ u_{\alpha\beta}(v'))$.
If $\alpha \in I_{\mathrm{cov}}\setminus (I^t_{\mathrm{cov}} \cup I^q_{\mathrm{cov}})$
and $\beta = \alpha_{q_j}$,
then, by applying \eqref{2023_3_12_23_00} and
\eqref{2023_3_12_22_59},
we have the following equalities
\begin{equation}\label{2023_3_12_23_44}
\begin{aligned}
& \mathrm{tr} (u_{\alpha\alpha_{q_j}}(v) v_{\alpha_{q_j}}(v')) 
- \mathrm{tr} (v_{\alpha} (v)  u_{\alpha\alpha_{q_j}}(v')) \\
&=
\mathrm{tr} \left( 
\begin{pmatrix}
0 & \frac{v(\zeta_j)}{z_j -q_j} \\
 0 &  \frac{v(q_j)}{z_j -q_j}
\end{pmatrix} 
\begin{pmatrix} 
  * &  * \\ 
-v'(q_j) \gamma_{\alpha_{q_j}} & v'(\mathrm{tr}(A_{\alpha_{q_j}}))
+ v'(\zeta_j)\gamma_{\alpha_{q_j}} 
 \end{pmatrix}
\right) \\
&\qquad - \mathrm{tr} \left(
\begin{pmatrix}   * &  * \\ 
0 & v(\mathrm{tr}(A_{\alpha})) \end{pmatrix}
\begin{pmatrix}
0 & \frac{v'(\zeta_j)}{z_j -q_j} \\
 0 &  \frac{v'(q_j)}{z_j -q_j}
\end{pmatrix}\right) \\
&= -\frac{v(\zeta_j)v'(q_j) \gamma_{\alpha_{q_j}}}{z_j -q_j} 
+ \frac{v(q_j) \left(v'(\mathrm{tr}(A_{\alpha_{q_j}}))
+v'(\zeta_i)\gamma_{\alpha_{q_j}} \right)}{z_j -q_j}  
- \frac{v'(q_j)\left(v(\mathrm{tr}(A_{\alpha}))  \right)}{z_j -q_j}\\
&= -\frac{ \left( v(\zeta_j) \gamma_{\alpha_{q_j}} +
v(\mathrm{tr}(A_{\alpha})) \right) v'(q_j) }{z_j -q_j} 
+ \frac{v(q_j) \left(v'(\mathrm{tr}(A_{\alpha_{q_j}}))
+v'(\zeta_i)\gamma_{\alpha_{q_j}} \right)}{z_j -q_j}  .
\end{aligned}
\end{equation}
Now we consider the difference between 
$v(\mathrm{tr}(A_{\alpha_{q_j}}))$ and $v(\mathrm{tr}(A_{\alpha}))$.
So we consider infinitesimal deformation of $(\mathrm{det}(\nabla) , \mathrm{tr}(\nabla))$.
We have that
$$
\mathrm{det} (B_{\alpha\alpha_{q_j}}) 
=  \mathrm{det}\left(
\begin{pmatrix}
1 & 0 \\
0 & ( (\omega_{\alpha}^{-1})^{-1} \circ  \omega_{\alpha_{q_j}}^{-1} )
\end{pmatrix}
\begin{pmatrix}
1 & \frac{\zeta_j}{z_j-q_j} \\
0&   \frac{1}{z_j-q_j}
\end{pmatrix} \right)
=  \frac{( (\omega_{\alpha}^{-1})^{-1} \circ  \omega_{\alpha_{q_j}}^{-1} )}{z_j-q_j}.
$$
Here $B_{\alpha\alpha_{q_j}}$ is calculated in \eqref{2023_3_12_23_33}.
Set 
\begin{equation}\label{2023_3_18_22_48}
u^{\mathrm{det}}_{\alpha\alpha_{q_j}}(v)
:= \mathrm{det} (B_{\alpha\alpha_{q_j}})^{-1}  v(\mathrm{det} (B_{\alpha\alpha_{q_j}})) 
= \frac{v(q_j)}{z_j -q_j} . 
\end{equation}
Here remark that 
$( (\omega_{\alpha}^{-1})^{-1} \circ  \omega_{\alpha_{q_j}}^{-1} )$
is independent of the moduli space 
$M_{X}^0$.
We have a cocycle condition
$$
v(\mathrm{tr}(A_{\alpha_{q_j}}))
-v(\mathrm{tr}(A_{\alpha}))
= \mathrm{tr} (\nabla) \circ u^{\mathrm{det}}_{\alpha\alpha_{q_j}}
- u^{\mathrm{det}}_{\alpha\alpha_{q_j}} \circ \mathrm{tr} (\nabla) .
$$
So we have 
$$
v(\mathrm{tr}(A_{\alpha_{q_j}}))
-v(\mathrm{tr}(A_{\alpha}))
= 
\operatorname{d}
\left(  \frac{v(q_j)}{z_j -q_j} \right) = - \frac{v(q_j)\operatorname{d}\!z_j}{(z_j -q_j)^2} . $$
By applying this difference to \eqref{2023_3_12_23_44},
we have that 
\begin{equation}\label{2023_3_13_9_7}
\begin{aligned}
& \mathrm{tr} (u_{\alpha\alpha_{q_j}}(v) v_{\alpha_{q_j}}(v')) 
- \mathrm{tr} (v_{\alpha} (v) u_{\alpha\alpha_{q_j}}(v')) \\
&=
-\frac{ \left( v(\zeta_j) \gamma_{\alpha_{q_j}} +
v(\mathrm{tr}(A_{\alpha_{q_j}}))  \right) v'(q_j) }{z_j -q_j} 
+ \frac{v(q_j) \left(v'(\mathrm{tr}(A_{\alpha_{q_j}}))
+v'(\zeta_i)\gamma_{\alpha_{q_j}} \right)}{z_j -q_j}  
- \frac{ v(q_j )  v'(q_j) \operatorname{d}\! z_j}{(z_j -q_j)^3} .
\end{aligned}
\end{equation}
So we may extend the 1-form 
$$
\mathrm{tr} (u_{\alpha\alpha_{q_j}}(v) v_{\alpha_{q_j}}(v')) 
- \mathrm{tr} (v_{\alpha} (v) u_{\alpha\alpha_{q_j}}(v'))
$$ 
from $U_{\alpha\alpha_{q_j}}$ to $U_{\alpha_{q_j}}$ by \eqref{2023_3_13_9_7}.
Then we have a meromorphic 1-form defined on $U_{\alpha_{q_j}}$,
which has a pole at $q_j$.
We denote by $\omega_{\alpha_{q_j}}(v,v') $
the meromorphic 1-form defined on $U_{\alpha_{q_j}}$.

Next we consider the case
where 
$\alpha \in I_{\mathrm{cov}}\setminus (I^t_{\mathrm{cov}} \cup I^q_{\mathrm{cov}})$
and $\beta = \alpha_{t_i}$.
We have the following equalities
$$
\begin{aligned}
& \mathrm{tr} (u_{\alpha\alpha_{t_i}}(v) v_{\alpha_{t_i}}(v')) 
- \mathrm{tr} (v_{\alpha} (v)  u_{\alpha\alpha_{t_i}}(v')) \\
&=
\mathrm{tr} \left( 
(g^{t_i}_{\alpha_{t_i}})^{-1} v(g^{t_i}_{\alpha_{t_i}})
v'(A_{\alpha_{t_i}})
\right)  - \mathrm{tr} \left(
\left( (g^{t_i}_{\alpha_{t_i}} )^{-1} v(A_{\alpha}) g^{t_i}_{\alpha_{t_i}}\right)
(g^{t_i}_{\alpha_{t_i}})^{-1} v'(g^{t_i}_{\alpha_{t_i}})
\right)
\end{aligned}
$$
We have the cocycle condition 
$$
\begin{aligned}
&v(A_{\alpha_{t_i}}) - (g^{t_i}_{\alpha_{t_i}})^{-1}v(A_{\alpha}) (g^{t_i}_{\alpha_{t_i}}) \\
&= (\operatorname{d} + A_{\alpha_{t_i}})
 \circ \left( (g^{t_i}_{\alpha_{t_i}})^{-1} v(g^{t_i}_{\alpha_{t_i}}) \right)
-\left( (g^{t_i}_{\alpha_{t_i}})^{-1} v(g^{t_i}_{\alpha_{t_i}}) \right) \circ 
(\operatorname{d} + A_{\alpha_{t_i}})\\
&= \operatorname{d}  \left( (g^{t_i}_{\alpha_{t_i}})^{-1} v(g^{t_i}_{\alpha_{t_i}}) \right)
+
\left[ \, A_{\alpha_{t_i}} , \, 
\left( (g^{t_i}_{\alpha_{t_i}})^{-1} v(g^{t_i}_{\alpha_{t_i}}) \right) 
\, \right].
\end{aligned}
$$
By this condition, we have 
\begin{equation}\label{2023_3_13_17_15}
\begin{aligned}
& \mathrm{tr} (u_{\alpha\alpha_{t_i}}(v) v_{\alpha_{t_i}}(v')) 
- \mathrm{tr} (v_{\alpha} (v)  u_{\alpha\alpha_{t_i}}(v')) \\
&=
\mathrm{tr} \left( 
(g^{t_i}_{\alpha_{t_i}})^{-1} v(g^{t_i}_{\alpha_{t_i}})
v'(A_{\alpha_{t_i}})
\right)  - \mathrm{tr} \left(
v(A_{\alpha_{t_i}})(g^{t_i}_{\alpha_{t_i}})^{-1}
 v'(g^{t_i}_{\alpha_{t_i}})
\right) \\
&\qquad +
\mathrm{tr} \left( \left(
 \operatorname{d}  \left( (g^{t_i}_{\alpha_{t_i}})^{-1} v(g^{t_i}_{\alpha_{t_i}}) \right)
+
\left[ \, A_{\alpha_{t_i}} , \, 
\left( (g^{t_i}_{\alpha_{t_i}})^{-1} v(g^{t_i}_{\alpha_{t_i}}) \right) \right] \right)
(g^{t_i}_{\alpha_{t_i}})^{-1} v'(g^{t_i}_{\alpha_{t_i}})
\right)
\end{aligned}
\end{equation}
So we may extend the 1-form 
$$
\mathrm{tr} (u_{\alpha\alpha_{t_i}}(v) v_{\alpha_{t_i}}(v')) 
- \mathrm{tr} (v_{\alpha} (v) u_{\alpha\alpha_{t_i}}(v'))
$$ 
from $U_{\alpha\alpha_{t_i}}$ to $U_{\alpha_{t_i}}$ by \eqref{2023_3_13_17_15}.
Since we have the vanishing of the lower terms \eqref{2023_3_13_17_13},
the extended 1-form defined on $U_{\alpha_{t_i}}$
is holomorphic.
We denote by $\omega_{\alpha_{t_i}}(v,v') $
the holomorphic 1-form defined on $U_{\alpha_{t_i}}$.

For $\alpha \in I_{\mathrm{cov}}\setminus (I^t_{\mathrm{cov}} \cup I^q_{\mathrm{cov}})$,
we set $\omega_{\alpha}(v,v') =0$.
By \eqref{2023_3_13_9_7} and \eqref{2023_3_13_17_15},
we have a meromorphic coboundary $\{ \omega_{\alpha}(v,v')\}_{\alpha}$ 
of
$$
\{ \mathrm{tr} (u_{\alpha\beta}(v) \circ v_{\beta}(v')) 
- \mathrm{tr} (v_{\alpha} (v) \circ u_{\alpha\beta}(v'))\}_{\alpha\beta}. 
$$
So we have
$$
\begin{aligned}
H^1(C,\Omega_C^1)
&\xrightarrow{ \ \cong  \ }\mathbb{C} \\
[ -\{ \mathrm{tr} (u_{\alpha\beta}(v) \circ v_{\beta}(v')) 
- \mathrm{tr} (v_{\alpha} (v) \circ u_{\alpha\beta}(v'))\}_{\alpha\beta} ]
&\longmapsto 
\sum_{x \in C }
- \mathrm{res}_{x} \left(\omega_{\alpha}(v,v') \right) .
\end{aligned}
$$
By taking the residues of the right hand sides of
\eqref{2023_3_13_9_7} and \eqref{2023_3_13_17_15},
we have that 
$$
\begin{aligned}
-\sum_{x \in C} \mathrm{res}_{x} \left(\omega_{\alpha}(v,v') \right)
&=\sum_{j=1}^N
\mathrm{res}_{q_j} \left(
\frac{ \left( v(\zeta_j) \gamma_{\alpha_{q_j}} +
v(\mathrm{tr}(A_{\alpha_{q_j}}))  \right) v'(q_j) }{z_j -q_j} \right) \\
&\qquad 
- \sum_{j=1}^N\mathrm{res}_{q_j} \left( \frac{v(q_j) \left(v'(\mathrm{tr}(A_{\alpha_{q_j}}))
+v'(\zeta_i)\gamma_{\alpha_{q_j}} \right)}{z_j -q_j}  \right)  \\
&=
\sum_{j=1}^N \left( v(p_j) v'(q_j) - v(q_j) v'(p_j)  \right)
=
\left( \sum^N_{j=1} d p_j \wedge dq_j \right) (v,v').
\end{aligned}
$$
Here remark that 
$\gamma_{\alpha}$
is independent of the moduli space $M_{X}^0$ 
for any $\alpha$.
\end{proof}

By the map $f_{\mathrm{App}}$, we have concrete canonical coordinates as follows.
We take an analytic open subset $V$ of $M^0_X$ at a point $(E,\nabla)$,
which is small enough.
We define functions $q_j$ and $p_j$ ($j=1,2,\ldots,N$) on $V$ as follows.
(So, here, the notation $q_j$ has a double meaning).
Let $U_{\alpha_{q_j}}$ be an analytic open subset of $C$ such that 
$U_{\alpha_{q_j}}$ contains 
the apparent singularity $q_j$ of the point $(E,\nabla)$ 
and is small enough.
Let $q_j'$ be the apparent singularity of each $(E',\nabla') \in V$,
where $q_j' \in U_{\alpha_{q_j}}$.
First we take a local coordinate $z_j$ on $U_{\alpha_{q_j}}$.
By evaluating the apparent singularity $q_j'$ by the local coordinate $z_j$
for each $(E',\nabla') \in V$,
we have a function $q_j \colon V \rightarrow \mathbb{C}$.
Second, let $(E_V, \nabla_V)$ be a vector bundles on $C \times V$,
which is a family of vector bundles on $C$ parametrized by $V$.
We take a trivialization of $\det(E_V)$ on $U_{\alpha_{q_j}} \times V$
which depends on only $q_j \colon V \rightarrow \mathbb{C}$ 
(which is described in \eqref{2023_8_24_14_00}).
We take the connection matrix of $\mathrm{tr} (\nabla_V)$ 
with respect to the local trivialization. 
Let $\boldsymbol{\Omega}(D,c_d)_{V} \rightarrow C \times V$ be 
the relative twisted cotangent bundle over $V$
with respect to the family of line bundles $\det(E_V)$ on $C \times V$.
We have an identification between 
$\boldsymbol{\Omega}(D,c_d)_V$ and
$\boldsymbol{\Omega}(D) \times V $ on $U_{\alpha_{q_j}} \times V$ 
that depends only on $q_j \colon V \rightarrow \mathbb{C}$.
By evaluating 
$\mathrm{res}_{q_j'} (\beta') + \mathrm{tr}(\nabla')|_{q_j'}$
by the identification 
$\boldsymbol{\Omega}(D,c_d)|_{q_j'} \cong
\boldsymbol{\Omega}(D)|_{q_j'} \cong \mathbb{C}$
for each $(E',\nabla') \in V$,
we have a function $p_j \colon V \rightarrow \mathbb{C}$.
This is just \eqref{2023_7_12_16_43}.
That is, this is the following composition:
$$
\begin{aligned}
V &\longrightarrow U_{\alpha_{q_j}} \times V 
\longrightarrow
\boldsymbol{\Omega}(D,c_d)_V|_{U_{\alpha_{q_j}} \times V} 
\longrightarrow \boldsymbol{\Omega}(D)|_{U_{\alpha_{q_j}}} 
\longrightarrow \mathbb{C} \\
(E',\nabla') &\longmapsto (q_j', (E',\nabla'))
\longmapsto 
\left( (\zeta_j \gamma_{\alpha_{q_j}} )_V
+ \mathrm{tr}(\nabla_V) \right)|_{(q_j', (E',\nabla'))}  \\
&\qquad 
\longmapsto \left(  \zeta_j' \gamma_{\alpha_{q_j}}
+ \mathrm{tr}(A'_{\alpha_{q_j}}) \right)|_{q_j'} 
\longmapsto 
\mathrm{res}_{q'_j} \left( \frac{\zeta'_j  \gamma_{\alpha_{q_j}} 
+\mathrm{tr}(A'_{\alpha_{q_j}}) }{z_j-q'_j} 
\right).
\end{aligned}
$$
By Theorem \ref{2023_8_22_12_09}, the symplectic structure on $V$ 
has the following description: $\sum_{j=1}^N \operatorname{d}\! p_j \wedge \operatorname{d}\! q_j$.

\begin{remark}
We set 
\begin{equation*}
p_j^0 := \mathrm{res}_{q_j} \left( \frac{\zeta_j \gamma_{\alpha_{q_j}} }{z_j-q_j} \right)
\in \mathbb{C}.
\end{equation*}
If $g=0$, then $\mathrm{res}_{q_j}\left( \frac{\mathrm{tr}(A_{\alpha_{q_j}})}{z_j-q_j} \right)$
depends on only $q_j$.
So we have $\sum^N_{j=1} \operatorname{d}\! p_j \wedge \operatorname{d}\! q_j 
=\sum^N_{j=1} \operatorname{d}\! p^0_j \wedge \operatorname{d}\! q_j$.
Here the symplectic form $\sum^N_{j=1} \operatorname{d}\! p^0_j \wedge \operatorname{d}\! q_j$
is induced by the symplectic form on $\mathrm{Sym}^N (\boldsymbol{\Omega}(D))_0$.
\end{remark}

\begin{remark}
In general, $\sum^N_{j=1} \operatorname{d}\! p_j \wedge \operatorname{d}\! q_j 
\not= \sum^N_{j=1} \operatorname{d}\! p^0_j \wedge \operatorname{d}\! q_j$,
that is,
\begin{equation}\label{2023_3_18_22_44}
\sum_{j}
\operatorname{d}\! \left(\mathrm{res}_{q_j}\left( \frac{\mathrm{tr}(A_{\alpha_{q_j}})}{z_j-q_j} \right) \right)
\wedge \operatorname{d}\! q_j
\end{equation}
does not vanish.
This is related to the determinant map 
$$
\begin{aligned}
M_{X}^0 &\longrightarrow 
M^{\mathrm{rk}=1}_X(\boldsymbol{\nu}_{\textrm{res}}) \\
(E,\nabla,\{l^{(i)}\}) &\longmapsto (\mathrm{det}(E), \mathrm{tr}(\nabla)).
\end{aligned}
$$
The 2-form \eqref{2023_3_18_22_44} comes from
$$
\left[\{ u^{\mathrm{det}}_{\alpha\beta}(v)u^{\mathrm{det}}_{\beta\gamma}(v')
 \} , -\{
 u^{\mathrm{det}}_{\alpha\beta}(v)v'(\mathrm{tr}(A_{\beta}))
- v(\mathrm{tr}(A_{\alpha})) u^{\mathrm{det}}_{\alpha\beta}(v') \}  \right] \in 
 \bH^2 (\mathcal{O}_C \rightarrow \Omega^1_C).
$$
Here $u^{\mathrm{det}}_{\alpha\beta}(v)$ is defined as in
\eqref{2023_3_18_22_48}.
This class gives rise to the $2$-form on $M_{X}^0$
which is just the pull-back of the natural symplectic form on 
$M^{\mathrm{rk}=1}_X(\boldsymbol{\theta}_{\textrm{res}})$ 
under the determinant map.
The determinant map is not degenerate in general.
So the class \eqref{2023_3_18_22_44} does not vanish in general.
\end{remark}

\section{Symplectic structure on the moduli space 
with fixed trace connection}\label{sect:sympl_FixedDet}

In this section, we consider 
the moduli spaces of rank 2 quasi-parabolic connections 
{\it with fixed trace connection}.
When the effective divisor $D$ is reduced, this moduli space is detailed in 
\cite{AL}, \cite{LS} (when $g=0$),
\cite{FL}, \cite{FLM} (when $g=1$),
and \cite{Matsu} (when $g\geq 1$).
The moduli spaces of rank 2 quasi-parabolic connections 
with fixed trace connection has 
a natural symplectic structure described as in Section \ref{subsect:symplecticGL2}.
The purpose of this section is to give coordinates on some generic part of the moduli space 
and to describe the natural symplectic structure by using the coordinates.
As in the case where the effective divisor $D$ is reduced 
(\cite{LS}, \cite{FL}, \cite{FLM}, \cite{Matsu}),  
we may define the map forgetting connections and 
the apparent map.
These maps are from a generic part of the moduli space
to projective spaces.
These maps will give our coordinates on the generic part of the moduli space.
First we describe these maps.

\subsection{Moduli space of quasi-parabolic bundles with fixed determinant}\label{SubSect:ModuliParaBunSL2}

To describe the map forgetting connections,
we recall the moduli space of quasi-parabolic bundles.
The moduli space of (quasi-)parabolic bundles 
was introduced in Mehta--Seshadri \cite{MS}.
Yokogawa generalized this notion to (quasi-)parabolic sheaves and studied their moduli \cite{Yoko1}.

Let $\nu$ be a positive integer. 
Set $I:=\{ 1,2,\ldots,\nu\}$.
Let $C$ be a compact Riemann surface of genus $g$, 
and $D = \sum_{i\in I} m_i [t_i]$ be an effective divisor on $C$.
We assume $3g-3+n>0$ where $n=\operatorname{length}(D)$.
Let
$z_i$ be a generator of the maximal ideal of $\mathcal{O}_{C,t_i}$.
We fix a line bundle $L_0$ with $\deg(L_0) =2g-1$.

\begin{definition}
We say $(E,  \{l^{(i)}\} )$ 
a {\rm rank $2$ quasi-parabolic bundle with determinant $L_0$ over $(C,D)$} if 
\begin{itemize}
\item[(i)] $E$ is a rank $2$ vector bundle of degree $2g-1$ on $C$ with 
$\det(E) \cong L_0$, and

\item[(ii)] $E|_{m_i[t_i]}  \supset l^{(i)} \supset 0$
is a filtration by free
$\mathcal{O}_{m_i[t_i]}$-modules such that 
$E|_{m_i[t_i]}/ l^{(i)}\cong \mathcal{O}_{m_i[t_i]}$ and
$l^{(i)}\cong \mathcal{O}_{m_i[t_i]}$
for any $i \in I$.
\end{itemize}
\end{definition}

We fix weights 
$\boldsymbol{w} 
= (w_1,\ldots , w_{\nu})$ such that $w_i \in [0,1]$ for any $i \in I$.
When $g=0$, we assume that $(w_i)_{i\in I}$ satisfies
\begin{equation}\label{2023_7_10_11_59(1)}
w_1=\cdots = w_{\nu}
\quad \text{and}\quad \frac{1}{\deg(D)} <w_i 
< \frac{1}{\deg(D)-2}.
\end{equation}
When $g\geq 1$, we assume that $(w_i)_{i\in I}$ satisfy 
\begin{equation}\label{2023_7_10_11_59(2)}
0 <w_i \ll 1.
\end{equation}

\begin{definition}
Let $(E,  \{l^{(i)}\} )$ be
a rank $2$ quasi-parabolic bundle with determinant $L_0$.
Let $L$ be a line subbundle of $E$.
We define the $\boldsymbol{w}$-stability index of $L$ to be the real number
$$
\mathrm{Stab}_{\boldsymbol{w} } (L) := 
\deg (E) - 2 \deg(L) 
+  \sum_{i\in I}   w_{i} \left(m_i - 2  \,
\mathrm{length}   
(l_{i}\cap L|_{m_i[t_i]})  \right).
$$
\end{definition}

\begin{definition}
A rank $2$ quasi-parabolic bundle $(E,  \{l^{(i)}\} )$ is {\rm $\boldsymbol{w}$-stable} if
for any subbundle $L \subset E$,
the inequality $\mathrm{Stab}_{\boldsymbol{w}}(L)>0$ holds.
\end{definition}

We say that a quasi-parabolic bundle
$(E, \{ l^{ (i) } \} )$ is decomposable if
there exists a decomposition $E =L_1 \oplus L_2$ such that
$l^{(i)} = l^{(i)}_{1}$ or $l^{(i)} = l^{(i)}_{2}$ for any $i\in I$,
where we set $l^{(i)}_{1} := l^{(i)}\cap (L_1|_{m_i[t_i]})$
and $l^{(i)}_{2} := l^{(i)}\cap (L_2|_{m_i[t_i]})$.
We say that $(E, \{ l^{ (i)} \} )$ is  undecomposable if
$(E, \{ l^{ (i) } \} )$ is not decomposable.
A free $\mathcal{O}_{m_i [t_i]}$-submodule $l^{ (i) }$ of $E|_{m_i [t_i]}$
induces a one dimensional subspace $l^{(i)}_{\mathrm{red}}$ of $E|_{t_i}$, that is the restriction
of $l^{(i)}$ to $t_i$ (without multiplicity).

\begin{lem}\label{2023_7_10_12_15}
Let $(E,  \{l^{(i)}\} )$ be
a rank $2$ quasi-parabolic bundle with determinant $L_0$.
If 
\begin{itemize}

\item[(i)]
$E$ is an extension of ${L_0}$ by $\mathcal{O}_C$
(when $g=0$, moreover we assume that $(E,  \{l^{(i)}\} )$ is undecomposable)

\item[(ii)]
$\dim_{\mathbb{C}} H^1(C,E) =0$

\item[(iii)]
$l_{\text{red}}^{(i)}  \not\in \mathcal{O}_{C}|_{t_i}
\subset \mathbb{P}(E)$ for any $i$,

\end{itemize}
then $(E,  \{l^{(i)}\} )$ is $\boldsymbol{w}$-stable.
\end{lem}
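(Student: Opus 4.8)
The plan is to verify the stability inequality $\mathrm{Stab}_{\boldsymbol{w}}(L) > 0$ directly for every line subbundle $L \subset E$, which suffices because $E$ has rank $2$ and its only proper subbundles have rank $1$. Writing $\ell_i := \operatorname{length}(l^{(i)} \cap L|_{m_i[t_i]}) \in \{0, 1, \ldots, m_i\}$ and recalling $\deg(E) = \deg(L_0) = 2g-1$, the quantity to control is
$$\mathrm{Stab}_{\boldsymbol{w}}(L) = (2g-1) - 2\deg(L) + \sum_{i \in I} w_i(m_i - 2\ell_i),$$
whose integer ``main term'' $(2g-1) - 2\deg(L)$ is odd, hence never zero, and whose ``parabolic term'' obeys $\bigl|\sum_i w_i(m_i - 2\ell_i)\bigr| \le \sum_i w_i m_i$. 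First I would record the consequence of (i) and (ii): by Riemann--Roch $\chi(E) = \deg(E) + 2(1-g) = 1$, so $H^1(C,E)=0$ forces $\dim_{\mathbb{C}} H^0(C,E) = 1$, the unique section $s_E$ being the nowhere-vanishing one coming from the subbundle inclusion $\mathcal{O}_C \hookrightarrow E$ of (i). The argument then splits according to $g \ge 1$ and $g = 0$.

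For $g \ge 1$ the plan is to bound $\deg(L) \le g-1$ and let the main term dominate. Since $L \hookrightarrow E$ induces an injection $H^0(C,L) \hookrightarrow H^0(C,E)$, we get $h^0(L) \le 1$; if $\deg(L) \ge g+1$ then Riemann--Roch gives $h^0(L) \ge 2$, a contradiction, so $\deg(L) \le g$. The borderline case $\deg(L) = g$ I would exclude thus: then $h^0(L) = 1$ and its generator $s_L$ maps to a nonzero multiple of $s_E$, so by naturality of evaluation $s_L$ vanishes only where $s_E$ does, namely nowhere, whence $\deg(L) = 0 \ne g$. Therefore $\deg(L) \le g-1$, the main term is $\ge 1$, and with $0 < w_i \ll 1$ small enough that $\sum_i w_i m_i < 1$ we obtain $\mathrm{Stab}_{\boldsymbol{w}}(L) \ge 1 - \sum_i w_i m_i > 0$; note that hypothesis (iii) is not needed in this range.

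For $g = 0$ the weights lie in the tight window $\tfrac{1}{\deg(D)} < w_i = w < \tfrac{1}{\deg(D)-2}$ and the estimate is delicate. From (ii) and $\deg(L_0)=-1$ one first shows $E \cong \mathcal{O}_{\mathbb{P}^1} \oplus \mathcal{O}_{\mathbb{P}^1}(-1)$, so every line subbundle has $\deg(L) \le 0$, with $\deg(L) = 0$ only for $L = \mathcal{O}_C$. For $L = \mathcal{O}_C$ I would invoke (iii): the transversality $l_{\mathrm{red}}^{(i)} \ne \mathcal{O}_C|_{t_i}$ in the fibre upgrades to $l^{(i)} \cap \mathcal{O}_C|_{m_i[t_i]} = 0$ (in a local frame realizing $\mathcal{O}_C|_{m_i[t_i]}$ as the first factor, transversality makes the relevant off-diagonal coefficient a unit, so two free rank-$1$ summands meet in $0$), giving $\ell_i = 0$ and $\mathrm{Stab}_{\boldsymbol{w}}(\mathcal{O}_C) = -1 + w\deg(D) > 0$ since $w > 1/\deg(D)$. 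For $\deg(L) = -1$ one checks such an $L$ is a splitting of the extension and $\mathrm{Stab}_{\boldsymbol{w}}(L) = 1 + w(\deg(D) - 2\sum_i \ell_i)$; because $\sum_i \ell_i \le \deg(D)$ with equality iff every $\ell_i = m_i$, positivity can only fail when $\sum_i \ell_i = \deg(D)$, i.e. $l^{(i)} = L|_{m_i[t_i]}$ for all $i$ — but then $E = \mathcal{O}_C \oplus L$ with all parabolic lines on the $L$-summand, contradicting the undecomposability assumed in (i); otherwise $\sum_i \ell_i \le \deg(D)-1$ yields $w(2\sum_i\ell_i - \deg(D)) \le w(\deg(D)-2) < 1$. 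Finally $\deg(L) \le -2$ is immediate, as the main term is then $\ge 3$ while the parabolic term exceeds $-w\deg(D) > -\tfrac{\deg(D)}{\deg(D)-2} \ge -2$, using $\deg(D) \ge 4$.

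The main obstacle will be the $g = 0$, $\deg(L) = -1$ case: the weight window is engineered precisely so that this estimate is sharp, and the one remaining bad configuration (all intersection lengths maximal) must be eliminated, which is exactly the role of the undecomposability hypothesis. The subsidiary technical point needing care is the Artinian-local computation showing that transversality of the fibre reductions forces a length-zero intersection of the free rank-one $\mathcal{O}_{m_i[t_i]}$-submodules; everything else reduces to the Riemann--Roch bookkeeping sketched above.
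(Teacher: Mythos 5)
Your proof is correct and follows the same overall strategy as the paper's: for $g\ge 1$ one reduces to stability of the underlying bundle $E$ (so that the arbitrarily small weights of \eqref{2023_7_10_11_59(2)} cannot spoil the odd integer main term), while for $g=0$ one exploits the weight window \eqref{2023_7_10_11_59(1)} together with hypothesis (iii) for $L=\mathcal{O}_C$ and undecomposability for the degree $-1$ subbundles. The only difference is that the paper disposes of the two cases by citing \cite[Proposition 46]{KLS} and \cite[Lemma 4.2]{Matsu}, whereas you carry out the arguments in full; your details (in particular the exclusion of $\deg(L)=g$ via the nowhere-vanishing section coming from $h^0(E)=1$, and the Artinian-local computation showing that (iii) forces $\length(l^{(i)}\cap \mathcal{O}_C|_{m_i[t_i]})=0$) all check out.
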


\begin{proof}
When $g=0$, we have this statement from \cite[Proposition 46]{KLS}
by the condition \eqref{2023_7_10_11_59(1)}.
When $g\geq 1$, we have that $E$ is stable, that is, $\deg(E) - 2\deg(L)$ is a positive integer 
for any line subbundle $L \subset E$.
This claim follows from the same argument as in \cite[Lemma 4.2]{Matsu}.
Since $0 <w_i \ll 1$ in \eqref{2023_7_10_11_59(2)}, 
we have that $\mathrm{Stab}_{\boldsymbol{w}}(L)>0$.
\end{proof}

Let $P_{(C,D)}^{\boldsymbol{w}}$ be 
a moduli space of $\boldsymbol{w}$-stable quasi-parabolic bundles
constructed in \cite{Yoko1}.
Let $P_{(C,D)}^{\boldsymbol{w}}(L_0)$ be the fiber of $L_0$ under the 
determinant map 
$$
P_{(C,D)}^{\boldsymbol{w}} \longrightarrow \mathrm{Pic}_C^{2g-1}; 
\quad (E,  \{l^{(i)}\} ) \longmapsto \det(E).
$$
We set 
$$
P_{(C,D)}(L_0)_0 := 
\left\{ (E,\{ l^{(i)}\} )  
\ \middle| \ 
\begin{array}{l}
\text{$(E,\{ l^{(i)}\} )$ is rank $2$ quasi-parabolic bundle over $(C,D)$ such that}\\
\text{(i) $\det (E) \cong L_0$,\quad 
(ii) $E$ is an extension of ${L_0}$ by $\mathcal{O}_C$,} \\
\text{(iii) $\dim_{\mathbb{C}} H^1(C,E) =0$,\quad 
(iv) $l_{\text{red}}^{(i)}  \not\in \mathcal{O}_{C}|_{t_i}
\subset \mathbb{P}(E)$ for any $i$,} \\
\text{(v) $(E,\{ l^{(i)}\} )$ is undecomposable (when $g=0$)}
\end{array}
\right\}.
$$
By Lemma \ref{2023_7_10_12_15}, we have an inclusion 
$$
P_{(C,D)}(L_0)_0 \subset P_{(C,D)}^{\boldsymbol{w}}(L_0).
$$
For $(E,\{ l^{(i)}\} ) \in P_{(C,D)}(L_0)_0$, we have an extension 
\begin{equation}\label{2023_7_10_13_56}
0 \longrightarrow 
\mathcal{O}_C \longrightarrow
E \longrightarrow
L_0 \longrightarrow 0.
\end{equation}
Since $\dim_{\mathbb{C}} H^1(C,E) =0$, we have that $\dim_{\mathbb{C}} H^0(C,E) =1$.
So the injection $\mathcal{O}_C \xrightarrow{\subset} E$ 
in \eqref{2023_7_10_13_56} is unique up to a constant.

\begin{definition}\label{2023_7_10_12_29}
Let $(E,\{ l^{(i)}\} ) \in P_{(C,D)}(L_0)_0$.
We take an affine open covering $\{ U_{\alpha} \}_{\alpha}$ of $C$, i.e. $C = \bigcup_{\alpha} U_{\alpha}$. 
Let $\{\varphi_{\alpha}^{\mathrm{Ext}}\}_{\alpha} $ be trivializations
$\varphi_{\alpha}^{\mathrm{Ext}} \colon \mathcal{O}_{U_{\alpha}}^{\oplus 2} 
\rightarrow E|_{U_{\alpha}}$ of 
the underlying vector bundle $E$
such that 
\begin{itemize}

\item[(i)]
the composition
$$
\begin{aligned}
 \mathcal{O}_{U_{\alpha}} &\longrightarrow \mathcal{O}_{U_{\alpha}}^{\oplus 2} 
\xrightarrow{ \varphi_{\alpha}^{\mathrm{Ext}} } E|_{U_{\alpha}}  \\
f &\longmapsto (f,0)
\end{aligned}
$$
is just the inclusion $\mathcal{O}_C \subset E$ of
the extension \eqref{2023_7_10_13_56} for any $\alpha$, and

\item[(ii)] the image of the composition
$$
\begin{aligned}
 \mathcal{O}_{U_{\alpha}} &\longrightarrow \mathcal{O}_{U_{\alpha}}^{\oplus 2} 
\xrightarrow{ \varphi_{\alpha}^{\mathrm{Ext}} } E|_{U_{\alpha}} 
\longrightarrow E|_{m_i[t_i]} \\
f &\longmapsto (0,f)
\end{aligned}
$$
generates the submodule $l^{(i)} \subset E|_{m_i[t_i]}$ 
for each $i$ and $\alpha$ where $t_i \in U_{\alpha}$.
\end{itemize}
\end{definition}

Notice that 
the claim that we may take $\varphi_{\alpha}^{\mathrm{Ext}}$ 
which satisfies the condition (ii) of Definition \ref{2023_7_10_12_29}
follows from the condition that 
$l_{\text{red}}^{(i)}  \not\in \mathcal{O}_{C}|_{t_i}
\subset \mathbb{P}(E)$ for any $i$.

Now we define a map
\begin{equation}\label{2023_7_10_12_44}
P_{(C,D)}(L_0)_0
\longrightarrow \mathbb{P} H^1(C, L_0^{-1}(-D))
\end{equation}
as follows.
Let $\{\varphi_{\alpha}^{\mathrm{Ext}}\}_{\alpha} $ be
the trivializations in Definition \ref{2023_7_10_12_29}.
We have the transition matrices
$$
B_{\alpha\beta} := (\varphi_{\alpha}^{\mathrm{Ext}} |_{U_{\alpha\beta}})^{-1} \circ
\varphi_{\beta}^{\mathrm{Ext}} |_{U_{\alpha\beta}}
\colon \mathcal{O}_{U_{\alpha\beta}}^{\oplus 2} 
\longrightarrow \mathcal{O}_{U_{\alpha\beta}}^{\oplus 2} .
$$
We represent $B_{\alpha\beta}$ as a matrix:
\begin{equation}\label{2023_7_10_20_51(1)}
B_{\alpha\beta} = 
\begin{pmatrix}
1 & b^{12}_{\alpha\beta} \\
0 & b^{22}_{\alpha\beta}
\end{pmatrix}.
\end{equation}
Remark that $\{ b^{22}_{\alpha\beta}\}_{\alpha\beta}$ is a 
multiplicative cocycle which defines the fixed line bundle $L_0$.
We take 
a meromorphic coboundary 
\begin{equation}\label{2023_7_10_12_37}
\{ b^{22}_{\alpha}   \}_{\alpha}  \quad 
\left(
\text{where  } b^{22}_{\alpha\beta} = \frac{b^{22}_{\alpha}}{b^{22}_{\beta}} \right)
\end{equation}
of 
the multiplicative cocycle 
$\{ b^{22}_{\alpha\beta}\}_{\alpha\beta}$.
By using the coboundary $\{ b^{22}_{\alpha}   \}_{\alpha} $,
we define a cocycle 
\begin{equation}\label{2023_7_10_13_12(1)}
b^{\text{Bun}}_{\alpha\beta} :=  b^{12}_{\alpha\beta}b^{22}_{\alpha},
\end{equation}
which gives a class
$[\{ b^{\text{Bun}}_{\alpha\beta}\} ] \in H^1(C, L_0^{-1} (-D))$.
Then we have a map \eqref{2023_7_10_12_44}:
$$
(E,\{ l^{(i)}\} )
\longmapsto
\overline{[\{ b^{\text{Bun}}_{\alpha\beta}\} ]}.
$$

\subsection{Moduli space of quasi-parabolic connections 
with fixed trace connection}\label{SubSect:ModuliSL2}

Now we recall the moduli space of quasi-parabolic connections.
We fix an irregular curve with residues 
$X=(C,D,\{ z_i \} , \{\boldsymbol{\theta}_{i} \}, \boldsymbol{\theta}_{\mathrm{res}})$
defined in Definition \ref{2023_7_14_23_01}.
Moreover we assume that 
\begin{equation}\label{2023_8_24_8_47}
\sum_{i \in I}\theta_{i,-1}^{-} \neq 0.
\end{equation}
Let $({L_0},\nabla_{L_0}
\colon {L_0} \rightarrow {L_0}\otimes \Omega^1_C(D))$ 
be a rank $1$ connection on $C$ with degree $2g-1$
such that the polar part of 
$\nabla_{L_0}$
at $t_i$ is
$\mathrm{tr} (\omega_i(X ))$.

\begin{definition}\label{2023_7_10_14_08}
We say $(E,\nabla , \lambda, \{l^{(i)}\} )$ 
a {\rm rank $2$ quasi-parabolic $\lambda$-connection over $X$ with fixed
trace connection $({L_0},\nabla_{L_0})$} if 
\begin{itemize}
\item[(i)] $E$ is a rank $2$ vector bundle on $C$ with $\det(E) \cong L_0$,

\item[(ii)] 
$\lambda \in \mathbb{C}$ and
$\nabla \colon E \rightarrow E \otimes \Omega^1_{C}(D)$ is a $\lambda$-connection
that is, 
$\nabla (fs ) = \lambda s \otimes df + f \nabla (s)$
for any $f \in \mathcal{O}_C$ and $s \in E$, and

\item[(iii)] $\nabla (s_1) \wedge s_2 + s_1 \wedge \nabla(s_2) = \lambda \nabla_L(s_1\wedge s_2) $
for $s_1,s_2 \in E$,

\item[(iv)] $E|_{m_i[t_i]}  \supset l^{(i)} \supset 0$
is a filtration by free
$\mathcal{O}_{m_i[t_i]}$-modules such that, for any $i \in I$, 

\begin{itemize} 

\item 
$E|_{m_i[t_i]}/ l^{(i)}\cong \mathcal{O}_{m_i[t_i]}$ and
$l^{(i)}\cong \mathcal{O}_{m_i[t_i]}$,

\item
$\nabla|_{m_i[t_i]} (l^{(i)}) \subset l^{(i)} \otimes \Omega^1_{C}(D)$, and 

\item the image of
$(E|_{m_i[t_i]}/ l^{(i)}) \oplus  l^{(i)} $
under
$\mathrm{Gr}_i (\nabla) - \lambda \cdot \omega_i(X ) $
is contained in
$$ 
\left( (E|_{m_i[t_i]}/ l^{(i)}) \oplus  l^{(i)}   \right)
\otimes \Omega^1_C.$$

\end{itemize}

\end{itemize}

Here $\mathrm{Gr}_i (\nabla)$ is the induced morphism 
$$
\mathrm{Gr}_i (\nabla)
\colon (E|_{m_i[t_i]}/ l^{(i)}) \oplus  l^{(i)} 
\longrightarrow \left( (E|_{m_i[t_i]}/ l^{(i)}) \oplus  l^{(i)}   \right)
\otimes \Omega^1_C(D).
$$
\end{definition}

Notice that, if $\lambda=0$, then $\nabla$ is an $\mathcal{O}_C$-morphism,
which is called a Higgs field.
So $(E,\nabla , \lambda, \{l^{(i)}\} )$ is called a
(trace free) quasi-parabolic Higgs bundle when $\lambda=0$.
We consider only rank $2$ quasi-parabolic $\lambda$-connections
$(E,\nabla , \lambda, \{l^{(i)}\} )$ over $X$ with
$({L_0},\nabla_{L_0})$
such that the underlying quasi-parabolic bundle $(E,\{l^{(i)}\} )$
is in the moduli space $P_{(C,D)}(L_0)_0$.

We define
the moduli spaces
$\widetilde{M}_X({L_0},\nabla_{L_0})_0$
and 
$M_X(L_0,  \nabla_{L_0})_0$
as follows:
$$
\widetilde{M}_X({L_0},\nabla_{L_0})_0
= \left. \left\{ 
\begin{array}{l}
(E,\nabla, \lambda, \{ l^{(i)}\} ) \\
\text{quasi-parabolic $\lambda$-connection} \\
\text{over $X$ with trace $({L_0},\nabla_{L_0})$} 
\end{array}
\ \middle| \ 
\begin{array}{l}
(E, \{ l^{(i)}\} )  \in P_{(C,D)}(L_0)_0
\end{array}
\right\} \right/ \cong
$$
and
$$
M_X(L_0,  \nabla_{L_0})_0
= \left. \left\{ 
\begin{array}{l}
(E,\nabla, \lambda, \{ l^{(i)}\} ) \\
\text{quasi-parabolic $\lambda$-connection} \\
\text{over $X$ with trace $({L_0},\nabla_{L_0})$} 
\end{array}
\ \middle| \ 
\begin{array}{l}
(E, \{ l^{(i)}\} )  \in P_{(C,D)}(L_0)_0  \\
\text{and }\lambda \neq 0 
\end{array}
\right\} \right/ \cong.
$$

\subsection{Maps from the moduli space}\label{SubSect:MapsSL2}

Now we describe two maps: 
the forgetful map $\pi_{\mathrm{Bun}}$ forgetting connections and 
the apparent map $\pi_{\mathrm{App}}$.
First we consider the composition 
$$
\widetilde{M}_X({L_0},\nabla_{L_0})_0
\longrightarrow P_{(C,D)}(L_0)_0 
\longrightarrow \mathbb{P} H^1( C, L_0^{-1}(-D)).
$$
Here the first map is the forgetful map, 
and the second map is \eqref{2023_7_10_12_44}.
We denote by 
$$
\pi_{\mathrm{Bun}} \colon
\widetilde{M}_X({L_0},\nabla_{L_0})_0
\longrightarrow \mathbb{P} H^1( C, L_0^{-1}(-D)).
$$
the composition.

Second we define a map
\begin{equation}\label{2023_7_10_13_02}
\pi_{\mathrm{App}} \colon 
\widetilde{M}_X({L_0},\nabla_{L_0})_0
\longrightarrow
 \mathbb{P} H^0(C, {L_0}\otimes \Omega^1_C(D)) 
\end{equation}
as follows.
Let $(E,\nabla, \lambda, \{ l^{(i)}\} )$ 
be a point on
$\widetilde{M}_X({L_0},\nabla_{L_0})_0$.
Let $\{\varphi_{\alpha}^{\mathrm{Ext}}\}_{\alpha} $ be
the trivializations in Definition \ref{2023_7_10_12_29}.
Let $A_{\alpha}$ be the connection matrix of the $\lambda$-connection 
$\nabla$ with respect to $\varphi_{\alpha}^{\mathrm{Ext}}$, that is, 
$$
\lambda \operatorname{d} + A_{\alpha} := (\varphi_{\alpha}^{\mathrm{Ext}} )^{-1} \circ
\nabla \circ
\varphi_{\alpha}^{\mathrm{Ext}} 
\colon \mathcal{O}_{U_{\alpha}}^{\oplus 2} 
\longrightarrow (\Omega^1_{U_{\alpha}}(D))^{\oplus 2} .
$$
We denote the matrix $A_{\alpha}$ as follows:
\begin{equation}\label{2023_7_10_20_51(2)}
A_{\alpha} = 
\begin{pmatrix}
a_{\alpha}^{11} & a^{12}_{\alpha} \\
a^{21}_{\alpha} & a^{22}_{\alpha}
\end{pmatrix}.
\end{equation}
By the condition (ii) in Definition \ref{2023_7_10_12_29}
and the condition (iv) in Definition \ref{2023_7_10_14_08},
the polar part of the connection matrix $A_{\alpha}$ at $t_i$ is 
a lower triangular matrix, that is, the Laurent expansion of $A_{\alpha}$
at $t_i$ is as follows:
\begin{equation}\label{2023_7_10_20_59}
A_{\alpha} =
\begin{pmatrix}
\lambda\nu_{i}^{-} & 0 \\
* &\lambda \nu_{i}^{+}
\end{pmatrix} \frac{1}{z_i^{m_i}} +[\text{ holo. part }].
\end{equation}
Here $\nu_{i}^{-}, \nu_{i}^{+} \in \Omega^1_C(D)|_{m_i[t_i]}$
are defined so that
$$
\lambda \cdot \omega_i(X )
=\begin{pmatrix}
\lambda\nu_{i}^{-} & 0 \\
0 &\lambda \nu_{i}^{+}
\end{pmatrix} .
$$
By using the coboundary $\{ b^{22}_{\alpha}   \}_{\alpha} $ in \eqref{2023_7_10_12_37},
we define cocycles
\begin{equation}\label{2023_7_10_13_12(2)}
a^{\text{App}}_{\alpha} :=  a^{21}_{\alpha}(b^{22}_{\alpha} )^{-1},
\end{equation}
which give a class 
$[\{ a^{\text{App}}_{\alpha}\} ] \in H^0(C, {L_0} \otimes \Omega^1_C(D))$.
Then we have a map \eqref{2023_7_10_13_02}:
$$
(E,\nabla, \lambda, \{ l^{(i)}\} )
\longmapsto
\overline{[\{ a^{\text{App}}_{\alpha}\} ]}.
$$
Finally, we have a map
\begin{equation}\label{2023_9_8_10_17}
(\pi_{\mathrm{App}}, \pi_{\mathrm{Bun}}) \colon 
\widetilde{M}_X({L_0},\nabla_{L_0})_0
\longrightarrow
 \mathbb{P} H^0(C, {L_0}\otimes \Omega^1_C(D)) \times 
\mathbb{P} H^1( C, L_0^{-1}(-D)).
\end{equation}

We consider the natural pairing 
\begin{equation}\label{2023_7_10_20_48}
H^0(C, {L_0}\otimes \Omega^1_C(D)) \times H^1(C, L_0^{-1} (-D)) \longrightarrow 
H^1(C,\Omega^1_{C}) \cong \mathbb{C} .
\end{equation}

\begin{lem}\label{2023_7_10_21_01}
Let $(E,\nabla, \lambda, \{ l^{(i)}\} )
\in \widetilde{M}_X({L_0},\nabla_{L_0})_0$.
Let $a^{\text{App}}_{\alpha}$ and $b^{\text{Bun}}_{\alpha\beta}$
be the cocycles in \eqref{2023_7_10_13_12(1)} and
in \eqref{2023_7_10_13_12(2)},
respectively. 
Then we have
$$
[\{  b^{\mathrm{Bun}}_{\alpha\beta} \cdot a^{\mathrm{App}}_{\beta}  \} ] 
= \lambda \cdot \sum_{i \in I}\theta_{i,-1}^-.
$$
Here the left hand side is the pairing \eqref{2023_7_10_20_48}.
\end{lem}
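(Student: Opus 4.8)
The plan is to reduce the pairing to a sum of residues of the diagonal of the connection matrix, and then to read those residues off from the prescribed polar behaviour \eqref{2023_7_10_20_59}. First I would record how the entries of $A_\alpha = \begin{pmatrix} a^{11}_\alpha & a^{12}_\alpha \\ a^{21}_\alpha & a^{22}_\alpha\end{pmatrix}$ transform under the upper triangular transition matrices $B_{\alpha\beta} = \begin{pmatrix} 1 & b^{12}_{\alpha\beta} \\ 0 & b^{22}_{\alpha\beta}\end{pmatrix}$. From $A_\beta = B_{\alpha\beta}^{-1} A_\alpha B_{\alpha\beta} + \lambda B_{\alpha\beta}^{-1}\operatorname{d}\!B_{\alpha\beta}$ a direct computation gives $a^{21}_\beta = a^{21}_\alpha/b^{22}_{\alpha\beta}$ (the $\lambda$-term does not reach the lower-left corner) and
$$ a^{11}_\beta = a^{11}_\alpha - \frac{b^{12}_{\alpha\beta}}{b^{22}_{\alpha\beta}}\, a^{21}_\alpha. $$
The first identity reads $a^{21}_\alpha = b^{22}_{\alpha\beta}a^{21}_\beta$, i.e. $\{a^{21}_\alpha\}$ is a cocycle for $L_0\otimes\Omega^1_C(D)$, so $a^{\mathrm{App}}_\alpha = a^{21}_\alpha(b^{22}_\alpha)^{-1}$ is a well-defined global object (consistent with the definition of $\pi_{\mathrm{App}}$). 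Feeding it into the second identity produces the key relation
$$ a^{11}_\alpha - a^{11}_\beta = b^{12}_{\alpha\beta}\, a^{21}_\beta. $$

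Next I would interpret this relation cohomologically. The right-hand side $\{b^{12}_{\alpha\beta}a^{21}_\beta\}$ is the correctly-framed \v{C}ech representative of the cup product $a^{\mathrm{App}}\cup b^{\mathrm{Bun}}$ in $H^1(C,\Omega^1_C)$ for the pairing \eqref{2023_7_10_20_48}: starting from $a^{\mathrm{App}}_\bullet = a^{21}_\bullet(b^{22}_\bullet)^{-1}$ and $b^{\mathrm{Bun}}_{\alpha\beta} = b^{12}_{\alpha\beta}b^{22}_\alpha$, one uses the canonical isomorphism $L_0\otimes L_0^{-1}\cong\mathcal O_C$ to cancel the meromorphic trivialization $\{b^{22}_\alpha\}$ (chosen so that $b^{22}_{\alpha\beta} = b^{22}_\alpha/b^{22}_\beta$). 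The displayed relation then exhibits this cocycle as the coboundary of the $0$-cochain $\{a^{11}_\alpha\}$ of \emph{meromorphic} $1$-forms. Since $a^{11}_\alpha - a^{11}_\beta$ is holomorphic on each overlap $U_{\alpha\beta}$, the principal parts of the $a^{11}_\alpha$ agree on overlaps and hence define a well-defined polar part at each point of $C$, supported by \eqref{2023_7_10_20_59} only on $\mathrm{Supp}(D)$. Under the residue isomorphism $H^1(C,\Omega^1_C)\cong\mathbb C$ the class of the coboundary of such a $0$-cochain equals $\sum_{i} \mathrm{res}_{t_i}(a^{11})$.

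Finally I would compute these residues. By \eqref{2023_7_10_20_59} the polar part of $A_\alpha$ at $t_i$ is lower triangular with diagonal $\lambda\nu_i^-,\lambda\nu_i^+$, so $a^{11}_\alpha$ has polar part $\lambda\nu_i^-$, where $\nu_i^-$ is the upper-left entry of $\omega_i(X)$ in \eqref{2023_4_10_19_32}. Its residue, i.e. the coefficient of $\operatorname{d}\!z_i/z_i$, is $\theta^-_{i,-1}$, so $\mathrm{res}_{t_i}(a^{11}) = \lambda\,\theta^-_{i,-1}$. Summing over $i\in I$ yields $\lambda\sum_{i\in I}\theta^-_{i,-1}$, as claimed; note that \eqref{2023_8_24_8_47} ensures this quantity is nonzero.

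The main obstacle I anticipate is the reconciliation in the second paragraph: one must check that the literal product $b^{\mathrm{Bun}}_{\alpha\beta}a^{\mathrm{App}}_\beta$ truly computes the intrinsic pairing \eqref{2023_7_10_20_48}, i.e. that the answer is independent of the auxiliary meromorphic coboundary $\{b^{22}_\alpha\}$ and that the residual factor $b^{22}_{\alpha\beta}$ arising from the mismatched meromorphic frames of $L_0$ and $L_0^{-1}$ genuinely disappears in cohomology, leaving the coboundary cocycle $\{b^{12}_{\alpha\beta}a^{21}_\beta\}$. Equivalently, one must fix the normalization and sign of $H^1(C,\Omega^1_C)\cong\mathbb C$ so that the coboundary of $\{a^{11}_\alpha\}$ maps to $+\sum_i\mathrm{res}_{t_i}(a^{11})$. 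Once these conventions are pinned down, the remaining steps are the routine transition computation and residue extraction carried out above.
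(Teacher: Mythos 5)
Your proposal is correct and follows essentially the same route as the paper: write the compatibility relation $\lambda\,\operatorname{d}\!B_{\alpha\beta}+A_{\alpha}B_{\alpha\beta}=B_{\alpha\beta}A_{\beta}$, compare $(1,1)$-entries to exhibit the pairing cocycle as the coboundary of the meromorphic $0$-cochain $\{a^{11}_{\alpha}\}$, and evaluate the class in $H^1(C,\Omega^1_C)\cong\mathbb{C}$ via the residues $\lambda\theta^{-}_{i,-1}$ read off from \eqref{2023_7_10_20_59}. Your extra care about cancelling the auxiliary trivialization $\{b^{22}_{\alpha}\}$ in the product $b^{\mathrm{Bun}}_{\alpha\beta}\cdot a^{\mathrm{App}}_{\beta}$ is warranted (the paper glosses over this index bookkeeping), but it is a matter of convention rather than a mathematical gap.
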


\begin{proof}
Let $B_{\alpha\beta}$ be the transition function in \eqref{2023_7_10_20_51(1)}.
Let $A_{\alpha}$ be the connection matrix in \eqref{2023_7_10_20_51(2)}.
Then we have
$$
\lambda \cdot \operatorname{d}\! B_{\alpha\beta} + A_{\alpha} B_{\alpha\beta} = 
B_{\alpha\beta}A_{\beta}.
$$
By comparing the $(1,1)$-entries of the both hand sides,
we have 
$$
a_{\alpha}^{11} - a_{\beta}^{11} = 
b^{\mathrm{Bun}}_{\alpha\beta} \cdot a^{\mathrm{App}}_{\beta}.
$$
By \eqref{2023_7_10_20_59} and the isomorphism $H^1(C,\Omega^1_{C}) \cong \mathbb{C}$,
we have $[\{  b^{\mathrm{Bun}}_{\alpha\beta} \cdot a^{\mathrm{App}}_{\beta}\}  \} ] 
= \lambda \cdot \sum_{i }\theta_{i,-1}^-$.
\end{proof}

Set
$$
N_0 :=  \dim_{\mathbb{C}} \mathbb{P} H^0(C, {L_0} \otimes \Omega^1_C(D)) =  3g+n-3.
$$
Let us introduce the
homogeneous coordinates
$\boldsymbol{a} = (a_0 :\cdots : a_{N_0})$
on 
$\mathbb{P} H^0(C, {L_0} \otimes \Omega^1_C(D))\cong \mathbb{P}_{\boldsymbol{a}}^{N_0}$
and the dual coordinates 
$\boldsymbol{b} = (b_0 :\cdots : b_{N_0})$
on 
$$
\mathbb{P} H^1(C, L_0^{-1}(-D))\cong 
\mathbb{P} H^0(C, {L_0}\otimes \Omega^1_C(D))^{\vee} \cong \mathbb{P}_{\boldsymbol{b}}^{N_0}.
$$
Let $\Sigma\subset \mathbb{P}_{\boldsymbol{a}}^{N_0} \times \mathbb{P}_{\boldsymbol{b}}^{N_0}$
be the incidence variety 
whose defining equation is given by $\sum_ja_jb_j =0$.
By Lemma \ref{2023_7_10_21_01},
we have that 
$$
\widetilde{M}_X({L_0},\nabla_{L_0})_0
\setminus 
M_X({L_0},\nabla_{L_0})_0
\xrightarrow{ \ (\pi_{\mathrm{App}}, \pi_{\mathrm{Bun}}) \  }
\Sigma.
$$

\begin{remark}\label{2023_7_14_9_20}
Loray--Saito (for $g=0$) and Matsumoto (for $g\geq 1$) discussed on the birationality of 
the map \eqref{2023_9_8_10_17}.
They showed the birationality of 
the map \eqref{2023_9_8_10_17} when $D$ is a reduced effective divisor
(\cite[Theorem 4.3]{LS} for $g=0$ 
and \cite[Theorem 4.5]{Matsu} for $g\ge 1$).
In these cases, quasi-parabolic connections have only simple poles.
But we may apply the arguments in \cite[Theorem 4.3]{LS}
and in \cite[Theorem 4.5]{Matsu} to our cases where 
quasi-parabolic connections admit generic unramified irregular singular points.
So we can reconstruct $(E,\nabla, \lambda, \{ l^{(i)}\} )
\in \widetilde{M}_X({L_0},\nabla_{L_0})_0$ 
from an element of 
$$
\mathbb{P} H^1( C, L_0^{-1}(-D))_0 \times 
\mathbb{P} H^0( C, L_0\otimes \Omega^1_C(D)).
$$
Here we set 
$$
\mathbb{P} H^1( C, L_0^{-1}(-D))_0 := \left\{ b\in  \mathbb{P} H^1( C, L_0^{-1}(-D))
\ \middle| \  
\begin{array}{l}
\text{{\rm The extension $E$ corresponding to $b$}} \\
\text{{\rm satisfies $\dim_{\mathbb{C}} H^1(C,E)=0$}}
\end{array}
\right\}.
$$
Then we have isomorphisms
$$
\widetilde{M}_X({L_0},\nabla_{L_0})_0 \cong
\mathbb{P} H^1( C, L_0^{-1}(-D))_0 \times 
\mathbb{P} H^0( C, L_0\otimes \Omega^1_C(D))
$$
and 
$$
M_X({L_0},\nabla_{L_0})_0 \cong
\mathbb{P} H^1( C, L_0^{-1}(-D))_0 \times 
\mathbb{P} H^0( C, L_0\otimes \Omega^1_C(D)) \setminus \Sigma.
$$
\end{remark}

\subsection{Symplectic structure and explicit description}\label{SubSect:SyplecticSL2}

Now we recall the natural symplectic structure on 
$M_X({L_0},\nabla_{L_0})_0$.
We define a complex $\cF_0^{\bullet}$ for $(E, \frac{1}{\lambda} \nabla ,  \{l^{(i)}\} )$ by 
\begin{equation*}
\begin{aligned}
&\cF_0^0 := \left\{  s \in \cE nd (E)  
\ \middle| \  \mathrm{tr}(s)=0,\, s |_{m_i t_i} (l^{(i)}) 
\subset l^{(i)}  \text{ for any $i$} \right\} \\
&\cF^1_0 :=  \left\{  s \in  \cE nd (E)\otimes \Omega^1_{C}(D)  
\ \middle| \  \mathrm{tr}(s)=0,\,  s|_{m_i t_i} (l^{(i)}) 
\subset l^{(i)} \otimes \Omega^1_C \text{ for any $i$}  \right\} \\
&\nabla_{\cF^{\bullet}} \colon \cF_0^0 \lra \cF_0^1; 
\quad \nabla_{\cF_0^{\bullet}} (s) = 
(\frac{1}{\lambda}\nabla)\circ s - s \circ (\frac{1}{\lambda}\nabla).
\end{aligned}
\end{equation*}
We define the following morphism 
\begin{equation}\label{2023_3_12_20_42}
\begin{aligned}
\bH^1( \cF_0^{\bullet}) \otimes \bH^1(\cF_0^{\bullet}) 
\lra \bH^2(\mathcal{O}_C \xrightarrow{d}\Omega_{C}^{1}) \cong \mathbb{C}
\end{aligned}
\end{equation}
as in \eqref{2020.11.7.15.48}.
This pairing gives the symplectic form
on $M_X({L_0},\nabla_{L_0})_0$.
We denote by $\omega_0$ the symplectic form.

The maps $\pi_{\mathrm{App}}$ and $\pi_{\mathrm{Bun}}$ give 
coordinates on $M_X({L_0},\nabla_{L_0})_0$ (see Remark \ref{2023_7_14_9_20}). 
Now we describe the symplectic structure \eqref{2023_3_12_20_42}
by using the coordinates on $M_X({L_0},\nabla_{L_0})_0$. 
We define a 1-form $\eta$ on 
$\mathbb{P}_{\boldsymbol{a}}^{N_0} \times \mathbb{P}_{\boldsymbol{b}}^{N_0}$
as follows:
$$
\eta := \left( - \sum_i \theta_{i,-1}^- \right) \cdot
\frac{a_0 \, d b_0 + a_1 \, d  b_1 
+ \cdots + a_{N_0} \, d b_{N_0}
}{a_0b_0 + a_1b_1 + \cdots + a_{N_0}b_{N_0}}.
$$

\begin{thm}\label{2023_8_22_16_22}
Assume that $\sum_{i \in I}\theta_{i,-1}^{-} \neq 0$.
Let $\omega_{\boldsymbol{a}, \boldsymbol{b}}$ be
the $2$-form on $\mathbb{P}^{N_0}_{\boldsymbol{a}} \times \mathbb{P}^{N_0}_{\boldsymbol{b}}$
defined by $\omega_{\boldsymbol{a}, \boldsymbol{b}}= d \eta$.
The pull-back of $\omega_{\boldsymbol{a}, \boldsymbol{b}}$ under 
the map
$$
M_X({L_0},\nabla_{L_0})_0
\xrightarrow{\  (\pi_{\mathrm{App}}, \pi_{\mathrm{Bun}}) \  }
\mathbb{P}_{\boldsymbol{a}}^{N_0} \times \mathbb{P}_{\boldsymbol{b}}^{N_0}
$$
coincides with the symplectic form $\omega_0$
on $M_X({L_0},\nabla_{L_0})_0$.
\end{thm}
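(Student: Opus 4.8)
The plan is to follow the strategy of the proof of Theorem~\ref{2023_8_22_12_09}: compute the hypercohomology pairing~\eqref{2023_3_12_20_42} explicitly by \v{C}ech cohomology in the trivializations $\{\varphi_\alpha^{\mathrm{Ext}}\}$ of Definition~\ref{2023_7_10_12_29}, and match the result against the pullback of $d\eta$. By Remark~\ref{2023_7_14_9_20} the map $(\pi_{\mathrm{App}},\pi_{\mathrm{Bun}})$ is a birational isomorphism onto $\mathbb{P}^{N_0}_{\boldsymbol{a}}\times\mathbb{P}^{N_0}_{\boldsymbol{b}}\setminus\Sigma$, so it suffices to compare the two closed $2$-forms on an analytic open subset $V$ where both descriptions are valid. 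First I would fix such a $V$, choose affine coordinates $(a_j,b_j)$ on the target, and describe the tangent map of the inverse parametrization $V\to \bH^1(\mathcal{F}_0^{\bullet})$ in terms of the variations $v(a_j)$, $v(b_j)$ of the homogeneous coordinates.

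The central computation is to identify the cocycles $(\{u_{\alpha\beta}(v)\},\{v_\alpha(v)\})$ representing a tangent vector $v$. Deforming the extension class moves only the off-diagonal entry $b^{12}_{\alpha\beta}$ of the transition matrix~\eqref{2023_7_10_20_51(1)}, so $u_{\alpha\beta}(v)=B_{\alpha\beta}^{-1}v(B_{\alpha\beta})$ is \emph{strictly upper-triangular} with single nonzero entry $v(b^{12}_{\alpha\beta})$; deforming the connection moves the entry $a^{21}_\alpha$ of the connection matrix~\eqref{2023_7_10_20_51(2)}, so $v_\alpha(v)$ is the trace-free element of $\mathcal{F}_0^1$ whose lower-left entry is $v(a^{21}_\alpha)$. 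The key simplification, special to the fixed-determinant setting, is that \emph{all} transition matrices $B_{\alpha\beta}$ are upper-triangular, hence conjugation preserves strict upper-triangularity; therefore the quadratic term $\mathrm{tr}(u_{\alpha\beta}(v)\,u_{\beta\gamma}(v'))$ vanishes identically (a product of two strictly upper-triangular $2\times2$ matrices is nilpotent and traceless). As in Theorem~\ref{2023_8_22_12_09}, the class in $\bH^2(\mathcal{O}_C\xrightarrow{d}\Omega^1_C)\cong\mathbb{C}$ is then represented by the $\Omega^1_C$-valued $1$-cochain $-\{\mathrm{tr}(u_{\alpha\beta}(v)\,v'_\beta(v'))-\mathrm{tr}(v_\alpha(v)\,u'_{\alpha\beta}(v'))\}$, and a direct matrix computation gives this cochain as $v(b^{12}_{\alpha\beta})\,v'(a^{21}_\beta)-v(a^{21}_\alpha)\,v'(b^{12}_{\alpha\beta})$.

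Next I would pass to the invariant cocycles $a^{\mathrm{App}}_\alpha=a^{21}_\alpha(b^{22}_\alpha)^{-1}$ and $b^{\mathrm{Bun}}_{\alpha\beta}=b^{12}_{\alpha\beta}b^{22}_\alpha$ of~\eqref{2023_7_10_13_12(2)},~\eqref{2023_7_10_13_12(1)}, using that the cocycle $\{b^{22}_\alpha\}$ defining $L_0$ does not depend on the moduli. Since $(L_0\otimes\Omega^1_C(D))\otimes L_0^{-1}(-D)=\Omega^1_C$, the products $a^{\mathrm{App}}\cdot b^{\mathrm{Bun}}$ are exactly the \v{C}ech representatives of the cup product~\eqref{2023_7_10_20_48}, so taking the residue sum realizing $H^1(C,\Omega^1_C)\cong\mathbb{C}$ should express $\omega_0(v,v')$ as the antisymmetrized Serre pairing of the coordinate variations, of the shape $\sum_j(v'(a_j)v(b_j)-v(a_j)v'(b_j))$ evaluated in a specific affine lift of the projective coordinates.

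The main obstacle, and the reason the denominator appears in $\eta$, is the projective bookkeeping: the coordinates $\boldsymbol{a},\boldsymbol{b}$ are defined only up to scaling and up to the choice of coboundary $\{b^{22}_\alpha\}$, so one must verify that the residue computation reproduces the scale-invariant form $d\eta$ rather than the naive form $\sum_j da_j\wedge db_j$. Here Lemma~\ref{2023_7_10_21_01} is decisive: it pins the affine representatives to the normalization $\sum_j a_jb_j=\lambda\sum_{i\in I}\theta_{i,-1}^-$, so differentiating this relation controls precisely the radial quantities $\sum_j b_j\,v(a_j)$ and $\sum_j a_j\,v(b_j)$ that enter the second term of $d\eta=c(\sum_j da_j\wedge db_j/S-(\sum_k a_k\,db_k)\wedge(\sum_j b_j\,da_j)/S^2)$ with $S=\sum_j a_jb_j$. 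I expect that matching the constant $-\sum_{i\in I}\theta_{i,-1}^-$ in $\eta$ against the factor produced by Lemma~\ref{2023_7_10_21_01}, together with the passage to the genuine connection $\tfrac1\lambda\nabla$ underlying $\mathcal{F}_0^{\bullet}$, yields exactly $d\eta$, completing the comparison along the lines of Theorem~\ref{2023_8_22_12_09}.
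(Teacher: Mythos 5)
Your proposal follows essentially the same route as the paper's proof: compute the Čech representatives $u_{\alpha\beta}(v)$, $v_\alpha(v)$ in the trivializations $\{\varphi_\alpha^{\mathrm{Ext}}\}$, observe that the strictly upper-triangular form of $u_{\alpha\beta}(v)$ together with the upper-triangularity of $B_{\beta\gamma}$ kills the quadratic term, reduce the remaining $1$-cochain to the Serre pairing of the variations of $a^{\mathrm{App}}$ and $b^{\mathrm{Bun}}$ via the dual bases, and use Lemma~\ref{2023_7_10_21_01} to convert the $1/\lambda$ factor into the denominator $\sum_j a_jb_j$ of $\eta$. All the key steps match the paper's argument, so the proposal is correct and not a genuinely different proof.
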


\begin{proof}
Let $v , v' \in 
T_{(E, \frac{1}{\lambda} \nabla ,  \{l^{(i)}\} )}
M_X({L_0},\nabla_{L_0})_0$.
We have the isomorphism 
$$
T_{(E, \frac{1}{\lambda} \nabla ,  \{l^{(i)}\} )}
M_X({L_0},\nabla_{L_0})_0
\xrightarrow{ \ \cong \ } \bH^1( \cF_0^{\bullet}).
$$
Let
$u_{\alpha\beta} (v)$ and $v_{\alpha} (v)$
be cocycles 
such that the class
$[\{u_{\alpha\beta} (v)\}_{\alpha\beta},\{v_{\alpha} (v)\}_{\alpha}]$
is the image of $v$ under the isomorphism.
We calculate $u_{\alpha\beta} (v)$ and $v_{\alpha} (v)$
by using the trivialization $\{ \varphi_{\alpha}^{\mathrm{Ext}} \}_{\alpha}$
as follows:
\begin{equation}\label{2023_3_14_11_49}
\begin{aligned}
u_{\alpha\beta} (v)
&= 
\varphi_{\beta}^{\mathrm{Ext}}|_{U_{\alpha\beta}} \circ
\left( B_{\alpha\beta}^{-1} v(B_{\alpha\beta}) \right) \circ
(\varphi_{\beta}^{\mathrm{Ext}} |_{U_{\alpha\beta}})^{-1} \\
&=
\varphi_{\beta}^{\mathrm{Ext}}  |_{U_{\alpha\beta}} \circ 
\begin{pmatrix}
0 & v (b^{12}_{\alpha\beta}) \\
0 & 0
\end{pmatrix}
\circ
(\varphi_{\beta}^{\mathrm{Ext}}|_{U_{\alpha\beta}} )^{-1} \\
&=
\varphi_{\beta}^{\mathrm{Ext}} |_{U_{\alpha\beta}} \circ 
\begin{pmatrix}
0 & \frac{v (b^{\text{Bun}}_{\alpha\beta}  )}{b^{22}_{\alpha}} \\
0 & 0
\end{pmatrix}
\circ
(\varphi_{\beta}^{\mathrm{Ext}}|_{U_{\alpha\beta}} )^{-1}
\end{aligned}
\end{equation}
and
\begin{equation}\label{2023_3_14_11_50}
\begin{aligned}
v_{\alpha} (v)
&= 
\varphi_{\alpha}^{\mathrm{Ext}} \circ
 v \left( \frac{1}{\lambda } A_{\alpha} \right) \circ
(\varphi_{\alpha}^{\mathrm{Ext}} )^{-1} \\
&=
\varphi_{\alpha}^{\mathrm{Ext}} \circ  
\begin{pmatrix}
v(a_{\alpha}^{11}/\lambda) & v(a^{12}_{\alpha} /\lambda) \\
v(a^{21}_{\alpha}/\lambda )&  v(a^{22}_{\alpha}/\lambda)
\end{pmatrix}
\circ
(\varphi_{\alpha}^{\mathrm{Ext}} )^{-1}\\
&=
\varphi_{\alpha}^{\mathrm{Ext}} \circ  
\begin{pmatrix}
v(a_{\alpha}^{11}/\lambda) & v(a^{12}_{\alpha} /\lambda) \\
v(a^{\text{App}}_{\alpha}/\lambda ) b^{22}_{\alpha} &  v(a^{22}_{\alpha}/\lambda)
\end{pmatrix}
\circ
(\varphi_{\alpha}^{\mathrm{Ext}} )^{-1}.
\end{aligned}
\end{equation}
Here $\{ b^{22}_{\alpha}   \}_{\alpha} $ is the coboundary in \eqref{2023_7_10_12_37}.
Since we fix the determinant bundle $L_0$, 
we may assume that
the coboundary $\{ b^{22}_{\alpha}   \}_{\alpha} $
is independent of the moduli space 
$M_X({L_0},\nabla_{L_0})_0$.

Now we calculate the class
\begin{equation}\label{2023.3.18.22.55}
[ (\{  \mathrm{tr}( u_{\alpha\beta} (v)  u_{\beta\gamma} (v') ) \}, -\{
\mathrm{tr} \left(u_{\alpha\beta} (v) 
v_{\beta} (v') \right)
- \mathrm{tr} \left(
v_{\alpha} (v)
u_{\alpha\beta} (v') 
 \right) \} )]
\end{equation}
in $\bH^2(\mathcal{O}_C \xrightarrow{d}\Omega_{C}^{1}) \cong \mathbb{C}$.
First we calculate $u_{\alpha\beta} (v)  u_{\beta\gamma} (v')$ as follows:
$$
\begin{aligned}
&u_{\alpha\beta} (v)  u_{\beta\gamma} (v')  \\
&=
\varphi_{\beta}^{\mathrm{Ext}}  |_{U_{\alpha\beta}} \circ 
\begin{pmatrix}
0 & \frac{v (b^{\text{Bun}}_{\alpha\beta}  )}{b^{22}_{\alpha}} \\
0 & 0
\end{pmatrix}
\circ
(\varphi_{\beta}^{\mathrm{Ext}}|_{U_{\alpha\beta}} )^{-1}\circ 
\varphi_{\gamma}^{\mathrm{Ext}}|_{U_{\alpha\beta}} \circ 
\begin{pmatrix}
0 & \frac{v (b^{\text{Bun}}_{\beta\gamma}  )}{b^{22}_{\alpha}} \\
0 & 0
\end{pmatrix}
\circ
(\varphi_{\gamma}^{\mathrm{Ext}} |_{U_{\alpha\beta}})^{-1} \\
&=
\varphi_{\beta}^{\mathrm{Ext}}  |_{U_{\alpha\beta}} \circ 
\begin{pmatrix}
0 & \frac{v (b^{\text{Bun}}_{\alpha\beta}  )}{b^{22}_{\alpha}} \\
0 & 0
\end{pmatrix}
B_{\beta\gamma} 
\begin{pmatrix}
0 & \frac{v (b^{\text{Bun}}_{\beta\gamma}  )}{b^{22}_{\alpha}} \\
0 & 0
\end{pmatrix}
\circ
(\varphi_{\gamma}^{\mathrm{Ext}} |_{U_{\alpha\beta}})^{-1} \\
&=
\varphi_{\beta}^{\mathrm{Ext}} |_{U_{\alpha\beta}} \circ 
\begin{pmatrix}
0 & 0 \\
0 & 0
\end{pmatrix}
\circ
(\varphi_{\gamma}^{\mathrm{Ext}}|_{U_{\alpha\beta}} )^{-1} =0.
\end{aligned}
$$
So we may take a representative of the class \eqref{2023.3.18.22.55} 
so that 
$$
[  -\{
\mathrm{tr} \left(u_{\alpha\beta} (v) 
v_{\beta} (v') \right)
- \mathrm{tr} \left(
v_{\alpha} (v)
u_{\alpha\beta} (v') 
 \right) \} ]
$$
is in $H^1(C,\Omega^1_C)$.
By using equalities 
\eqref{2023_3_14_11_49} and \eqref{2023_3_14_11_50},
we have the following equality
\begin{equation}\label{2023_3_14_11_51}
\mathrm{tr} \left(u_{\alpha\beta} (v) 
v_{\beta} (v') \right)-
\mathrm{tr} \left(
v_{\alpha} (v) u_{\alpha\beta} (v') \right)
= 
v (b^{\text{Bun}}_{\alpha\beta}  ) v'\left( \frac{a^{\text{App}}_{\beta} }{\lambda} \right)
-v\left( \frac{a^{\text{App}}_{\alpha} }{\lambda} \right)  v' (b^{\text{Bun}}_{\alpha\beta}  ) .
\end{equation}
We take bases 
$$
a^{\text{App}(0)}, a^{\text{App}(1)}, \ldots ,a^{\text{App}(N_0)} \in 
H^0(C,  L_0\otimes \Omega^1_C(D))
$$
of $H^0(C,  L_0\otimes \Omega^1_C(D))$ and
$$
[\{ b_{\alpha\beta}^{\text{App}(0)}\}],
[\{ b_{\alpha\beta}^{\text{App}(1)}\}], \ldots ,
[\{ b_{\alpha\beta}^{\text{App}(N_0)}\}]  
$$
of $H^1(C, L_0^{-1}(-D))$
so that these bases give 
the homogeneous coordinates
$(a_0 :\cdots : a_{N_0})$
on 
$ \mathbb{P}_{\boldsymbol{a}}^{N_0}$
and
$(b_0 :\cdots : b_{N_0})$
on $\mathbb{P}_{\boldsymbol{b}}^{N_0}$.
We may assume that 
these bases
are independent of the moduli space 
$M_X({L_0},\nabla_{L_0})_0$.
We set 
$$
a^{\text{App}}_{\alpha} = a_0 a^{\text{App}(0)}|_{U_{\alpha}}
+ a_1 a^{\text{App}(1)}|_{U_{\alpha}}+\cdots 
+a_{N_0} a^{\text{App}(N_0)}|_{U_{\alpha}}
$$
and
$$
b^{\text{App}}_{\alpha\beta} = b_0 b^{\text{App}(0)}_{\alpha\beta}
+ b_1 b^{\text{App}(1)}_{\alpha\beta}+\cdots 
+b_{N_0} b^{\text{App}(N_0)}_{\alpha\beta}.
$$
By \eqref{2023_3_14_11_51}, we have that 
$$
\begin{aligned}
\mathrm{tr} \left(u_{\alpha\beta} (v) 
v_{\beta} (v') \right)-
\mathrm{tr} \left(
v_{\alpha} (v) u_{\alpha\beta} (v') \right) 
&= 
v \left(\sum_{k=0}^{N_0}b_k b^{\text{App}(k)}_{\alpha\beta}  \right)
 v'\left( \frac{\sum_{k=0}^{N_0} a_k a^{\text{App}(k)}|_{U_{\alpha}} }{\lambda} \right) \\
&\qquad -v\left( \frac{\sum_{k=0}^{N_0} a_k a^{\text{App}(k)}|_{U_{\alpha}} }{\lambda} \right)  
v'\left(\sum_{k=0}^{N_0}b_k b^{\text{App}(k)}_{\alpha\beta}  \right) \\
&=
\left( \sum_{k=0}^{N_0} v \left(b_k   \right) b^{\text{App}(k)}_{\alpha\beta}\right)
\left( \sum_{k=0}^{N_0}  v'\left(  \frac{a_k}{\lambda}   \right)a^{\text{App}(k)}|_{U_{\alpha}} \right) \\
&\qquad 
-\left( \sum_{k=0}^{N_0}  v\left(  \frac{a_k}{\lambda}   \right)a^{\text{App}(k)}|_{U_{\alpha}} \right) 
\left( \sum_{k=0}^{N_0} v' \left(b_k   \right) b^{\text{App}(k)}_{\alpha\beta}\right).
\end{aligned}
$$
Since $(b_0 :\cdots : b_{N_0})$
is dual of $(a_0 :\cdots : a_{N_0})$
with respect to the natural pairing 
$$
H^0(C, L_0\otimes \Omega^1_C(D)) \times H^1(C, L_0^{-1} (-D)) \longrightarrow 
H^1(C,\Omega^1_{C}) \cong \mathbb{C},
$$
we have that
$$
\begin{aligned}
&\mathrm{tr} \left(u_{\alpha\beta} (v) 
v_{\beta} (v') \right)-
\mathrm{tr} \left(
v_{\alpha} (v) u_{\alpha\beta} (v') \right) \\
&=
\sum_{k=0}^{N_0}   v \left(b_k   \right) v'\left(  \frac{a_k}{\lambda}   \right) 
-\sum_{k=0}^{N_0}   v' \left(b_k   \right) v\left(  \frac{a_k}{\lambda}   \right) .
\end{aligned}
$$
On the other hand, we have that
$$
\lambda =
\frac{ \langle [\{ a^{\text{App}}_{\alpha} \} ,[ \{ b^{\text{Bun}}_{\alpha\beta}\}]   \rangle}{
-\sum_i \theta_{i,-1}^-}
=
\frac{ a_0b_0 + a_1b_1 + \cdots + a_{N_0}b_{N_0}}{
-\sum_i \theta_{i,-1}^-}.
$$
Then we have
$$
\begin{aligned}
H^1(C,\Omega^1_C) &\xrightarrow{\ \cong \ }
\mathbb{C} \\
[  -\{
\mathrm{tr} \left(u_{\alpha\beta} (v) 
v_{\beta} (v') \right)
- \mathrm{tr} \left(
v_{\alpha} (v)
u_{\alpha\beta} (v') 
 \right) \} ]  
&\longmapsto
 d\eta(v,v') .
 \end{aligned}
$$
This means the statement.
\end{proof}

\section{Companion normal forms for an elliptic curve with two poles}\label{Sect:CompForElliptic}

In Section \ref{sect:CompanionNF}, we introduced the companion normal form 
of a rank 2 meromorphic connection with some assumption. 
The purpose of the present section is to detail the case of an elliptic curve with two simple poles, or with an unramified irregular singularity of order $2$. 
The latter case arises by confluence from the first one, up to some modification in the arguments. 
We will give explicit description of 
the companion normal form for an elliptic curve in these cases.
Moreover, we will calculate the canonical coordinates introduced 
in Section \ref{subsect:Canonical_Coor}. 
First we start from construction of the companion normal form
$(\mathcal{O}_C\oplus (\Omega_C^1(D))^{-1} , \nabla_0)$.
Next we will construct a rank 2 meromorphic connection $(E,\nabla)$
by transforming the companion normal form.

Let $C$ be the elliptic curve constructed by 
gluing affine cubic curves
$$
U_{0} := (y_1^2-x_1(x_1-1)(x_1-\lambda) =0)
\quad\text{and} \quad 
U_{\infty} := ( y_2^2-x_2(1-x_2)(1-\lambda x_2)  =0)
$$
with the relations
$x_1 = x_2^{-1}$ and
$y_1 = y_2 x_2^{-2}$.
We fix some $t\in\mathbb{C}$ and set $D=t_1+t_2$
where $t_1=(t,s)$ and $t_2=(t,-s)$, so that $D$ is the positive part of $\mathrm{div}(x-t)$.
Let $q_1,q_2,q_3$ be points on $C$:
$$
q_j\colon (x_1,y_1) = (u_j,v_j)
$$
for each $j=1,2,3$.
Now we assume that 
$u_j \not\in \{ 0,1,\lambda, \infty, t \} $ for any $j$.

We take trivialization of the line bundle $(\Omega^1_{C}(D))^{-1}$
over $C$ as follows:
\begin{equation}\label{2023_6_30_21_56(1)}
\mathcal{O}_{U_0} \xrightarrow{\ \sim \ } (\Omega^1_{C}(D))^{-1} |_{U_0} ; \quad 
1 \longmapsto  \left( \frac{\operatorname{d}\!x_1}{(x_1-t)y_1} \right)^{-1}
\end{equation}
and
\begin{equation}\label{2023_6_30_21_56(2)}
\mathcal{O}_{U_\infty} \xrightarrow{\ \sim \ } (\Omega^1_{C}(D))^{-1} |_{U_\infty} ;\quad 
1 \longmapsto  \left( \frac{\operatorname{d}\!x_2}{(1-tx_2)y_2} \right)^{-1}.
\end{equation}
Then the corresponding transition function $f_{0\infty}$ is as follows:
\begin{equation}\label{2023_7_1_12_35}
\begin{aligned}
f_{\infty0} \colon \mathcal{O}_{U_0} |_{U_0 \cap U_\infty} &\xrightarrow{\ \sim \ }
 \mathcal{O}_{U_\infty} |_{U_0 \cap U_\infty} \\
1 &\longmapsto  -\frac{1}{x_2}.
\end{aligned} 
\end{equation}

\subsection{Definition of a connection $\nabla_0$ on 
$\mathcal{O}_C\oplus (\Omega^1_{C}(D))^{-1}$}
For $\zeta_1,\zeta_2,\zeta_3 \in \mathbb{C}$, 
we define $1$-forms $\omega_{12}$, $\omega_{21}$, and $\omega_{22}$ as follows:
\begin{equation}\label{2023_7_1_10_5}
\begin{aligned}
&\omega_{12}=\sum_{j=1}^3\frac{\zeta_j}{2} \cdot
\frac{y_1+v_{j}}{x_1-u_{j}}
\cdot
\frac{\operatorname{d}\!x_1}{y_1} 
+\left( \frac{A_1 + A_2 y_1   }{x_1-t} 
+ A_3 +A_4 x_1    \right)
\frac{\operatorname{d}\!x_1}{y_1}\\
&\omega_{21}:= \frac{1}{x_1-t}\frac{\operatorname{d}\!x_1}{y_1} \\
&\omega_{22}:=\sum_{j=1}^3\frac{1}{2} \cdot
\frac{y_1+v_{j}}{x_1-u_{j}} \cdot \frac{\operatorname{d}\!x_1}{y_1}
+ \left(\frac{B_1 + B_2 y_1 }{x_1-t} +B_3 \right) \frac{\operatorname{d}\!x_1}{y_1} .
\end{aligned}
\end{equation}
Here $A_1,\ldots,A_4 \in \mathbb{C}$ and $B_1,\ldots,B_3\in \mathbb{C}$ 
are parameters.
Notice that 
$\omega_{12} \otimes \omega_{21}$
is a global section of $(\Omega_{C}^1)^{\otimes 2}(2D+B)$
and 
$\omega_{22}$ is a global section of $\Omega_{C}^1(D+B+\infty)$.

\subsubsection{Fixing the polar parts in the logarithmic case}
We start by analyzing the case where $t\notin \{ 0,1,\lambda, \infty \}$. 
In this case, we have $s\neq 0$, so $t_1\neq t_2$. 
We fix complex numbers $\theta_1^{\pm}, \theta_2^{\pm}$
such that $\sum_{i=1}^2 (\theta^+_i+\theta^-_i) = -1$,
which is called Fuchs' relation.
Now we assume that the eigenvalues of the matrix 
$$
  \mathrm{res}_{t_1} \begin{pmatrix}
0 & \omega_{12}\\
\omega_{21} & \omega_{22}
\end{pmatrix}
$$
are given by $\theta_1^+, \theta_1^-$ and the eigenvalues of the matrix 
$$
  \mathrm{res}_{t_2} \begin{pmatrix}
0 & \omega_{12}\\
\omega_{21} & \omega_{22}
\end{pmatrix}
$$
are given by $\theta_2^+, \theta_2^-$. 
(To be coherent with Definition~\ref{2023_7_14_23_01}, we should write $\theta_{1,-1}$ and $\theta_{2,-1}$ for elements of the Cartan subalgebra, and $\theta_{1,-1}^{\pm}$ and $\theta_{2,-1}^{\pm}$ for their eigenvalues; however, we drop the subscript $-1$ for ease of notation, because there are only poles of order $1$, so no confusion is possible.) 
Specifically, these conditions read as  
\begin{equation}\label{2023_6_30_22_02(1)}
\mathrm{res}_{(t,s)}\omega_{12} \cdot
\mathrm{res}_{(t,s)}\omega_{21} =  \theta^+_1 \cdot \theta^-_1, \qquad 
\mathrm{res}_{(t,-s)}\omega_{12} \cdot 
\mathrm{res}_{(t,-s)}\omega_{21} =  \theta^+_2 \cdot \theta^-_2,
\end{equation}
and
\begin{equation}\label{2023_6_30_22_02(2)}
\mathrm{res}_{(t,s)}\omega_{22} =\theta^+_1 + \theta^-_1, \qquad   
\mathrm{res}_{(t,-s)}\omega_{22} = \theta^+_2 + \theta^-_2.
\end{equation}
Notice that $\mathrm{res}_{(u_j,v_j)}\omega_{22} = 1$ for each $j$.
By the residue theorem, 
$\mathrm{res}_{\infty}\omega_{22} = -2$.
By the assumption \eqref{2023_6_30_22_02(1)}
and \eqref{2023_6_30_22_02(2)},
we may determine the parameters $A_1,A_2, B_1$, and $B_2$.

\begin{lem}\label{2023_7_21_17_07}
Let complex numbers $\theta_1^{\pm}, \theta_2^{\pm}$ satisfying Fuchs' relation be given. 
Then, there exist unique values of the parameters $A_1,A_2, B_1$, and $B_2$ such that \eqref{2023_6_30_22_02(1)} and \eqref{2023_6_30_22_02(2)} are fulfilled. 
Moreover, these parameter values are independent of $u_1,u_2,u_3$, $\zeta_1,\zeta_2$, and $\zeta_3$.
So the polar parts of $\omega_{12},\omega_{21}$, and $\omega_{22}$
at $t_i$ are
independent of $u_1,u_2,u_3$, $\zeta_1,\zeta_2$, and $\zeta_3$.
\end{lem}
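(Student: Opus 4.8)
The plan is to extract from the residue conditions \eqref{2023_6_30_22_02(1)} and \eqref{2023_6_30_22_02(2)} a system of equations in the unknowns $A_1, A_2, B_1, B_2$ and show it is uniquely solvable. First I would compute the residues appearing in these conditions explicitly. Since $t_1 = (t,s)$ and $t_2 = (t,-s)$ with $s \neq 0$, the local coordinate near $t_1$ (resp. $t_2$) can be taken to be $x_1 - t$, and because $\frac{\operatorname{d}\!x_1}{y_1}$ is the standard holomorphic form on the elliptic curve (nonvanishing at $t_1, t_2$), the residues at $t_1, t_2$ of each of the three $1$-forms pick out precisely the coefficient of $\frac{1}{x_1 - t}$ in the bracketed expressions, evaluated with $y_1 = \pm s$. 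The summation terms $\sum_j \frac{\zeta_j}{2}\frac{y_1+v_j}{x_1-u_j}$ and $\sum_j \frac{1}{2}\frac{y_1+v_j}{x_1-u_j}$ are holomorphic at $t_1, t_2$ (as $u_j \neq t$), hence contribute nothing to these residues; this is the key observation that will yield the claimed independence from $u_j$ and $\zeta_j$.

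Concretely, I expect $\operatorname{res}_{(t,\pm s)}\omega_{21} = \frac{1}{\pm s}$, $\operatorname{res}_{(t,\pm s)}\omega_{22} = \frac{B_1 \pm B_2 s}{\pm s}$, and $\operatorname{res}_{(t,\pm s)}\omega_{12} = \frac{A_1 \pm A_2 s}{\pm s}$, where the $A_3, A_4, B_3$ terms drop out since $A_3 + A_4 x_1$ and $B_3$ are holomorphic at $t_1, t_2$. Substituting into \eqref{2023_6_30_22_02(2)} gives the linear system
\begin{align*}
  \frac{B_1 + B_2 s}{s} &= \theta_1^+ + \theta_1^-, \\
  \frac{B_1 - B_2 s}{-s} &= \theta_2^+ + \theta_2^-,
\end{align*}
which is a $2\times 2$ linear system in $B_1, B_2$ with determinant $-2$ (up to the nonzero factor $s$), hence uniquely solvable. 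Substituting into \eqref{2023_6_30_22_02(1)} gives
\begin{align*}
  \frac{A_1 + A_2 s}{s}\cdot\frac{1}{s} &= \theta_1^+ \theta_1^-, \\
  \frac{A_1 - A_2 s}{-s}\cdot\frac{1}{-s} &= \theta_2^+ \theta_2^-,
\end{align*}
again a uniquely solvable $2\times 2$ linear system in $A_1, A_2$. Solving these and recording the explicit formulas establishes existence and uniqueness of $A_1, A_2, B_1, B_2$.

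The independence claim then follows immediately: since none of the four equations involves $u_1, u_2, u_3$ or $\zeta_1, \zeta_2, \zeta_3$ (the holomorphic summation terms having been discarded when taking residues at $t_1, t_2$), the solution values are manifestly independent of these parameters. Consequently the polar parts of $\omega_{12}, \omega_{21}, \omega_{22}$ at $t_1, t_2$ — which are determined by exactly these residues — are likewise independent of $u_j$ and $\zeta_j$.

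I anticipate no genuine obstacle here; the statement is essentially a bookkeeping exercise in residue computation. The one point requiring mild care is verifying that the summation terms and the $A_3 + A_4 x_1$, $B_3$ contributions are indeed holomorphic at $t_1, t_2$ (so that they do not enter the residue equations), and correctly tracking the sign of $y_1 = \pm s$ and of the local uniformizer at the two points. Once the residues are computed, the two decoupled $2\times 2$ systems are trivially nondegenerate (their determinants are nonzero multiples of $s \neq 0$), so unique solvability and parameter-independence are automatic. Fuchs' relation $\sum_i(\theta_i^+ + \theta_i^-) = -1$ is not needed for solvability itself but is the consistency condition matching $\operatorname{res}_\infty \omega_{22} = -2$ via the residue theorem, so I would remark on this to confirm coherence of the setup.
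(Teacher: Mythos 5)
Your proposal is correct and follows essentially the same route as the paper's proof: both reduce the residue conditions to the two decoupled $2\times 2$ linear systems $\frac{A_1 \pm A_2 s}{\pm s}\cdot\frac{1}{\pm s} = \theta_i^+\theta_i^-$ and $\frac{B_1 \pm B_2 s}{\pm s} = \theta_i^+ + \theta_i^-$, observing that the summation terms are holomorphic at $t_1,t_2$ and hence do not enter, which gives both unique solvability and the independence from $u_j,\zeta_j$. Your additional remarks on the nonvanishing determinants and the role of Fuchs' relation as a consistency check with $\operatorname{res}_\infty\omega_{22}=-2$ are accurate but not needed beyond what the paper records.
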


\begin{proof}
By the equalities \eqref{2023_6_30_22_02(1)}, we have 
$$
\frac{A_1 + A_2s}{s} \cdot \frac{1}{s} = \theta^+_1 \cdot \theta^-_1
\quad \text{and} \quad 
\frac{A_1 - A_2s}{-s} \cdot \frac{1}{-s} = \theta^+_2 \cdot \theta^-_2.
$$
By the equalities in \eqref{2023_6_30_22_02(2)}, we have
$$
\frac{B_1 + B_2s}{s}  = \theta^+_1 + \theta^-_1
\quad \text{and} \quad 
\frac{B_1 - B_2s}{-s}  = \theta^+_2 + \theta^-_2.
$$
By these equalities, 
$A_1,A_2, B_1$, and $B_2$ are determined,
and $A_1,A_2, B_1$, and $B_2$
are
independent of $u_1,u_2,u_3$, $\zeta_1,\zeta_2$, and $\zeta_3$.
It is clear that 
the polar parts of $\omega_{12},\omega_{21}$, and $\omega_{22}$
at $t_i$ are
independent of $u_1,u_2,u_3$, $\zeta_1,\zeta_2$, and $\zeta_3$.
\end{proof}

\subsubsection{Fixing the polar part in the irregular case}
We now study the situation $t\in \{ 0, 1 , \lambda , \infty \}$. 
For sake of concreteness, we let $t=0$, the other cases being similar. 
Then, $s=0$ and $t_1 = t_2$, so the divisor $D$ is reduced of length $2$. 
A local holomorphic coordinate of the elliptic curve $C$ in a neighbourhood of $t_1$ is given by $y_1$. 

We fix $\theta_{-2}^{\pm}, \theta_{-1}^+\in\mathbb{C}$ so that $\theta_{-2}^+ \neq \theta_{-2}^-$ and set $\theta_{-1}^- = - 1 - \theta_{-1}^+$. 
(To be coherent with Definition~\ref{2023_7_14_23_01}, we should write $\theta_{1,-2}$ and $\theta_{1,-1}$ for elements of the Cartan subalgebra, and $\theta_{1,-2}^{\pm}$ and $\theta_{1,-1}^{\pm}$ for their eigenvalues; however, we omit the subscript $1$ for ease of notation, because there is only one singular point, so no confusion is possible.) 
\begin{lem}
 Fix $\theta_{-2}^{\pm}, \theta_{-1}^{\pm}$ as above. 
 Then, there exist unique values $A_1, A_2, B_1, B_2\in\mathbb{C}$ such that the eigenvalues of 
$$
  \mathrm{res} \begin{pmatrix}
0 & \omega_{12}\\
\omega_{21} & \omega_{22}
\end{pmatrix}
$$
admit Laurent expansions of the form 
$$
  \left( \theta_{-2}^{\pm} \frac{1}{y_1^2} + \theta_{-1}^{\pm} \frac{1}{y_1} + O(1) \right)  \otimes \operatorname{d}\! y_1. 
$$
Moreover, the values of the solutions are independent of $u_i, \zeta_i$. 
\end{lem}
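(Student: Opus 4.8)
The plan is to follow the same strategy as in the logarithmic case of Lemma~\ref{2023_7_21_17_07}, the only difference being that we now match the full polar part of the eigenvalue $1$-forms rather than just residues. First I would fix the local holomorphic coordinate $y_1$ at $t_1=(0,0)$ and record the Laurent expansions at $t_1$ of the three building blocks $x_1$, $\frac{\operatorname{d}\!x_1}{y_1}$ and $\frac{1}{x_1}$. Since $(0,0)$ is a Weierstrass point, $x_1$ is invariant under the hyperelliptic involution $y_1\mapsto -y_1$, hence even in $y_1$ with $x_1=\frac{y_1^2}{\lambda}+O(y_1^4)$; consequently $\frac{\operatorname{d}\!x_1}{y_1}=\bigl(\frac{2}{\lambda}+O(y_1^2)\bigr)\operatorname{d}\!y_1$ is holomorphic and nonvanishing at $t_1$, while $\frac{1}{x_1}=\frac{\lambda}{y_1^2}\bigl(1+O(y_1^2)\bigr)$.

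From these expansions I would read off the polar parts of $\omega_{12}$, $\omega_{21}$ and $\omega_{22}$ at $t_1$. The crucial observation is that every summand carrying a factor $\zeta_j$, $v_j$, $A_3$, $A_4$ or $B_3$, together with all dependence on the $u_j$, is holomorphic at $t_1$ (because $u_j\neq 0$), so it contributes only to the $O(1)$ term and never to the coefficients of $\frac{\operatorname{d}\!y_1}{y_1^2}$ or $\frac{\operatorname{d}\!y_1}{y_1}$. Thus the order-$2$ poles are governed solely by $A_1$ and $B_1$, and the order-$1$ poles only by $A_1,A_2,B_1,B_2$. This already settles the last assertion of the lemma: whatever values of $A_1,A_2,B_1,B_2$ solve the eigenvalue constraints, they cannot involve $u_i$ or $\zeta_i$.

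It then remains to match the eigenvalues. Writing the characteristic polynomial of the connection matrix as $\mu^2-\omega_{22}\,\mu-\omega_{12}\omega_{21}$, the two eigenvalue $1$-forms are $\mu_\pm=\frac12\bigl(\omega_{22}\pm\sqrt{\omega_{22}^2+4\,\omega_{12}\omega_{21}}\bigr)$. I would expand the radicand: it has a pole of order $4$ whose leading coefficient equals $(\theta_{-2}^+-\theta_{-2}^-)^2$ once the order-$2$ equations are imposed. Because the hypothesis $\theta_{-2}^+\neq\theta_{-2}^-$ makes this leading coefficient nonzero, the square root admits a genuine Laurent expansion with a pole of order exactly $2$, and the $\mu_\pm$ acquire well-defined expansions of the prescribed shape. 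Matching the coefficient of $\frac{\operatorname{d}\!y_1}{y_1^2}$ yields $B_1=\frac12(\theta_{-2}^++\theta_{-2}^-)$ and $A_1=-\frac14\theta_{-2}^+\theta_{-2}^-$ from the sum and product of the leading eigenvalues; matching the coefficient of $\frac{\operatorname{d}\!y_1}{y_1}$ then gives a linear system for $(A_2,B_2)$ whose determinant is a nonzero multiple of $(\theta_{-2}^+-\theta_{-2}^-)^{-1}$, hence has a unique solution.

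The only genuine obstacle is the non-degeneracy of this square root, that is, the nonvanishing of the leading discriminant $(\theta_{-2}^+-\theta_{-2}^-)^2$; this is precisely the unramifiedness assumption $\theta_{-2}^+\neq\theta_{-2}^-$, the analog of the nonresonance condition used throughout, and it is also what guarantees that the two eigenvalue $1$-forms are single-valued (rather than exchanged along a small loop). One should additionally fix once and for all the sign of the square root in order to decide which eigenvalue is labelled $+$ and which $-$; this amounts only to a consistent ordering of the two eigenvalues. Apart from these points the argument is a routine unwinding of Laurent series, entirely parallel to Lemma~\ref{2023_7_21_17_07}.
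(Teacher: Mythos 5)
Your argument is correct and its first half coincides with the paper's proof: the same local analysis at the Weierstrass point ($x_1$ even in $y_1$ with $x_1=y_1^2/\lambda+O(y_1^4)$, so $\frac{\operatorname{d}\!x_1}{y_1}$ is holomorphic and $\frac{\operatorname{d}\!x_1}{x_1y_1}=\frac{2\operatorname{d}\!y_1}{y_1^2}(1+O(y_1^2))$), the same observation that all terms carrying $u_j,\zeta_j,v_j,A_3,A_4,B_3$ are holomorphic at $t_1$ (which gives the independence claim), and the same values $B_1=\tfrac12(\theta_{-2}^++\theta_{-2}^-)$, $A_1=-\tfrac14\theta_{-2}^+\theta_{-2}^-$ at leading order. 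The only divergence is in the order-$y_1^{-1}$ matching: the paper stays with the symmetric functions throughout, reading $B_2$ off the trace and $A_2$ off the determinant, so the four equations decouple completely into four scalar equations and no square root ever appears; you instead expand $\sqrt{\omega_{22}^2+4\omega_{12}\omega_{21}}$ and solve a coupled $2\times 2$ system for $(A_2,B_2)$ whose determinant is indeed $8(\theta_{-2}^+-\theta_{-2}^-)^{-1}$, as one checks directly. Both routes are valid and give the same answer; the paper's is slightly cleaner because unique solvability is immediate, while yours makes visible exactly where the unramifiedness hypothesis $\theta_{-2}^+\neq\theta_{-2}^-$ enters --- a point the paper leaves implicit, since passing from prescribed trace and determinant back to prescribed individual eigenvalue expansions also requires the discriminant's leading coefficient $(\theta_{-2}^+-\theta_{-2}^-)^2$ to be nonzero.
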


\begin{proof}
By the inverse function theorem, there exists an analytic open subset $U\subset \mathbb{C}$ and a holomorphic function $h\colon U \to \mathbb{C}$ satisfying $h(0)=0$ such that $C$ is given by the explicit equation $x_1 = h (y_1^2 )$. 
It is obvious that this function $h$ is independent of the choice of $u_i, \zeta_i$, and it is easy to see that $h'(0) = \frac{1}{\lambda}\neq 0$. 
From the defining equation of $C$ we get 
$$
  \frac{\operatorname{d}\!x_1}{y_1} = \frac{2 \operatorname{d}\!y_1}{3 x_1^2 -2(1+\lambda ) x_1 + \lambda}, 
$$
so $\frac{\operatorname{d}\!x_1}{y_1}$ is a holomorphic $1$-form around $t_1$. 
Moreover, 
$$
  \frac{\operatorname{d}\!x_1}{x_1 y_1} =  \frac{\operatorname{d}\!y_1}{y_1^2} g(y_1^2)
$$
for some holomorphic function $g\colon U \to \mathbb{C}$ satisfying $g(0)=2$. 
The polar parts of the coefficients can be separated as 
\begin{align*}
  \omega_{12} & = (A_1 + A_2 y_1 ) \frac{\operatorname{d}\!x_1}{x_1y_1} + O(1) = 2 (A_1 + A_2 y_1 ) \frac{\operatorname{d}\!y_1}{y_1^2} + O(1) \\
  \omega_{21} & = 2 \frac{\operatorname{d}\!y_1}{y_1^2} + O(1) \\
  \omega_{22} & = (B_1 + B_2 y_1 ) \frac{\operatorname{d}\!x_1}{x_1y_1} + O(1) = 2 (B_1 + B_2 y_1 ) \frac{\operatorname{d}\!y_1}{y_1^2} + O(1).
\end{align*}
Now, the sum of the eigenvalues must be  
$$
  ( \theta_{-2}^+ + \theta_{-2}^- ) \frac{1}{y_1^2} + ( \theta_{-1}^+ + \theta_{-1}^-) \frac{1}{y_1}. 
$$
These conditions determine 
$$
  B_1 = \frac 12 ( \theta_{-2}^+ + \theta_{-2}^- ), \qquad B_2 = \frac 12 (\theta_{-1}^+ + \theta_{-1}^-) = - \frac12 . 
$$
Moreover, we have 
$$
  - \omega_{12} \omega_{21} = -4 (A_1 + A_2 y_1 ) \frac{\left( \operatorname{d}\!y_1 \right)^{\otimes 2}}{y_1^4} + O\left( \frac{1}{y_1^2} \right). 
$$
On the other hand, the product of the eigenvalues must have the expansion (up to a global factor $\left( \operatorname{d}\!y_1 \right)^{\otimes 2}$)
$$
  \theta_{-2}^+ \theta_{-2}^- \frac{1}{y_1^4} +  ( \theta_{-2}^+ \theta_{-1}^- + \theta_{-2}^- \theta_{-1}^+) \frac{1}{y_1^3}. 
$$
These condition then determine 
$$
  A_1 = - \frac 14 \theta_{-2}^+ \theta_{-2}^-, \qquad A_3 = - \frac 14 (\theta_{-2}^+ \theta_{-1}^- + \theta_{-2}^- \theta_{-1}^+).
$$
This finishes the proof. 
\end{proof}

\subsubsection{Construction of the connection}
We define
$$
\begin{aligned}
&\beta \colon (\Omega^1_{C}(D))^{-1} \longrightarrow 
\Omega^1_{C}(D+B) 
&& \text{($\mathcal{O}_{C}$-morphism)} \\
&\delta \colon (\Omega^1_{C}(D))^{-1} \longrightarrow 
(\Omega^1_{C}(D))^{-1} 
\otimes \Omega^1_{C}(D+B) 
&& \text{(connection)} \\
&\gamma \colon \mathcal{O}_C
\longrightarrow 
(\Omega^1_{C}(D))^{-1} 
\otimes \Omega^1_{C}(D)
&&\text{($\mathcal{O}_{C}$-morphism)}
\end{aligned}
$$
by using 
the trivializations \eqref{2023_6_30_21_56(1)}
and \eqref{2023_6_30_21_56(2)} of $(\Omega^1_{C}(D))^{-1}$
as follows:
$$
\beta=
\begin{cases}
\omega_{12} 
\colon \mathcal{O}_{U_0} \rightarrow \mathcal{O}_{U_0} \otimes \Omega^1_{C}(D+B)|_{U_0} \\
\mathrm{id} \circ \omega_{12}  \circ f^{-1}_{\infty0}
\colon \mathcal{O}_{U_\infty} \rightarrow
 \mathcal{O}_{U_\infty} \otimes \Omega^1_{C}(D+B)|_{U_\infty},
\end{cases}
$$
$$
\delta=
\begin{cases}
\operatorname{d}+\omega_{22} 
\colon \mathcal{O}_{U_0} \rightarrow 
 \mathcal{O}_{U_0} \otimes \Omega^1_{C}(D+B)|_{U_0} \\
\operatorname{d} + f_{\infty0} \circ \omega_{22} \circ f^{-1}_{\infty0}
+ f_{\infty0} \circ \operatorname{d}\! f^{-1}_{\infty0} 
\colon \mathcal{O}_{U_\infty} \rightarrow 
 \mathcal{O}_{U_\infty} \otimes\Omega^1_{C}(D+B)|_{U_\infty},
\end{cases}
$$
$$
\gamma :=
\begin{cases}
\omega_{21}
\colon \mathcal{O}_{U_0} \rightarrow 
 \mathcal{O}_{U_0} \otimes \Omega^1_{C}(D+B)|_{U_0} \\ 
f_{\infty 0}\circ \omega_{21} \circ \mathrm{id}
\colon \mathcal{O}_{U_\infty} \rightarrow
 \mathcal{O}_{U_\infty} \otimes \Omega^1_{C}(D+B)|_{U_\infty}.
\end{cases}
$$
Here $f_{\infty0}$ is the transition function of $(\Omega^1_{C}(D))^{-1}$
described in \eqref{2023_7_1_12_35}.
Notice that 
$$f_{\infty0} \circ \omega_{22} \circ f^{-1}_{\infty0}
+ f_{\infty0} \circ \operatorname{d}\! f^{-1}_{\infty0}  = 
\omega_{22} + \frac{\operatorname{d}\!x_2}{x_2},$$
which is holomorphic at $\infty \in C$,
since we have $\mathrm{res}_{\infty}\omega_{22} = -2$.
We define a connection as follows:
\begin{equation}\label{2023_7_14_17_29}
\nabla_0 :=
\operatorname{d} +\begin{pmatrix}0&\beta\\ \gamma& \delta \end{pmatrix}
 \colon \mathcal{O}_C \oplus (\Omega^1_{C}(D))^{-1}
\longrightarrow \left(  \mathcal{O}_C \oplus (\Omega^1_{C}(D))^{-1} \right) 
\otimes \Omega^1_{C}(D+B),
\end{equation}
which is the companion normal form.
Remark that 
$$
\mathrm{res}_{q_j} (\nabla_0)
= \begin{pmatrix}
0 & \zeta_j \\
0 & 1
\end{pmatrix}
$$
for $j=1,2,3$.

\begin{lem}\label{2023_7_21_17_07(2)}
The fact that $\nabla_0$ has apparent singular points at $q_1,q_2,q_3$ 
imposes 3 linear conditions
on $A_3,A_4,B_3$ in terms of spectral data, and $((u_j,v_j),\zeta_j)$'s; 
we can uniquely determine $A_3,A_4,B_3$
from these conditions if, and only if, we have
\begin{equation}\label{2023_7_3_22_49}
\det\begin{pmatrix}1&u_1&\zeta_1\\ 1&u_2&\zeta_2 \\ 1&u_3&\zeta_3\end{pmatrix}\not=0.
\end{equation}
\end{lem}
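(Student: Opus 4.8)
The plan is to make the apparent-singularity condition at each $q_j$ explicit and read off the resulting linear system in $A_3, A_4, B_3$. Recall from Lemma~\ref{lem:independence}, specifically equation~\eqref{eq:apparent}, that the singularity at $q_j$ is apparent precisely when the holomorphic quantity $\beta - \zeta_j \delta \otimes \operatorname{d}\! z_j$ vanishes at $q_j$. With $\beta = \omega_{12} \otimes \omega_{21}$ and $\delta = \operatorname{d} + \omega_{22}$ in the present coordinates, I would first observe that the designed singular parts $\frac{\zeta_j}{2} \cdot \frac{y_1 + v_j}{x_1 - u_j}\cdot\frac{\operatorname{d}\! x_1}{y_1}$ in $\omega_{12}$ and $\frac12 \cdot \frac{y_1+v_j}{x_1-u_j}\cdot\frac{\operatorname{d}\! x_1}{y_1}$ in $\omega_{22}$ are exactly what is needed so that each $q_j$ is a simple pole of $\nabla_0$ with residue $\left(\begin{smallmatrix}0 & \zeta_j\\ 0 & 1\end{smallmatrix}\right)$, and then the genuine apparent condition is the vanishing of the off-diagonal entry of the conjugated residue, as in~\eqref{eq:apparent}. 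This reduces the problem to evaluating an explicit $1$-form at the three points $q_1, q_2, q_3$.

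Next I would extract the dependence on the unknowns $A_3, A_4, B_3$. These three parameters enter only through the \emph{holomorphic} pieces $A_3 + A_4 x_1$ of $\omega_{12}$ and $B_3$ of $\omega_{22}$; by the previous two lemmas, $A_1, A_2, B_1, B_2$ are already fixed by the spectral data and are independent of the $u_j, \zeta_j$. Writing out $\left(\beta - \zeta_j \delta\otimes\operatorname{d}\! z_j\right)|_{q_j}$ as a function of $A_3, A_4, B_3$, the contribution of $A_3 + A_4 x_1$ evaluated at $q_j$ is $A_3 + A_4 u_j$ (times the common factor $\omega_{21}|_{q_j}$), while $B_3$ enters through $-\zeta_j B_3$ (again times $\omega_{21}|_{q_j}$). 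After dividing out the nonvanishing factor $\omega_{21}|_{q_j} = \frac{1}{u_j - t}\frac{\operatorname{d}\! x_1}{y_1}|_{q_j}$, each of the three apparent conditions becomes an affine-linear equation whose coefficient vector in $(A_3, A_4, -B_3)$ is $(1, u_j, \zeta_j)$. The right-hand sides are the quadratic-in-$\zeta_j$ expressions of the form $\zeta_j^2 \operatorname{d}\! z_j^{\otimes 2}|_{q_j}$ coming from~\eqref{eq:apparent}, together with the already-determined contributions of $A_1, A_2, B_1, B_2$; these depend on the spectral data and on $((u_j, v_j), \zeta_j)$ but not on the three unknowns.

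Collecting the three equations, the system has coefficient matrix
$$
\begin{pmatrix}1 & u_1 & \zeta_1\\ 1 & u_2 & \zeta_2\\ 1 & u_3 & \zeta_3\end{pmatrix},
$$
so $A_3, A_4, B_3$ are uniquely determined if and only if this determinant is nonzero, which is exactly condition~\eqref{2023_7_3_22_49}. This is the specialization of the general determinant~\eqref{eq:determinant} to the elliptic case, where $g = 1$, $N = 3$, the holomorphic quadratic differentials $\nu_k$ restrict (after trivialization) to the evaluations $1$ and $u_j$, and the unique holomorphic $1$-form $\omega_1$ contributes the column $\zeta_j$; so the result is already guaranteed in principle by Lemma~\ref{lem:independence}, and the content here is the concrete matching of columns.

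The main obstacle I anticipate is bookkeeping rather than conceptual: one must carefully verify that the two holomorphic parameters of $\omega_{12}$ really produce the columns $(1, u_j)$ and that the single parameter $B_3$ of $\omega_{22}$, after multiplication by $-\zeta_j$ from the $-\zeta_j \delta$ term, produces the column $\zeta_j$ — in particular checking that no spurious dependence on $A_3, A_4, B_3$ hides in the quadratic right-hand side or in the $v_j$ terms, and that the common factor $\omega_{21}|_{q_j}$ is genuinely nonzero under the standing assumption $u_j \notin \{0, 1, \lambda, \infty, t\}$. Once these identifications are confirmed, the equivalence with the nonvanishing of the displayed $3 \times 3$ determinant is immediate from Cramer's rule. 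The irregular case $t = 0$ would be handled by the same computation using the local coordinate $y_1$ and the expansions established in the preceding lemma, the algebraic structure of the linear system being identical.
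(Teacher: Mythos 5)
Your proposal is correct and follows essentially the same route as the paper: the paper's proof likewise invokes Lemma~\ref{lem:independence} (via the condition~\eqref{eq:apparent}), writes the three apparent-singularity conditions as the affine-linear system $A_3 + A_4 u_j - \zeta_j B_3 = -C_j$ with the inhomogeneous terms $C_j$ collecting the $\zeta_j^2$, cross, and $A_1,A_2,B_1,B_2$ contributions, and concludes by Cramer's rule with exactly the coefficient matrix you identify. The only difference is that the paper records the $C_j$ explicitly, whereas you describe them qualitatively; the determinant criterion is identical.
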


\begin{proof}
It is just Lemma \ref{lem:independence} specified to the present elliptic case with 2 poles.
We set 
\begin{equation}\label{2023_7_4_13_18}
C_{j}=
\sum_{j'\in\{1,2,3\}\setminus \{j\}}\frac{\zeta_{j'}-\zeta_j}{2} \cdot
\frac{v_j+v_{j'}}{u_j-u_{j'}}
+ \frac{A_1 + A_2 v_j-\zeta_j (B_1 + B_2 v_j) -\zeta_j^2   }{u_j-t} .
\end{equation}
We denote by $((a_j)_j,(b_j)_j,(c_j)_j)$ the $3\times 3$-matrix
$$
((a_j)_j,(b_j)_j,(c_j)_j)=
\begin{pmatrix}
a_1 & b_1 & c_1 \\
a_2 & b_2 & c_2\\
a_3 & b_3 & c_3
\end{pmatrix}.
$$
The condition where $q_1	,q_2,q_3$ are apparent singularities means that 
\begin{equation}\label{2023_7_1_12_13}
((1)_j,(u_j)_j,(-\zeta_j)_j)
\begin{pmatrix}
A_3 \\
A_4 \\
B_3
\end{pmatrix}
=-
\begin{pmatrix}
C_1 \\
C_2 \\
C_3
\end{pmatrix}.
\end{equation}
By Cramer's rule, 
the parameters
$A_3,A_4,B_3$ of the family of connections $\nabla_0$ are uniquely determined 
$$
\begin{aligned}
&A_3= - \frac{\det((C_j)_j,(u_j)_j,(\zeta_j)_j )}{
\det(((1)_j,(u_j)_j,(\zeta_j)_j))} &&
&A_4= - \frac{\det((1)_j,(C_j)_j,(\zeta_j)_j)}{\det((1)_j,(u_j)_j,(\zeta_j)_j)}\\
&B_3=  \frac{\det((1)_j,(u_j)_j,(C_j)_j)}{\det((1)_j,(u_j)_j,(\zeta_j)_j)},
&&
\end{aligned}
$$
if and only if \eqref{2023_7_3_22_49}.
\end{proof}

\begin{lem}\label{lem:unstabledet} We have:
$$\det\begin{pmatrix}1&u_1&\zeta_1\\ 1&u_2&\zeta_2 \\ 
1&u_3&\zeta_3\end{pmatrix}=0$$
if, and only if, $E$ is not stable.
\end{lem}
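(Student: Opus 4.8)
The plan is to characterize non-stability of $E$ through the existence of a line subbundle of degree $1$, and to trace such a subbundle back through the positive elementary transformation $\phi_\nabla$ of~\eqref{eq:phi_nabla} to a sub-line bundle of $E_0 = \mathcal{O}_C \oplus (\Omega^1_C(D))^{-1}$. Recall that here $g=1$ and $n = \deg(D) = 2$, so $N = 4g-3+n = 3$ and $\deg(E) = 2g-1 = 1$; since the degree is odd, stability and semistability coincide, so $E$ fails to be stable precisely when it admits a line subbundle $L$ with $\deg L \geq 1$. Because $\Omega^1_C \cong \mathcal{O}_C$ on the elliptic curve, $(\Omega^1_C(D))^{-1} \cong \mathcal{O}_C(-D)$ has degree $-2$, and $\phi_\nabla$ is the positive elementary transformation of $E_0$ at the three points $q_j$ along the parabolic directions $\ell_j = \langle(\zeta_j \operatorname{d}\! z_j, 1)\rangle$ read off from $\mathrm{res}_{q_j}(\nabla_0)$.

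First I would set up the degree bookkeeping: for a line subbundle $L \subset E$, let $L_0 \subset E_0$ be the saturation of $\phi_\nabla^{-1}(L)$; then $\deg L = \deg L_0 + k$, where $k = \#\{\, j : L_0|_{q_j} = \ell_j \,\} \leq 3$ counts the points at which $L_0$ meets the parabolic direction, the degree gain at such points being the defining feature of the positive elementary transformation. Next I would classify the sub-line bundles of $E_0 = \mathcal{O}_C \oplus \mathcal{O}_C(-D)$: a saturated sub-line bundle of degree $d$ maps to the two factors through sections of $L_0^{-1}$ (degree $-d$) and of $L_0^{-1}\otimes\mathcal{O}_C(-D)$ (degree $-d-2$). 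Examining these shows that $E_0$ has no sub-line bundle of degree $\geq 1$; that its only one of degree $0$ is $\mathcal{O}_C$, whose fibre $\langle(1,0)\rangle$ never equals $\ell_j$ (the latter has nonzero second coordinate); that there is no saturated sub-line bundle of degree $-1$; and that every saturated sub-line bundle of degree $-2$ is isomorphic to $(\Omega^1_C(D))^{-1}$ and arises as the graph of a homomorphism $\tilde\omega \colon (\Omega^1_C(D))^{-1} \to \mathcal{O}_C$, that is, of a section $\omega \in H^0(C, \Omega^1_C(D))$.

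Combining these two steps, $\deg L \geq 1$ forces $\deg L_0 = -2$ and $k = 3$, so that $L_0 = \mathrm{graph}(\tilde\omega)$ passes through all three parabolic directions, which means $\omega(q_j) = \zeta_j \operatorname{d}\! z_j$ in $\Omega^1_C(D)|_{q_j}$ for $j=1,2,3$. Conversely, any $\omega \in H^0(C, \Omega^1_C(D))$ with $\omega(q_j) = \zeta_j \operatorname{d}\! z_j$ yields a degree $-2$ subbundle $\mathrm{graph}(\tilde\omega)$ through all three $\ell_j$, whose saturation in $E$ has degree $1$ and destabilizes $E$. Hence $E$ is not stable if and only if the evaluation system $\omega(q_j) = \zeta_j \operatorname{d}\! z_j$ ($j=1,2,3$) is solvable for $\omega \in H^0(C, \Omega^1_C(D))$; this is exactly the instance, for $g=1$, of the phenomenon described after Lemma~\ref{lem:independence}.

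Finally, choosing the basis of $H^0(C, \Omega^1_C(D))$ given by the sections $1$ and $x_1$ (trivialized by the invariant differential), the values at $q_j$ produce, up to nonzero per-row factors, the columns $(1,1,1)^{\top}$, $(u_1,u_2,u_3)^{\top}$ and the target $(\zeta_1,\zeta_2,\zeta_3)^{\top}$; solvability is precisely the condition that the target lie in the span of the first two columns, i.e. that $\det\begin{pmatrix}1&u_1&\zeta_1\\ 1&u_2&\zeta_2 \\ 1&u_3&\zeta_3\end{pmatrix}=0$, which is the specialization of the determinant~\eqref{eq:determinant} already appearing in Lemma~\ref{2023_7_21_17_07(2)}. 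The main obstacle is the careful degree accounting under the positive elementary transformation together with the classification of low-degree sub-line bundles of the decomposable bundle $E_0$ (in particular ruling out the degree $-1$ case); once these are in place, the identification with the stated determinant is routine.
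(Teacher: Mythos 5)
Your argument is correct and follows the same route as the paper's proof: the vanishing of the determinant is equivalent to the parabolic directions $\langle(\zeta_j\operatorname{d}\! z_j,1)\rangle$ lying on the graph of a global section of $\Omega^1_C(D)$, i.e.\ on a saturated degree $-2$ subbundle of $E_0$ isomorphic to $(\Omega^1_C(D))^{-1}$, whose strict transform under the three positive elementary transformations is a degree $1$ destabilizing subbundle of $E$. The paper's two-line proof only makes this direction explicit, while your degree bookkeeping and classification of the saturated sub-line bundles of $E_0$ supply the converse (non-stability forces $\deg L_0=-2$ and passage through all three parabolic directions), which the paper leaves implicit.
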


\begin{proof} The vanishing of the determinant gives that 
$\zeta_j=\sigma(q_j)$ for a global section 
$\sigma\in H^0(C,\Omega_C^1(D))$. In other words, 
the quasi-parabolic structure on $E_0$ given over each $q_j$
by the eigenvectors corresponding to eigenvalue $1$ lie on a subbundle 
$(\Omega^1_C(D))^{-1}\subset E_0$.
After elementary transformations at each $q_j$, 
we get $L\subset E$ with $\deg(L)=1$ (in fact $L=\det(E)$).
\end{proof}

\subsection{Definition of a rank 2 vector bundle $E$}
We set
$$
\tilde{U}_0 := U_0 \setminus \{ q_1,q_2,q_3\}
\quad 
\text{and}
\quad
\tilde{U}_\infty := U_\infty \setminus \{ q_1,q_2,q_3\}.
$$
We take an analytic open subsets $\tilde U_{q_j}$ ($j=1,2,3$) of $C$ such that 
$q_j \in \tilde U_{q_j}$ and $\tilde U_{q_j}$ are small enough. 
In particular, $(u_j,-v_j) \not\in \tilde U_{q_j}$.
On $\tilde U_{q_j}$, the apparent singular point $q_j$
is defined by $x_1-u_j=0$.
We have an open covering $(\tilde U_k)_{k \in \{ 0,1,q_1,q_2,q_3\} }$ of $C$.
We define transition functions $B_{k_1k_2}$ ($k_1,k_2 \in \{ 0,1,q_1,q_2,q_3\}$)
as follows:
$$
B_{0 q_j} := 
\begin{pmatrix}
1 & \frac{\zeta_j}{x_1 - u_j} \\
0 & \frac{1}{x_1 - u_j}
\end{pmatrix}\colon 
\mathcal{O}^{\oplus 2}_{\tilde U_{q_j}}|_{\tilde U_{0} \cap \tilde U_{q_j}}
\xrightarrow{ \ \sim \ }
\mathcal{O}^{\oplus 2}_{\tilde U_{0}}|_{\tilde U_{0} \cap \tilde U_{q_j}};
$$
$$
B_{0 \infty} := 
\begin{pmatrix}
1 & 0 \\
0 & -x_2
\end{pmatrix}\colon 
\mathcal{O}^{\oplus 2}_{\tilde U_{\infty}}|_{\tilde U_{0} \cap \tilde U_{\infty}}
\xrightarrow{ \ \sim \ }
\mathcal{O}^{\oplus 2}_{\tilde U_{0}}|_{\tilde U_{0} \cap \tilde U_{\infty}}.
$$
Then we have a vector bundle
$$
E = \left((\tilde U_k)_{k \in \{ 0,1,q_1,q_2,q_3\} } , 
\ (B_{k_1k_2} )_{k_1,k_2 \in \{ 0,1,q_1,q_2,q_3\}} \right),
$$
where $E$ is trivial on each $\tilde U_k$
and the transition function from $\tilde U_{k_2}$ to $\tilde U_{k_1}$
is $B_{k_1k_2}$.

\subsection{Definition of a connection $\nabla$ on $E$}

We define matrices $A_0,A_{q_j}, A_\infty$ as follows:
$$
\begin{aligned}
&A_0 := \begin{pmatrix}
0 & \omega_{12}\\
\omega_{21} & \omega_{22}
\end{pmatrix},
&&A_{\infty}:=
\begin{pmatrix}
0 & -x_2 \omega_{12} \\
-\frac{\omega_{21}}{x_2} & \omega_{22} + \frac{\operatorname{d}\! x_2}{x_2}
\end{pmatrix}, \\
&A_{q_j}:=
\begin{pmatrix}
\omega^{(j)}_{11} & \frac{\omega^{(j)}_{12}}{x_1-u_j}\\
(x_1-u_j)\omega_{21} & \omega^{(j)}_{22}
\end{pmatrix}.&&
\end{aligned}
$$
The 1-form $\omega_{12}$, $\omega_{21}$, and $\omega_{22}$ 
are defined in \eqref{2023_7_1_10_5}.
The 1-form $\omega_{12}^{(j)}$, $\omega_{21}^{(j)}$, and $\omega_{22}^{(j)}$ 
are defined as follows:
$$
\begin{aligned}
\omega^{(j)}_{11}&= -\frac{\zeta_j}{x_1-t}   \cdot \frac{\operatorname{d}\!x_1}{y_1} ,  \\
\omega^{(j)}_{12}&=
\sum_{j'\in\{1,2,3\}\setminus \{j\}}\frac{\zeta_{j'}-\zeta_j}{2} \cdot
\frac{y_1+v_{j'}}{x_1-u_{j'}} \cdot \frac{\operatorname{d}\!x_1}{y_1} \\
& \qquad + \left( \frac{A_1 + A_2 y_1-\zeta_j (B_1 + B_2 y_1)  -\zeta_j^2 }{x_1-t} 
+A_3 + A_4 x_1  - \zeta_j B_3 
\right) \frac{\operatorname{d}\!x_1}{y_1} , \\
\omega^{(j)}_{22}&=\frac{1}{2} \cdot \frac{-y_1+v_{j}}{x_1-u_{j}} \cdot \frac{\operatorname{d}\!x_1}{y_1} +
\sum_{j'\in\{1,2,3\}\setminus \{j\}}\frac{1}{2} \cdot
\frac{y_1+v_{j'}}{x_1-u_{j'}} \cdot \frac{\operatorname{d}\!x_1}{y_1}  \\
&\qquad + \left( \frac{B_1 + B_2 y_1 }{x_1-t}  + B_3
+ \frac{\zeta_j}{x_1-t}   \right) \frac{\operatorname{d}\!x_1}{y_1}  .
\end{aligned}
$$

\begin{prop}
\begin{itemize}
\item The $(1,2)$-entry of $A_{q_j}$ is a section of 
$\Omega_C^1(D)|_{\tilde U_{q_j}}$ for each $j=1,2,3$.

\item We define a local connection on each $\tilde U_k$ $(k \in  \{ 0,1,q_1,q_2,q_3\})$ by
$$
\begin{cases}
\operatorname{d}+A_0 \colon \mathcal{O}_{\tilde U_{0}}^{\oplus 2}
\longrightarrow \mathcal{O}_{\tilde U_{0}}^{\oplus 2} \otimes \Omega_C^1(D)|_{\tilde U_{0}} 
& \text{on $\tilde U_{0}$} \\
\operatorname{d}+A_{q_j} \colon \mathcal{O}_{\tilde U_{q_j}}^{\oplus 2}
\longrightarrow \mathcal{O}_{\tilde U_{q_j}}^{\oplus 2} \otimes \Omega_C^1(D)|_{\tilde U_{q_j}} 
& \text{on $\tilde U_{q_j}$}\\
\operatorname{d}+A_\infty \colon \mathcal{O}_{\tilde U_{\infty}}^{\oplus 2}
\longrightarrow \mathcal{O}_{\tilde U_{\infty}}^{\oplus 2} \otimes \Omega_C^1(D)|_{\tilde U_{\infty}} 
& \text{on $\tilde U_{\infty}$}.
\end{cases}
$$
Then we can glue these local connections.
So we have a global connection 
$\nabla \colon E \rightarrow E \otimes \Omega^1_C(D)$ on $E$.
\end{itemize}
\end{prop}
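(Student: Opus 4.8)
The plan is to prove both bullets simultaneously by realizing the local connection matrix $A_{q_j}$ as the gauge transform of $A_0$ by the transition matrix $B_{0q_j}$, and then reading off holomorphicity at $q_j$; the two assertions are two facets of one computation. Throughout I would write $z=x_1-u_j$, which is a genuine local coordinate at $q_j$ since $v_j\neq 0$ forces $\operatorname{d}\!x_1\neq 0$ there, and I would use that $\tilde U_{q_j}$ is chosen small enough to avoid $\mathrm{Supp}(D)$, the conjugate point $(u_j,-v_j)$, and all $u_{j'}$ with $j'\neq j$. In particular $\Omega^1_C(D)|_{\tilde U_{q_j}}=\Omega^1_C|_{\tilde U_{q_j}}$, so on this chart "section of $\Omega^1_C(D)$'' just means "holomorphic $1$-form''.

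For the first bullet, the point is that the apparent-singularity condition of Lemma~\ref{2023_7_21_17_07(2)}, encoded in~\eqref{2023_7_1_12_13}, reads $A_3+u_jA_4-\zeta_jB_3+C_j=0$ for $j=1,2,3$, with $C_j$ as in~\eqref{2023_7_4_13_18}. A direct substitution of $(x_1,y_1)=(u_j,v_j)$ shows that the coefficient of $\tfrac{\operatorname{d}\!x_1}{y_1}$ in $\omega_{12}^{(j)}$ at $q_j$ equals precisely $C_j+A_3+u_jA_4-\zeta_jB_3$, hence vanishes. Since $\tfrac{\operatorname{d}\!x_1}{y_1}$ is a nowhere-vanishing holomorphic $1$-form near $q_j$, this means $\omega_{12}^{(j)}$ vanishes at $q_j$, while $z=x_1-u_j$ has a simple zero there; therefore $\tfrac{\omega_{12}^{(j)}}{x_1-u_j}$ is holomorphic on $\tilde U_{q_j}$, i.e. a section of $\Omega^1_C(D)|_{\tilde U_{q_j}}$.

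For the gluing, I would verify on each overlap the compatibility relation $\operatorname{d}\!B_{k_1k_2}+A_{k_1}B_{k_1k_2}=B_{k_1k_2}A_{k_2}$, the convention already used in the proof of Lemma~\ref{2023_7_10_21_01}. For $(\tilde U_0,\tilde U_\infty)$ with $B_{0\infty}=\operatorname{diag}(1,-x_2)$, this is the identity $A_\infty=B_{0\infty}^{-1}A_0B_{0\infty}+B_{0\infty}^{-1}\operatorname{d}\!B_{0\infty}$ built into the construction of $\nabla_0$ in~\eqref{2023_7_14_17_29}; the only nontrivial check is holomorphicity at $\infty$ of $\omega_{22}+\tfrac{\operatorname{d}\!x_2}{x_2}$ and of the off-diagonal entries, which follows from $\mathrm{res}_\infty\omega_{22}=-2$ together with $\mathrm{res}_\infty\tfrac{\operatorname{d}\!x_2}{x_2}=2$ (recall $y_2$, not $x_2$, is the local coordinate at $\infty$) and the global-section properties of $\omega_{12},\omega_{21}$ recorded after~\eqref{2023_7_1_10_5}. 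For $(\tilde U_0,\tilde U_{q_j})$ the key step is to compute $B_{0q_j}^{-1}A_0B_{0q_j}+B_{0q_j}^{-1}\operatorname{d}\!B_{0q_j}$ with $B_{0q_j}=\left(\begin{smallmatrix}1&\zeta_j/z\\0&1/z\end{smallmatrix}\right)$ and to match it entry by entry with the stated $A_{q_j}$. This is exactly the local model~\eqref{diag_by_conj_by_triangular}--\eqref{conn_matrix_at_qj} of Section~\ref{sect:CompanionNF}, because $B_{0q_j}$ factors as the diagonalizing triangular matrix $\left(\begin{smallmatrix}1&\zeta_j\\0&1\end{smallmatrix}\right)$ followed by the elementary transformation $\operatorname{diag}(1,z^{-1})$; the computation reproduces $\omega_{12}^{(j)}=\omega_{12}-\zeta_j\omega_{22}-\zeta_j^2\omega_{21}$ as well as the stated $\omega_{11}^{(j)}$ and $\omega_{22}^{(j)}$.

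Finally, the overlaps $(\tilde U_\infty,\tilde U_{q_j})$ glue automatically: $\nabla$ is already defined and compatible on the dense open set $C\setminus\{q_1,q_2,q_3\}=\tilde U_0\cup\tilde U_\infty$, the first bullet shows its matrix in the $\tilde U_{q_j}$-frame extends holomorphically across $q_j$, and the transition $B_{\infty q_j}=B_{0\infty}^{-1}B_{0q_j}$ inherits the compatibility relation from the two verified ones via the cocycle identity for $\{B_{k_1k_2}\}$. The same argument applies verbatim in the logarithmic and the irregular cases, since the only case-dependent input — the values of $A_1,A_2,B_1,B_2$ and the uniformizer at the pole — has already been fixed by Lemma~\ref{2023_7_21_17_07} (resp. its irregular analogue) before this statement. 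I expect the single genuinely delicate point to be the first bullet: one must identify the affine normalization of $\beta$ dictated by the linear system~\eqref{2023_7_1_12_13} with the vanishing of $\omega_{12}^{(j)}$ at $q_j$. Once that identification is made, everything else is a finite gauge calculation, with holomorphicity of the remaining pieces tracked by the residue bookkeeping of Lemmas~\ref{2023_7_21_17_07} and~\ref{lem:independence}.
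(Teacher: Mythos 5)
Your proposal is correct and follows essentially the same route as the paper: the first bullet is obtained by observing that the linear system \eqref{2023_7_1_12_13} forces $\omega_{12}^{(j)}|_{q_j}=\left(C_j+A_3+A_4u_j-\zeta_jB_3\right)\frac{\operatorname{d}\!x_1|_{q_j}}{v_j}=0$, so $\frac{\omega_{12}^{(j)}}{x_1-u_j}$ has no pole at $q_j$, and the gluing is the verification of $B_{k_1k_2}^{-1}A_{k_1}B_{k_1k_2}+B_{k_1k_2}^{-1}\operatorname{d}\!B_{k_1k_2}=A_{k_2}$ on overlaps. The extra details you supply (the residue check at $\infty$, the cocycle argument for the $(\tilde U_\infty,\tilde U_{q_j})$ overlaps, the identification with \eqref{diag_by_conj_by_triangular}--\eqref{conn_matrix_at_qj}) are all consistent with, and merely expand on, what the paper leaves implicit.
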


\begin{proof}
Since $A_3,A_4,B_3$ are determined so that these parameters satisfy the condition 
\eqref{2023_7_1_12_13},
we have
$$
\omega_{12}^{(j)} |_{q_j} = 
\left(C_j + A_3 + A_4 u_j  - \zeta_j B_3 \right) \frac{\operatorname{d}\!x_1|_{q_j}}{v_j} =0.
$$
Here, $C_j$ is in \eqref{2023_7_4_13_18}.
So $\frac{\omega^{(j)}_{12}}{x_1-u_j}$ has no pole at $q_j$ for each $j=1,2,3$.
Since we have 
$$
B_{k_1k_2}^{-1}A_{k_1}B_{k_1k_2} 
+B_{k_1k_2}^{-1} \operatorname{d}\! B_{k_1k_2} = A_{k_2} 
$$
for each $k_1,k_2 \in \{ 0,\infty ,q_1,q_2,q_3\}$, 
the connection $\nabla$ acting on $E$ is defined globally. 
\end{proof}

\begin{remark}
By Definition \ref{2023_7_2_11_52}
in Section \ref{2023_7_4_13_59},
we have trivializations of $E$.
On $C \setminus \{ t_1,t_2 \}$, 
the trivializations in
Definition \ref{2023_7_2_11_52} 
coincide with 
the trivializations described in the present section.
We have defined the trivialization in
Definition \ref{2023_7_2_11_52} at $t_i$ $(i=1,2)$
so that the residue matrix (respectively, the polar part in the reduced case) is a diagonal matrix.
On the other hand, 
by the trivializations described in the present section, 
the residue matrix at $t_i$ $(i=1,2)$ (respectively, the polar part) is not a diagonal matrix.
The reason why the residue matrix at $t_i$ $(i=1,2)$ is a diagonal matrix
is that 
the corresponding description of the variation \eqref{2023_7_4_14_16}
satisfies the compatibility conditions of the quasi-parabolic structure in $\mathcal{F}^0$
and $\mathcal{F}^1$
of \eqref{2023_7_4_14_18}.
On the other hand, now we are interested in behavior of the connection $\nabla$
around $q_j$ $(j=1,2,3)$.
So now we do not consider the diagonalization of the residue matrices at $t_i$ $(i=1,2)$ (respectively, of the polar part when $D$ is reduced).
\end{remark}

\subsection{Canonical coordinates}

We will calculate the canonical coordinates introduced 
in Section \ref{subsect:Canonical_Coor}.
For the transition functions $B_{k_1k_2}$ ($k_1,k_2 \in \{ 0,1,q_1,q_2,q_3\}$)
of $E$, we have transition functions of $\det(E)$
as follows:
$$
\det(B_{0 q_j}) = 
\frac{1}{x_1 - u_j} \colon 
\mathcal{O}_{\tilde U_{q_j}}|_{\tilde U_{0} \cap \tilde U_{q_j}}
\xrightarrow{ \ \sim \ }
\mathcal{O}_{\tilde U_{0}}|_{\tilde U_{0} \cap \tilde U_{q_j}};
$$
$$
\det(B_{0 \infty} ) = 
-x_2 \colon 
\mathcal{O}_{\tilde U_{\infty}}|_{\tilde U_{0} \cap \tilde U_{\infty}}
\xrightarrow{ \ \sim \ }
\mathcal{O}_{\tilde U_{0}}|_{\tilde U_{0} \cap \tilde U_{\infty}}.
$$
So we have a cocycle 
$(\det(B_{k_1k_2} ))_{k_1,k_2 \in \{ 0,1,q_1,q_2,q_3\}}$,
which gives a class of $H^1(C,\mathcal{O}_C^{*})$.
We have
$$
\operatorname{d}  \log (\det(B_{0 q_j})) = - \frac{\operatorname{d}\!x_1}{x_1 - u_j}
\quad \text{and} \quad 
\operatorname{d} \log (\det(B_{0 \infty} )) = \frac{\operatorname{d}\!x_2}{x_2},
$$
and these 1-forms give a class of $H^1(C,\Omega^1_C)$.
We denote by $c_1$ and $\boldsymbol{\Omega}(D,c_1)$ the class of $H^1(C,\Omega^1_C)$
and the total space of the twisted cotangent bundle corresponding to $c_1$,
respectively.
We have the following description of $\mathrm{tr}(\nabla)$:
$$
\mathrm{tr}(\nabla)
=
\begin{cases}
\operatorname{d}+\omega_{22} \colon \mathcal{O}_{\tilde U_{0}}
\longrightarrow \mathcal{O}_{\tilde U_{0}} \otimes \Omega_C^1(D)|_{\tilde U_{0}} 
& \text{on $\tilde U_{0}$} \\
\operatorname{d} +\omega_{11}^{(j)}+\omega_{22}^{(j)} \colon 
\mathcal{O}_{\tilde U_{q_j}}
\longrightarrow \mathcal{O}_{\tilde U_{q_j}}^{\oplus 2} \otimes \Omega_C^1(D)|_{\tilde U_{q_j}} 
& \text{on $\tilde U_{q_j}$}\\
\operatorname{d} +\omega_{22} + \frac{\operatorname{d}\!x_2}{x_2} 
\colon \mathcal{O}_{\tilde U_{\infty}}
\longrightarrow \mathcal{O}_{\tilde U_{\infty}}^{\oplus 2} \otimes \Omega_C^1(D)|_{\tilde U_{\infty}} 
& \text{on $\tilde U_{\infty}$}.
\end{cases}
$$
Notice that we have
$$
\omega_{11}^{(j)}+\omega_{22}^{(j)} = \omega_{22} + \operatorname{d} 
  \log (\det(B_{0 q_j})), \text{ and}
$$
$$
\omega_{22} + \frac{\operatorname{d}\! x_2}{x_2} 
= \omega_{22}+\operatorname{d} \log (\det(B_{0 \infty} )) .
$$
So these connection matrices of $\mathrm{tr}(\nabla)$ give an explicit 
global section of $\boldsymbol{\Omega}(D,c_1) \rightarrow C$.
We consider a section of $\boldsymbol{\Omega}(D,c_1) |_{\tilde U_{q_j}}\rightarrow \tilde U_{q_j}$
$$
 \frac{ \zeta_j \operatorname{d}\!x_1}{(x_1-t)y_1} +\omega_{11}^{(j)}+\omega_{22}^{(j)}.
$$
For this section on $\tilde U_{q_j}$, we define $p_j$ ($j=1,2,3$) by
$$
p_j = \mathrm{res}_{q_j} \left( \frac{\zeta_j}{x_1-u_j}\cdot 
 \frac{\operatorname{d}\!x_1}{(x_1-t)y_1}  \right)
+ \mathrm{res}_{q_j} \left( \frac{\omega_{11}^{(j)}+\omega_{22}^{(j)}}{x_1 - u_j}\right) .
$$
Then we have a map
$$
(E,\nabla) \longmapsto 
(u_1,u_2,u_3,\zeta_1,\zeta_2,\zeta_3) 
\longmapsto 
(u_1,u_2,u_3, p_1,p_2, p_3),
$$
where
$$
\begin{aligned}
p_j&= \frac{\zeta_j}{(u_j-t)v_j} - \frac{K'(u_j)}{4v_j^2}  +
\sum_{j'\in\{1,2,3\}\setminus \{j\}}\frac{1}{2} \cdot
\frac{v_j+v_{j'}}{u_j-u_{j'}} \cdot \frac{1}{v_j}  \\
&\qquad + \left( \frac{B_1 + B_2 v_j }{u_j-t}  + B_3 \right) \frac{1}{v_j}  
\end{aligned}
$$
Here we set
$K(x_1) := x_1(x_1-1)(x_1 - \lambda)$.
Notice that $B_1$ and $B_2$ are determined by Lemma \ref{2023_7_21_17_07} 
and $B_3$ is determined by Lemma \ref{2023_7_21_17_07(2)}.
Notice that $B_3$ depends on $\zeta_1,\zeta_2$ and $\zeta_3$.
The symplectic structure is $\sum_{j=1}^3 \operatorname{d}\! p_j \wedge \operatorname{d}\! u_j$ 
by Theorem \ref{2023_8_22_12_09}.

\section{Canonical coordinates revised and another proof for birationality}\label{Sec:Higgs}

In this section, we will give another proof of Proposition \ref{prop:birational}.
For simplicity, we will consider the cases where $D$ is a reduced effective divisor. 
Let $(E, \nabla) \in M_X^0 $ be a connection on a fixed irregular curve 
$X =(C,   D,  \{ z_i \}_{i \in I}, \{ \theta_i \}_{i \in I},  \theta_{res})$ 
with genericity conditions as before.  

 We set $D = t_1 + \cdots + t_n$  and the connection is given  by
$$
\nabla \colon  E \lra E \otimes \Omega^1_C(D).   
$$
In this section, we assume that $g= g(C) \geq 1$ and $n \geq 1$ as in the previous sections.
Moreover if $g=g(C) =1$, we assume that $n \geq 2$.  

Note that we have the unique extension 
\begin{equation}\label{eq:ext_6}
0 \lra  \cO_C \lra E \lra L_0 \lra 0
\end{equation}
with $L_0 = \det (E)$.  
Moreover for $(E, \nabla) \in M^0_X$ we have
$\deg L_0= 2g-1$ and $\dim_{\mathbb{C}} H^0(C, E)=1$.   
Then we can define  apparent singularities $q_1, \ldots, q_N \in C$ where 
$N  = 4g-3 +n$.
Since $ \deg D =2g-2+n \geq 1$ and $\deg L_0 =2g-1  \geq 1$,   
we see that $\dim_{\mathbb{C}} H^0(C, \Omega^1_C(D))  = g-1+n  \geq  2$.  
We can  choose $\gamma\in H^0(C,  \Omega^1_C(D)) $
and $ s \in H^0(C, L_0)$ whose zeros are given by 
$$
\{ \gamma=0 \} =\{  c_1, \ldots,  c_{2g-2+n}  \}\quad \text{and} \quad 
\{ s =0 \} = \{u_1, \ldots, u_{2g-1}\}.
$$
We assume the following genericity conditions: 
\begin{enumerate}
\item $u_{i_1} \not= u_{i_2}$ (for $ i_1\not=i_2$), 
and $c_{k_1} \not= c_{k_2}$ (for $k_1 \neq k_2$); 
\item 
$  \{ u_1, \ldots, u_{2g-1} \}  \cap \{ c_1, \ldots, c_{2g-2 + n } \}  = \emptyset$;
\item 
$\{ q_1, \ldots, q_N \} \cap \{u_1, \ldots, u_{2g-1},  c_1, \ldots, 
c_{2g-2 +n } \} = \emptyset.$
\end{enumerate}
Set 
$$
U_0 = C \setminus \{u_1, \ldots, u_{2g-1}, c_1, \ldots, c_{2g-2 + n } \}.
$$
Moreover we take small an analytic neighborhood $ U_{i} $ of $u_i$ for $1 \leq i \leq 2g-1$ 
and  $U_{2g-1+k} $ of $c_k$ for $1 \leq k \leq 2g-2+n$.  For $i = 1,\ldots, 4g-3+n$, 
we can identify $U_i$ with a unit disc $\Delta = \{  z \in \C \mid |z| <  1 \}$   
with the origin corresponding to $u_i$ ($1 \leq i \leq 2g-1$) and
$c_{i-2g+1}$ ($2g \leq i \leq 4g-3+n$).  
We can assume that $U_{i_1} \cap U _{i_2} = \emptyset $ for $ i_1 \not=i_2$, $i_1, i_2 \geq 1$.  
Note that since $U_0$ is an affine variety and $U_0 \cap U_i \cong  \Delta  \setminus \{ 0 \} $
for $ i=1, \ldots, 4g-3+n $,
the covering
$C =   U_0 \cup U_1 \cup \cdots \cup U_{4g-3+n}  $ gives a Stein covering of $C$.  
 For $0 \leq i \leq 4g-3+n $, 
we have nonzero sections $\be_1^{(i)} \in \mathcal{O}_{U_i}, \be_2^{(i)}  \in (L_0)_{|U_i}$ giving trivializations of $E$ on $U_i$ respectively:  
 $$
 E_{|U_i} \simeq \cO_{|U_i}\be_1^{(i)} \oplus \cO_{|U_i} \be_2^{(i)}.
 $$
 Moreover we have a transition matrix $H_{0i}$ on $U_{0} \cap U_{i}$ of the form 
 \begin{equation}
 H_{0i}= \begin{pmatrix}
1 & h_{0i} \\
0 & g_{0i} 
\end{pmatrix}
\end{equation}
satisfying 
\begin{equation}\label{eq:trans_6}
(\be_1^{(i)}, \be_2^{(i)} ) =  (\be_1^{(0)}, \be_2^{(0)}) H_{0i} =(  \be_1^{(0)}, h_{0i} \be_1^{(0)}+   g_{0i}  \be_2^{(0)} ) .  
\end{equation}
Here $\{h_{0i}\}_i  \in \operatorname{Ext}^1(L_0, \cO_C) \cong H^1(C, L_0^{-1})$ corresponds to the 
extension class of (\ref{eq:ext_6})   and 
$\{  g_{0i}  \}_i  \in H^1(C, \cO_C^{*})$  gives the transition function of $L_0 = \det(E)$.
With these trivializations we have connection matrices $A^{(i)}$:
\begin{equation}\label{eq:conn_6}
\nabla (\be_1^{(i)}, \be_2^{(i)})  =(\be_1^{(i)}, \be_2^{(i)})  A^{(i)}
\end{equation}
of the form 
\begin{equation}
A^{(i)} = \begin{pmatrix} a_{11}^{(i)} \gamma_i   &  a_{12}^{(i)} \gamma_i \\
a_{21}^{(i)} \gamma_i   &  a_{22}^{(i)}  \gamma_i  
\end{pmatrix}.
\end{equation}
Here $a_{kl}^{(i)} \in \Gamma(U_i, \cO_{U_i})$
and 
$\gamma_i  \in  \Gamma(U_i, \Omega^1_{U_i}(D))$.  
We set $\gamma_0 =\gamma_{|U_0}$  as above.

From  (\ref{eq:trans_6}) and (\ref{eq:conn_6}), we can verify the following
\begin{lem}
For $1 \leq i \leq 4g-3 + n$,  on $U_{0} \cap U_{i}$, we gave 
\begin{equation}
A^{(i)}  = H_{0i}^{-1} A^{(0)} H_{0i}  +  H_{0i}^{-1}  \operatorname{d}\! H_{0i}.
\end{equation}
Specifically, we have the following identities:
\begin{equation} \label{eq:app_6}
a_{21}^{(i)} \gamma_i =  a_{21}^{(0)}  \gamma_0  g_{0i}^{-1}; \quad  \text{and}
\end{equation}
\begin{equation} \label{eq:dual_6}
a_{22}^{(i)} \gamma_i =   a_{22}^{(0)}  \gamma_0 +  a_{21}^{(0)} \gamma_0  h_{0i} g_{0i}^{-1} 
+ \frac{\operatorname{d}\! g_{0i}}{g_{0i}} .
\end{equation}
\end{lem}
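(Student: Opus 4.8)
The plan is to recognize the first displayed identity as the standard transformation law of a connection matrix under a change of local frame, and then to obtain the two scalar identities~\eqref{eq:app_6} and~\eqref{eq:dual_6} by reading off the $(2,1)$- and $(2,2)$-entries of both sides.

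First I would apply $\nabla$ to the frame relation~\eqref{eq:trans_6}, namely $(\be_1^{(i)}, \be_2^{(i)}) = (\be_1^{(0)}, \be_2^{(0)}) H_{0i}$ on $U_0 \cap U_i$. Treating $H_{0i}$ as a matrix of functions and invoking the Leibniz rule for $\nabla$, I obtain
\[
\nabla (\be_1^{(i)}, \be_2^{(i)}) = \left( \nabla (\be_1^{(0)}, \be_2^{(0)}) \right) H_{0i} + (\be_1^{(0)}, \be_2^{(0)}) \operatorname{d}\! H_{0i}.
\]
Substituting the definition~\eqref{eq:conn_6} of $A^{(0)}$ on the right and of $A^{(i)}$ on the left, both sides become $(\be_1^{(0)}, \be_2^{(0)})$ multiplied by an explicit matrix of $1$-forms. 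Since $(\be_1^{(0)}, \be_2^{(0)})$ is a frame over $U_0 \cap U_i$, I may cancel it and arrive at $H_{0i} A^{(i)} = A^{(0)} H_{0i} + \operatorname{d}\! H_{0i}$, which rearranges to the asserted identity $A^{(i)} = H_{0i}^{-1} A^{(0)} H_{0i} + H_{0i}^{-1} \operatorname{d}\! H_{0i}$.

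For the two specific identities, I would insert the explicit upper-triangular shape $H_{0i} = \left(\begin{smallmatrix} 1 & h_{0i} \\ 0 & g_{0i} \end{smallmatrix}\right)$ together with its inverse $H_{0i}^{-1} = \left(\begin{smallmatrix} 1 & -h_{0i} g_{0i}^{-1} \\ 0 & g_{0i}^{-1} \end{smallmatrix}\right)$ and perform the two matrix products. The bottom row of the conjugate is insensitive to the top-left entry, so the $(2,1)$-entry of $H_{0i}^{-1} A^{(0)} H_{0i}$ is simply $a_{21}^{(0)} \gamma_0 g_{0i}^{-1}$, while the $(2,1)$-entry of $H_{0i}^{-1}\operatorname{d}\! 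H_{0i}$ vanishes; this gives~\eqref{eq:app_6}. For the $(2,2)$-entry, the conjugation contributes $a_{22}^{(0)} \gamma_0 + a_{21}^{(0)} \gamma_0 h_{0i} g_{0i}^{-1}$ and the inhomogeneous term $H_{0i}^{-1}\operatorname{d}\! H_{0i}$ contributes the logarithmic derivative $g_{0i}^{-1} \operatorname{d}\! g_{0i} = \operatorname{d}\! g_{0i}/g_{0i}$, yielding~\eqref{eq:dual_6}.

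I expect no genuine obstacle here: the statement is just the gauge-covariance of a connection, and the verification is purely algebraic. The only point requiring a little care is bookkeeping, namely tracking how the off-diagonal entry $h_{0i}$ propagates into the $(2,2)$-slot under conjugation and keeping the additive term $H_{0i}^{-1}\operatorname{d}\! H_{0i}$ separate from the conjugation term. It is worth emphasizing that~\eqref{eq:app_6} and~\eqref{eq:dual_6} are equalities of genuine sections of $\Omega^1_C(D)$ over $U_0 \cap U_i$, so the scalar coefficients $a_{kl}^{(i)}$ automatically absorb the transition ratio between the generators $\gamma_i$ and $\gamma_0$, and no separate comparison of $\gamma_i$ with $\gamma_0$ is required.
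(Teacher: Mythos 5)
Your proof is correct and is exactly the verification the paper has in mind: the paper simply asserts the lemma follows ``from~\eqref{eq:trans_6} and~\eqref{eq:conn_6}'', i.e.\ the standard gauge-transformation law obtained by applying $\nabla$ to the frame relation and using the Leibniz rule, followed by reading off the $(2,1)$- and $(2,2)$-entries. Your entry-by-entry computation with $H_{0i}^{-1}=\left(\begin{smallmatrix} 1 & -h_{0i}g_{0i}^{-1} \\ 0 & g_{0i}^{-1}\end{smallmatrix}\right)$ reproduces both identities, and your closing remark that the equalities are of $1$-forms (so no separate comparison of $\gamma_i$ with $\gamma_0$ is needed) is the right way to read the statement.
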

The identity (\ref{eq:app_6}) shows that  $a_{21}^{(i)}\gamma_i $  defines a section of $H^0(C, \Omega_1(D) \otimes L_0)$ and the zeros of this  section are nothing but the 
apparent singularities $q_1, \ldots, q_N$.  
Evaluating the identity (\ref{eq:dual_6}) at $q_j$
($j=1,\ldots,N$), we then have 
\begin{equation}\label{eq:twist_6}
  ( a_{22}^{(i)}  \gamma_i)_{q_j}  = 
   (a_{22}^{(0)}  \gamma_0)_{q_j} + 
  \left( \frac{\operatorname{d}\! g_{0i}}{g_{0i}}\right)_{q_j} 
\end{equation}
Noting that the cohomology class of the cocycle 
$\left\{ \frac{\operatorname{d}\!g_{0i}}{g_{0i}} \right\}_i$ 
corresponds to $c_d = c_1(L_0)$, from (\ref{eq:twist_6}),  we have the following 
\begin{prop}
For each $ 0 \leq j \leq N$,  
the data $(E, \nabla) \in M^0_X$ defines $N$ points  $(q_j, \tilde{p}_j)$ on 
the total space of  $ \boldsymbol{\Omega}(D,c_d) $  by the formula 
\begin{equation}
\tilde{p}_j  =   (a_{22}^{(0)}  \gamma_0)_{q_j} \in \Omega^1_C(D, c_d)_{|q_j}
\end{equation}
\end{prop}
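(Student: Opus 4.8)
The plan is to show that the recipe $q_j \mapsto (a_{22}^{(0)}\gamma_0)_{q_j}$ produces a genuine point of the \emph{twisted} cotangent bundle $\boldsymbol{\Omega}(D,c_d)$, and not merely of the ordinary $\Omega^1_C(D)$. The locations $q_1,\dots,q_N$ require no further argument: by~\eqref{eq:app_6} the local expressions $a_{21}^{(i)}\gamma_i$ glue (via the invertible factors $g_{0i}^{-1}$) into a global section of $L_0\otimes\Omega^1_C(D)$, whose zero divisor is by construction the set of apparent singularities. So the whole content is the well-definedness of the fibre value $\tilde p_j$.

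First I would recall the affine-bundle description of $\boldsymbol{\Omega}(D,c_d)$ attached to the extension class $c_d\in H^1(C,\Omega^1_C)$ of~\eqref{2023_3_24_19_28}. Over the Stein cover trivialising $\det(E)=L_0$ with transition functions $g_{0i}$, a point of $\boldsymbol{\Omega}(D,c_d)$ lying over $q$ is specified by local values $\eta_i\in\Omega^1_C(D)|_q$ obeying the \emph{affine} gluing rule $\eta_i=\eta_0+(\operatorname{d} g_{0i}/g_{0i})|_q$, where the non-tensorial term is exactly the $\operatorname{d}\log$ cocycle whose class is $c_d=c_1(L_0)$. Matching the data of the Proposition to this rule is then immediate from~\eqref{eq:dual_6}: evaluating it at $q_j$, the middle term $a_{21}^{(0)}\gamma_0\,h_{0i}g_{0i}^{-1}$ drops out because $a_{21}^{(0)}\gamma_0$ vanishes at the apparent singularity $q_j$, leaving precisely~\eqref{eq:twist_6}. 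Thus the family $\{(a_{22}^{(i)}\gamma_i)_{q_j}\}_i$ satisfies the twisted-cotangent gluing rule and determines a single point $\tilde p_j\in\Omega^1_C(D,c_d)|_{q_j}$, with $(a_{22}^{(0)}\gamma_0)_{q_j}$ as its representative in the chart $U_0$ (note $q_j\in U_0$ by the genericity assumption that the $q_j$ avoid the $u_i$ and the $c_k$).

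Next I would verify that $\tilde p_j$ is independent of the auxiliary choices. The quantity $a_{22}^{(i)}\gamma_i$ is the intrinsic lower-right coefficient of the connection matrix (the product, not the splitting into $a_{22}^{(i)}$ and $\gamma_i$), so it does not depend on $\gamma$ or $s$, which serve only to build the Stein cover and the trivialisations. The remaining freedom in $U_0$ preserving the upper-triangular transitions is the replacement $\be_2^{(0)}\mapsto\be_2^{(0)}+f\,\be_1^{(0)}$, which conjugates $A^{(0)}$ by a unipotent upper-triangular matrix and alters the $(2,2)$-entry by $f\,a_{21}^{(0)}\gamma_0$; this correction again vanishes at $q_j$, so $\tilde p_j$ is unchanged. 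Rescaling $\be_1^{(0)}$ is by a global constant, since $\mathcal O_C\subset E$ is unique up to scalar, and leaves diagonal entries untouched. Collecting these facts yields the $N$ points $(q_j,\tilde p_j)$ on $\boldsymbol{\Omega}(D,c_d)$.

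The main obstacle is conceptual rather than computational: one must see that the \emph{a priori} non-tensorial correction appearing in~\eqref{eq:dual_6} is exactly the $c_d$-twist, and that it is killed off the $a_{21}$-term precisely at the apparent singularities. Indeed, although $\{a_{22}^{(i)}\gamma_i\}$ is \emph{not} the connection matrix of any global connection on $L_0$ (the subsheaf $\mathcal O_C\subset E$ fails to be $\nabla$-invariant, which is what $a_{21}\neq 0$ records), its restriction to the reduced divisor $q_1+\cdots+q_N$ nevertheless lands in $\boldsymbol{\Omega}(D,c_d)$. This is the step where the twist in Definition~\ref{2023_7_12_23_06} becomes indispensable, and it is what makes the term $\operatorname{tr}(\nabla)|_{q_j}$ meaningful in the accessory-parameter formula.
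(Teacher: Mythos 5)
Your proof is correct and follows essentially the same route as the paper: evaluate~\eqref{eq:dual_6} at $q_j$, observe that the term $a_{21}^{(0)}\gamma_0\,h_{0i}g_{0i}^{-1}$ vanishes because $q_j$ is a zero of the global section $a_{21}\gamma$, and recognise the resulting cocycle $\operatorname{d}\!g_{0i}/g_{0i}$ in~\eqref{eq:twist_6} as a representative of $c_d=c_1(L_0)$, so the local values glue to a point of $\boldsymbol{\Omega}(D,c_d)$. Your additional verification that $\tilde p_j$ is independent of the choices of $s$, $\gamma$ and of the residual unipotent freedom in the trivialisation is a welcome elaboration of a claim the paper states immediately after the proposition without proof.
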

The above definition of $\tilde{p}_j$ does not depend on the choice of the sections $s
\in H^0(C, L_0) $ and 
$\gamma\in H^0(C, \Omega^1_C(D)) $ and defines the same map as in Definition~\ref{2023_7_12_23_06}:
\begin{equation}
 f_{\mathrm{App}}\colon M^0_X  \lra   
  \operatorname{Sym}^N (\boldsymbol{\Omega}(D,c_d)) . 
\end{equation}

Now we consider $q_j$ as a local coordinate  near $q_j$  and we write 
$\gamma =  c(q_j)  \operatorname{d}\! q_j$ for some local holomorphic function $c(q_j)$. Then we have 
$$
\tilde{p}_j  =  p_j    \operatorname{d}\! q_j   
$$ 
with 
$$
p_j = a_{22}^{(0)}(q_j) c(q_j)  .
$$
As we have proved in Theorem~\ref{2023_8_22_12_09}, the map  $f_{\mathrm{App}}$ is symplectic.

\subsection{From a connection to a Higgs field}

Keeping the notation, let us consider the section $s   \in H^0(C, L_0)$ as before, and set $s^{(0)} = s$. 
Take  trivialization of $L_{0|U_i}$ over $U_i$  we have a holomorphic function $s^{(i)}  \in 
\Gamma(U_i, \cO_{U_i})$ such that 
$$
s^{(0)}=  g_{0i} s^{(i)}.
$$
Note that $s^{(i)}$ has zeros at $ u_i \in U_i$ for $1 \leq i \leq 2g-1$.  
Set $D(s)= u_1+\cdots + u_{2g-1}$. We can show the following
\begin{lem}
There exists  a connection 
$$
\nabla_{1}\colon  E \lra E \otimes \Omega_C^1(D(s))
$$
such that for each $0\leq i \leq N = 4g -3 + n$, on $U_i$ it has the form
$$
\nabla_1^{(i)}  =  \operatorname{d} + \,  S^{(i)} =\operatorname{d} +  
\begin{pmatrix}   0  &  -\frac{\beta_i}{s^{(i)} } \\
0  &  -\frac{\operatorname{d}\!s^{(i)}}{s^{(i)}}  
\end{pmatrix}
$$
with respect to the trivialization $(\be_1^{(i)}, \be_2^{(i)})$. 
Here $\beta_i \in \Gamma(U_i,\Omega^1_{U_i})$.  
\end{lem}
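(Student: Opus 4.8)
The plan is to characterize $\nabla_1$ intrinsically and then realize it in the given trivializations. The connection we seek makes the global section $\be_1 = \be_1^{(0)}$ spanning the destabilizing subsheaf $\cO_C \subset E$ horizontal, and induces on the quotient $L_0 = E/\cO_C$ the logarithmic connection $\operatorname{d} - \operatorname{d}\!\log s$ for which $s$ is horizontal (its poles are supported exactly on $D(s)$); any such connection has the asserted block shape. First I would prescribe the local forms $S^{(i)}$ on each $U_i$ and impose that they glue into a global $\nabla_1$, i.e. that $S^{(i)} = H_{0i}^{-1} S^{(0)} H_{0i} + H_{0i}^{-1}\operatorname{d}\! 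H_{0i}$ on $U_0\cap U_i$. Using $s^{(0)} = g_{0i}\, s^{(i)}$, a direct computation (entirely parallel to the lemma preceding this statement, with $H_{0i}$ upper triangular) shows that the $(1,1)$-, $(2,1)$- and $(2,2)$-entries are automatically of the prescribed shape, so the entire content is concentrated in the $(1,2)$-entry, where the gluing reduces to the single scalar relation
$$
\beta_i = \beta_0 - \operatorname{d}\!\bigl( h_{0i}\, s^{(i)} \bigr) \qquad \text{on } U_0\cap U_i .
$$

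Thus the existence of $\nabla_1$ is equivalent to producing holomorphic $1$-forms $\beta_i \in \Gamma(U_i, \Omega^1_{U_i})$ for $0\le i \le N$ satisfying these relations. Since the discs $U_i$ ($i\ge 1$) are pairwise disjoint, the only nonempty overlaps are the punctured discs $U_0\cap U_i$ and there are no triple intersections; hence $\{\operatorname{d}(h_{0i}s^{(i)})\}_i$ is automatically a \v{C}ech $1$-cocycle for the Stein cover $\{U_k\}$ with values in $\Omega^1_C$, and the displayed relations say precisely that this cocycle is a coboundary, with $\{\beta_i\}$ as a primitive. So it suffices to prove that the class $[\{\operatorname{d}(h_{0i}s^{(i)})\}]\in H^1(C, \Omega^1_C)$ vanishes.

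The key point, and where I expect the real work to lie, is that this class is a \emph{logarithmic-derivative} type class and hence killed by $\operatorname{d}$: writing $f_{0i} := h_{0i}\, s^{(i)} \in \Gamma(U_0\cap U_i, \cO_C)$, the cocycle equals $\operatorname{d}_*[\{f_{0i}\}]$, where $\operatorname{d}_* \colon H^1(C,\cO_C) \to H^1(C,\Omega^1_C)$ is the map induced by $\operatorname{d}\colon \cO_C \to \Omega^1_C$. I would then invoke the holomorphic de Rham sequence $0\to \C \to \cO_C \xrightarrow{\operatorname{d}} \Omega^1_C \to 0$ together with its long exact cohomology sequence: because $H^2(C,\cO_C)=0$ while $H^1(C,\Omega^1_C)\cong \C \cong H^2(C,\C)$, the connecting homomorphism $H^1(C,\Omega^1_C)\to H^2(C,\C)$ is an isomorphism, which forces $\operatorname{d}_* = 0$. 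Consequently $[\{\operatorname{d}(h_{0i}s^{(i)})\}]=0$, the primitive $\{\beta_i\}$ exists (uniquely up to a global element of $H^0(C,\Omega^1_C)$, reflecting the freedom in the choice of $\nabla_1$), and each $\beta_i$ is a genuine holomorphic $1$-form on $U_i$.

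It remains to verify the pole structure, which is routine. On $U_0$ and on the discs around the zeros $c_k$ of $\gamma$ the section $s^{(i)}$ is nowhere vanishing, so $S^{(i)}$ is holomorphic there; on a disc around a zero $u_i$ of $s$, which is simple by the genericity hypothesis, the entries $-\beta_i/s^{(i)}$ and $-\operatorname{d}\! s^{(i)}/s^{(i)}$ acquire at worst a simple pole, i.e. a pole supported on $D(s)$. Hence the glued data define a global connection $\nabla_1\colon E\to E\otimes\Omega^1_C(D(s))$ of exactly the asserted form, completing the argument.
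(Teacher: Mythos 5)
Your proposal is correct and follows essentially the same route as the paper: prescribe the local matrices $S^{(i)}$, impose the gluing condition $S^{(i)} = H_{0i}^{-1}S^{(0)}H_{0i} + H_{0i}^{-1}\operatorname{d}\!H_{0i}$, reduce it to the scalar relation $\beta_i = \beta_0 - \operatorname{d}(h_{0i}s^{(i)})$, and conclude by showing that the class $[\{\operatorname{d}(h_{0i}s^{(i)})\}]$ vanishes in $H^1(C,\Omega^1_C)$. The only (harmless) difference is that the paper justifies the vanishing of $\operatorname{d}_*\colon H^1(C,\mathcal{O}_C)\to H^1(C,\Omega^1_C)$ by appealing to Hodge theory, whereas you derive it from the long exact sequence of $0\to\mathbb{C}\to\mathcal{O}_C\to\Omega^1_C\to 0$; both are valid.
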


\begin{proof}
Since $s^{(0)}= g_{0i}  s^{(i)}$, one has 
$$
\frac{\operatorname{d}\! s^{(0)}}{s^{(0)}} = \frac{\operatorname{d}\!g_{0i}}{g_{0i}}
+  \frac{\operatorname{d}\! s^{(i)}}{s^{(i)}} 
$$
in $U_{0i} = U_0 \cap U_i$.  
The compatibility condition for connection matrices $S^{(i)}$ is  
\begin{equation} \label{eq:sp_6}
S^{(i)} = H_{0i}^{-1} S^{(0)} H_{0i} +   H_{0i}^{-1} \operatorname{d}\! H_{0i}.
\end{equation}
The right hand side of (\ref{eq:sp_6}) is  
\begin{equation} \label{eq:iden_6}
\begin{pmatrix}
0  &  -g_{0i} \frac{\beta_0}{s^{(0)}}  +  h_{0i} \left(\frac{\operatorname{d}\! s^{(0)}}{s^{(0)}}-  
\frac{\operatorname{d}\! g_{0i}}{g_{0i}} \right) + \operatorname{d}\! h_{0i}  \\
0  &  -\frac{\operatorname{d}\! s^{(0)}}{s^{(0)}}  +  \frac{\operatorname{d}\! g_{0i}}{g_{0i}}
\end{pmatrix}
\end{equation}
Since $\{ h_{0i}  \}_i$ is a class in $H^1(C, L_0^{-1})$ and $s \in H^0(C, L_0)$, 
the class  $\{ s^{(i)} h_{0i} \}_i $ defines a class in $H^1(C, \cO_{C})$.  
Then, by the Hodge theory, the derivative   $\{  \operatorname{d} (s^{(i)} h_{0i} )  \}_i \in H^1(C, \Omega^1_C)$ vanishes, so 
there exist $\beta_i \in \Gamma(U_i, \Omega^1_{U_i})$  such that 
$$
\operatorname{d} (s^{(i)}  h_{0i} ) =  \beta_0 - \beta_i.  
$$
Choose such $ \beta_i $'s for the formula.  Then we have 
$$
 \operatorname{d}\! h_{0i} 
 = - h_{0i} \frac{\operatorname{d}\!s^{(i)}}{s^{(i)} } + g_{0i} \frac{\beta_0}{s^{(0)}}-
 \frac{ \beta_i}{s^{(i)}}.
$$
Then the right hand side of (\ref{eq:iden_6}) becomes 
$$
\begin{pmatrix}
0  &  - \frac{\beta_i}{s^{(i)}} \\
0 & -\frac{\operatorname{d}\! s^{(i)}}{s^{(i)} }
\end{pmatrix}
$$
as desired.
\end{proof}

For any $(E, \nabla) \in M^0_X$,  the difference 
$$
\nabla- \nabla_1\colon E \lra  E  \otimes \Omega^1_C(D+D(s))
$$
defines an $\cO_C$-homomorphism, that is a rational Higgs fields on $E$.
We reprove Proposition \ref{prop:birational}.
\begin{thm}\label{thm:birational}
For generic $(E, \nabla) \in M_X^0$,  
the point $(q_j,  \tilde{p}_j)_{j=1,\ldots,N}  \in  
 \mathrm{Sym}^N(\boldsymbol{\Omega}(D, c_d)) $  determines $ (E, \nabla) $.  So the map $f_{\mathrm{App}}$ is birational.  
\end{thm}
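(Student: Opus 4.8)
The plan is to reconstruct $(E,\nabla)$ from the data $(q_j,\tilde p_j)_{j=1,\dots,N}$ by exploiting the two connections $\nabla$ and $\nabla_1$ constructed above, and then invoke the BNR correspondence of \cite{BNR}. Since $N=4g-3+n$ and $\dim_{\mathbb{C}} M_X^0 = 2N$ (Proposition~\ref{prop:dimension}) while $\dim_{\mathbb{C}}\operatorname{Sym}^N(\boldsymbol{\Omega}(D,c_d)) = 2N$ as well, it suffices to show that $f_{\mathrm{App}}$ admits a rational inverse on a Zariski open subset. The key idea is that the \emph{difference} $\Phi := \nabla - \nabla_1 \colon E \to E\otimes\Omega^1_C(D+D(s))$ is an $\mathcal{O}_C$-linear (rational) Higgs field, so that the pair $(E,\Phi)$ defines a spectral curve inside the total space of $\Omega^1_C(D+D(s))$, and by BNR the bundle $E$ together with $\Phi$ is recovered from this spectral data as a pushforward of a line bundle on the spectral curve.

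First I would express the entries of $\Phi$ in the local frames $(\be_1^{(i)},\be_2^{(i)})$. From the formulae for $A^{(i)}$ and for $S^{(i)}$, the Higgs field $\Phi$ has connection matrix $A^{(i)} - S^{(i)}$, whose $(2,1)$-entry is $a_{21}^{(i)}\gamma_i$ — precisely the section of $H^0(C,\Omega^1_C(D)\otimes L_0)$ whose zero divisor is $q_1+\cdots+q_N$ by~\eqref{eq:app_6}. Thus the apparent singularities $q_j$ record the ramification/vanishing locus of the off-diagonal part of $\Phi$, while the accessory parameters $\tilde p_j = (a_{22}^{(0)}\gamma_0)_{q_j}$ record the relevant eigenvalue of $\Phi$ at each $q_j$ via~\eqref{eq:twist_6}, after the twist by $c_d$ that absorbs $\operatorname{d}\!\log g_{0i}$. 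I would then argue that the characteristic polynomial of $\Phi$ — equivalently the spectral curve — is determined by the $N$ points $(q_j,\tilde p_j)$ together with the fixed spectral data in $X$, exactly as the quadratic expression~\eqref{eq:quantum_characteristic} is pinned down in Proposition~\ref{prop:birational}.

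The reconstruction then proceeds as follows. Given generic $(q_j,\tilde p_j)_j$, the spectral data determine a reduced spectral curve $\pi\colon \tilde C \to C$ of degree $2$ inside $\operatorname{Tot}(\Omega^1_C(D+D(s)))$; on $\tilde C$ the points lying over the $q_j$ single out a line bundle $\tilde L$ via the eigenline decomposition. By the BNR correspondence, $E \cong \pi_* \tilde L$ recovers the vector bundle, and $\Phi$ is recovered as multiplication by the tautological section. Adding back the explicitly constructed $\nabla_1$ (which depends only on $s$ and the fixed data, not on the moduli) yields $\nabla = \Phi + \nabla_1$, and one checks that the resulting $\nabla$ has the prescribed spectral data in $X$ and apparent singularities exactly at the $q_j$ with the correct accessory parameters. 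This produces a rational inverse to $f_{\mathrm{App}}$ on a Zariski open set, whence birationality.

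The main obstacle I anticipate is twofold. First, one must verify that for generic data the spectral curve $\tilde C$ is reduced and irreducible (so that BNR applies and $\tilde L$ is a genuine line bundle rather than a torsion-free sheaf on a singular or reducible curve); this should follow from the genericity of the $(q_j,\tilde p_j)$ and the irreducibility assumption on $(E,\nabla)$ coming from the non-resonance conditions in Definition~\ref{2023_7_14_23_01}, but it requires care at the points of $D$ and $D(s)$ where the Higgs field has poles. Second, and more delicately, one must confirm that the diagonal part $a_{22}^{(0)}\gamma_0$ is correctly reconstructed from $\tilde p_j$ once the twist by $c_d$ is undone — this is where the Atiyah-algebroid twisted cotangent bundle $\boldsymbol{\Omega}(D,c_d)$ enters, and the argument mirrors the linear-algebra reduction at the end of the proof of Lemma~\ref{lem:independence} (the relevant determinant is again~\eqref{eq:determinant}, with $\zeta_j\operatorname{d}\!z_j$ replaced by the eigenvalue datum). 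I expect the non-vanishing of this determinant for generic data — hence genuine invertibility — to be the technical crux, and it is handled exactly as in Lemma~\ref{lem:independence}.
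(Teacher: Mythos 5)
Your proposal follows essentially the same route as the paper's proof: form the rational Higgs field $\Phi=\nabla-\nabla_1$, observe that the data $(q_j,\tilde p_j)_j$ together with the fixed spectral data at $D$ and the prescribed residue structure at $D(s)$ pin down the spectral curve in $\operatorname{Tot}(\Omega^1_C(D+D(s)))$ (with the independence of these linear conditions checked by the method of Lemma~\ref{lem:independence}), recover $(E,\Phi)$ via BNR from a line bundle on that curve, and set $\nabla=\Phi+\nabla_1$. The only point the paper makes more explicit is the dimension count ($10g-7+3n$ conditions matching the dimension of the family of spectral curves) and the fact that the divisor defining the line bundle on $C_s$ includes not only the points over the $q_j$ but also the distinguished points over the zeros $u_k$ of $s$; these are details of, not departures from, your argument.
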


\begin{proof}
Consider the Higgs field 
$$
\Phi=\Phi_{\nabla}  = \nabla -  \nabla_1
\colon E  \lra  E \otimes \Omega^1_C(D + D(s))
$$
where $D = t_1 + \cdots  + t_n $  and $D(s) =  u_1 +  \cdots + u_{2g-1}$  as in the notation 
above.   We assume that the set of apparent singularities  
$q_1, \ldots, q_N$ of $(E, \nabla)$ is disjoint 
from  $D$  and $D(s)$.  We will consider the characteristic curve of $\Phi$.  
On $U_i$, we have 
$$
\Phi_{i}  =  A^{(i)} -  S^{(i)}  =  \begin{pmatrix}
\tilde{a}_{11}  &  \tilde{a}_{12} + \frac{\beta_i}{s^{(i)}}  \\
\tilde{a}_{21}   & \tilde{a}_{22} + \frac{\operatorname{d}\!s^{(i)}}{s^{(i)}}
\end{pmatrix}.
$$
The characteristic curve  $C_s$ can be defined in the total space of $\boldsymbol{\Omega}(D+D(s))$ of the line bundle  $\Omega^1_C(D+D(s))$ by 
$$
C_s:  x^2  -  b_1  x  -  b_2  = 0    
$$
with  $b_i  \in H^0(C,  (\Omega^1_C(D+D(s)))^{\otimes i})$, and $x$ the canonical section. 
The dimension of the family of spectral curves is thus given by 
\begin{eqnarray*}
\dim H^0( C, \Omega^1_C(D+D(s))) +   \dim H^0( C, (\Omega^1_C(D+D(s)))^{\otimes 2})
&= & N + 1-g  +  2N + 1-g  \\
&=  &  3N +2-2g = 3(4g-3+n) +2-2g \\
&= & 10g - 7 + 3n.   
\end{eqnarray*}
Then $\Phi$ is constrained by the following conditions.
\begin{enumerate}
\item At $t_i, i=1, \ldots, n$, $\Phi$ has eigenvalues fixed by data $X$. 
These impose $2n -1$ conditions because of the Fuchs relation.
\item At $u_k$,  $k=1, \ldots,2g-1$,  take a local coordinate $z_k  $ such that 
$z_k(u_k)=0$.  Then 
$\Phi$  has the following form near $z_k=0$
$$
\Phi  =  \begin{pmatrix}
0  &   \frac{\beta_i(0)}{z_k}  \\
0  &   \frac{\operatorname{d}\!z_k}{z_k} 
\end{pmatrix} +  \mbox{holomorphic}.
$$
Then eigenvalues of the residue matrix are $0,  1$ and the $\beta_i (0)$ 
gives a restriction on $C_s$.  Then totally we have $3 \times (2g-1)$
conditions.
\item  At $q_j,  j=1, \ldots, N$,  
the points $\tilde{a}_{22}(q_j) + \frac{ds^{(i)}}{s^{(i)}}(q_j) 
=  \tilde{p}_j  +  c_j  \in \boldsymbol{\Omega}(D+D(s))$  lie on 
the characteristic curve $C_s$.  
These give $N=4g-3+n$ conditions. 
\end{enumerate}

For generic choice of $q_1, \cdots, q_N$  and $s  \in H^0(C, L_0)$, we can see using the method of Lemma~\ref{lem:independence} and  Proposition~\ref{prop:birational}
that these conditions are independent, so we obtain a 
total of  
$$
2n-1 + 3(2g-1) + (4g-3 + n) = 10g-7 + 3n
$$
conditions, so these determine the spectral curve $C_s$.
Now the divisor  $ \mu=  \sum_{j=1}^N ( \tilde{p}_j-c_j)  +  
\sum_{k=1}^{2g-1} (1_k)$ determines the rank 1 sheaf 
 $\cO_{C_s}(\mu)$  
where $ (1_k) \in C_s $ denotes the point over $u_k$ corresponding to the eigenvalue $-1$ of the residue of $\Phi$ at $u_k$.   
Then $(\pi\colon C_s \lra C,  \cO_{C_s}(\mu) )$ determines $(E,  \Phi)$ 
uniquely by \cite[Proposition 3.6]{BNR}.  Hence  $E$ and 
$\nabla = \Phi + \nabla_1$ is determined uniquely.
\end{proof}

\end{document}